\tikzset{
  commutative diagrams/.cd, 
  arrow style=tikz, 
  diagrams={>=stealth}
}
\tikzset{
  arrow/.pic={\path[tips,every arrow/.try,->,>=#1] (0,0) -- +(0,4pt);},
  pics/arrow/.default={triangle 90}
}
\tikzset{->-/.style={decoration={
  markings,
  mark=at position .6 with {\arrow{latex}}},postaction={decorate}}
  }
\tikzset{
  c/.style={every coordinate/.try}
}
\theoremstyle{definition}
\newenvironment{customthm}[1]
  {\innercustomthm}
  {\endinnercustomthm}
\theoremstyle{definition}
\newenvironment{customcor}[1]
  {\innercustomcor}
  {\endinnercustomcor}
  \theoremstyle{definition}
\newenvironment{customprop}[1]
  {\innercustomprop}
  {\endinnercustomprop}
\def\@tocline#1#2#3#4#5#6#7{\relax
  \ifnum #1>\c@tocdepth % then omit
  \else
    \par \addpenalty\@secpenalty\addvspace{#2}%
    \begingroup \hyphenpenalty\@M
    \@ifempty{#4}{%
      \@tempdima\csname r@tocindent\number#1\endcsname\relax
    }{%
      \@tempdima#4\relax
    }%
    \parindent\z@ \leftskip#3\relax \advance\leftskip\@tempdima\relax
    \rightskip\@pnumwidth plus4em \parfillskip-\@pnumwidth
    #5\leavevmode\hskip-\@tempdima
      \ifcase #1
       \or\or \hskip 1em \or \hskip 2em \else \hskip 3em \fi%
      #6\nobreak\relax
    \dotfill\hbox to\@pnumwidth{\@tocpagenum{#7}}\par
    \nobreak
    \endgroup
  \fi}
\newcounter{marginnote}
\DeclareMathAlphabet{\mathpzc}{OT1}{pzc}{m}{it}
\pgfplotsset{compat=1.18}
\theoremstyle{definition}
\newtheorem{theorem}{Theorem}[section]
\newtheorem*{claim*}{Claim}
\newtheorem{corollary}[theorem]{Corollary}
\newtheorem{lemma}[theorem]{Lemma}
\newtheorem{proposition}[theorem]{Proposition}
\newtheorem{remark}[theorem]{Remark}
\newtheorem*{runningexample*}{Running example}
\newtheorem*{aside*}{Aside}
\newtheorem{definition}[theorem]{Definition}
\newtheorem{example}[theorem]{Example}
\newtheorem{notation}[theorem]{Notation}
\newtheorem*{notation*}{Notation} %added by me
\newtheorem{proposition-definition}[theorem]{Proposition-Definition}
\newtheorem{theorem-definition}[theorem]{Theorem-Definition} %added for MSc Diss
\newtheorem*{question*}{Question}
\newcommand{\bcd}{\begin{center}\begin{tikzcd}}
\newcommand{\ecd}{\end{tikzcd}\end{center}}
\newcommand{\calC}{\mathcal{C}}
\newcommand{\calQ}{\mathcal{Q}}
\newcommand{\calP}{\mathcal{P}} %added for MSc Diss
\newcommand{\calR}{\mathcal{R}} %added for MSc Diss
\newcommand{\raag}{A_\Gamma} %RAAG
\newcommand{\sal}{\mathbb{S}_{\Gamma}} %Salvetti complex
\newcommand{\uag}{U(A_{\Gamma})} %untwisted subgroup of outer auto gp of a RAAG
\newcommand{\oag}{\operatorname{Out}(A_{\Gamma})} %outer automorphism group of a RAAG
\newcommand{\vcduag}{\textsc{vcd}(U(A_{\Gamma}))} %vcd of untwisted subgp
\newcommand{\link}[1]{\operatorname{lk}({#1})} %link of a vertex
\newcommand{\str}[1]{\operatorname{st}({#1})} %star of a vertex
\newcommand{\splt}[1]{\operatorname{split}({#1})} 
\newcommand{\mx}[1]{\operatorname{max}({#1})}
\newcommand{\salb}[1]{\mathbb{S}_\Gamma^{#1}} %blowup of a Salvetti - changed to this version for the symmetric spine paper
\newcommand{\spine}{K_{\Gamma}} %spine of untwisted Outer space
\newcommand{\os}{\mathcal{O}_\Gamma} %Outer space for a RAAG
\newcommand{\res}{\widehat{K_\Gamma}} %resultant complex after equivariant deformation retraction of untwisted spine
\newcommand{\ssslash}{\smash{\sslash}} %smashes the sslash operator to remove excess space
\begin{document} 
\subjclass[2020]{
    20F65, %Geometric group theory
                20F28, %Automorphism groups of groups
                20F36. %Braid groups; Artin groups
}
\keywords{Right-angled Artin groups, virtual cohomological dimension, automorphism groups.}

\title{Realising VCD for untwisted automorphism groups of RAAGs}

\author{Gabriel Corrigan}
\address{School of Mathematics and Statistics, University of Glasgow, Glasgow, G12 8QQ, UK}
\email{g.corrigan.1@research.gla.ac.uk}
\urladdr{https://www.gabrielcorrigan.com}

\begin{abstract}
    The virtual cohomological dimension of~$\operatorname{Out}(F_n)$ is given precisely by the dimension of the spine of Culler--Vogtmann Outer space. However, the dimension of the spine of untwisted Outer space for a general right-angled Artin group~$A_\Gamma$ does not necessarily match the virtual cohomological dimension~$\textsc{vcd}(U(A_{\Gamma}))$ of the untwisted subgroup~$U(A_\Gamma) \leq \operatorname{Out}(A_\Gamma)$. Under certain graph-theoretic conditions, we perform an equivariant deformation retraction of this spine to produce a new contractible cube complex upon which~$U(A_\Gamma)$ acts properly and cocompactly. Furthermore, we give conditions for when the dimension of this complex realises the virtual cohomological dimension of~$U(A_\Gamma)$. We finish with two applications of our construction; in particular we show that the difference between the dimension of the untwisted spine and~$\textsc{vcd}(U(A_{\Gamma}))$ can be arbitrarily large.
\end{abstract}

\maketitle

\tableofcontents

\section{Introduction}\label{sec:intro}

\subsection{Overview of results}\label{subsec:overview of results}

Given a finite simplicial graph~$\Gamma$, define the associated \emph{right-angled Artin group}~$\raag$ by the presentation
\[\raag \coloneqq \left\langle V(\Gamma) \;\;\vert\;\; [a, b] = 1 \; \text{if} \; \{a, b\} \in E(\Gamma) \right\rangle.\]
Hence, when~$\Gamma$ has no edges, the corresponding right-angled Artin group is simply the free group of rank~$|V(\Gamma)|$, and when~$\Gamma$ is a complete graph,~$\raag \cong \smash{\mathbb{Z}^{|V(\Gamma)|}}$.

The automorphism groups of right-angled Artin groups are well-studied. We will be interested in the so-called \emph{untwisted subgroup}~$U(A_\Gamma) \leq \oag$ of outer automorphisms, which is obtained by excluding certain automorphisms called \emph{twists}. 

For any right-angled Artin group~$\raag$, Charney--Stambaugh--Vogtmann \cite{CharneyStambaughVogtmannUntwistedOuterSpace17} constructed an \emph{untwisted Outer space}~$\mathcal{O}_\Gamma^U$, which is a contractible complex with a proper~$\uag$-action. This is a generalisation of the influential \emph{Culler--Vogtmann Outer space}~$CV_n$, introduced in 1986 \cite{CullerVogtmannOuterSpace86}. Untwisted Outer space has a \emph{spine}~$\spine$ -- a deformation retract  of~$\mathcal{O}_\Gamma^U$ which naturally has the structure of a cube complex and which has a proper and cocompact~$\uag$-action. This implies that~$\dim\left(\spine\right)$ is an upper bound for the virtual cohomological dimension (\textsc{vcd}) of~$\uag$.

In many cases, equality holds. Millard and Vogtmann \cite{MillardVogtmannCubeComplexes21} show that the \emph{principal rank} (see \textit{Definition \ref{def:principal rank}}) of~$\Gamma$ is a lower bound for~$\vcduag$. However, there exist simplicial graphs~$\Gamma$ for which the principal rank is strictly less than~$\dim(\spine)$. For example, the \emph{2-rake} graph~$T_2$ shown in \textit{Figure \ref{fig:2-rake (intro)}} has principal rank equal to 5 but~$\dim(K_{T_2}) = 6$.

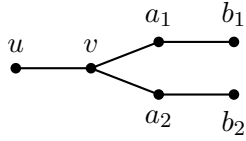
\begin{figure}[H]
    \begin{center}
        \begin{tikzpicture}
            \fill (-1, 0) circle (2pt);
            \fill (0, 0) circle (2pt);
            \fill (0.9, 0.35) circle (2pt);
            \fill (0.9, -0.35) circle (2pt);
            \fill (1.9, 0.35) circle (2pt);
            \fill (1.9, -0.35) circle (2pt);
            \draw[black, thick] (-1, 0) -- (0, 0);
            \draw[black, thick] (0, 0) -- (0.9, 0.35);
            \draw[black, thick] (0, 0) -- (0.9, -0.35);
            \draw[black, thick] (0.9, 0.35) -- (1.9, 0.35);
            \draw[black, thick] (0.9, -0.35) -- (1.9, -0.35);
            \node at (-1, 0) [above = 2pt, black, thick]{$u$};
            \node at (0, 0) [above = 2pt, black, thick]{$v$};
            \node at (0.9, 0.35) [above = 2pt, black, thick]{$a_1$};
            \node at (0.9, -0.35) [below = 2pt, black, thick]{$a_2$};
            \node at (1.9, 0.35) [above = 2pt, black, thick]{$b_1$};
            \node at (1.9, -0.35) [below = 2pt, black, thick]{$b_2$};
        \end{tikzpicture}
        \caption{The 2-rake~$T_2$.}
        \label{fig:2-rake (intro)}
    \end{center}
\end{figure}

Under certain graph-theoretic conditions we call \emph{spiky} and \emph{barbed} (defined in \textit{Definitions \ref{def:spiky}} \& \textit{\ref{def:barbed}}), we determine~$\vcduag$ geometrically.

\begin{customthm}{A}[\ref{cor:conditions 1 & 2 + barbed implies vcd = M(L)}]\label{customthm:main thm}
    Suppose that~$\Gamma$ is spiky and barbed. Then~$\vcduag$ is equal to the principal rank of~$\Gamma$. Moreover, there is a~$\uag$-complex realising this virtual cohomological dimension.
\end{customthm}

We also present a collection of infinite families of graphs generalising the 2-rake graph in \emph{Figure \ref{fig:2-rake (intro)}}. We show that these graphs satisfy our conditions, and determine the values of the principal rank and the dimension of the untwisted spine~$\spine$ for these graphs. These calculations, combined with \emph{Theorem \ref{customthm:main thm}}, yield the following.

\begin{customcor}{B}[\ref{cor:arbitrarily large gaps}]\label{customcor:arbitrary gaps}
    The difference between~$\dim(\spine)$ and~$\vcduag$ can be arbitrarily large.
\end{customcor}

We also give another application of \emph{Theorem \ref{customthm:main thm}}, by applying its proof to the following graph~$\Delta$:

\begin{figure}[H]
    \begin{center}
        \begin{tikzpicture}
            \fill (-1, 0) circle (2pt); %u1
            \fill (-1, -1) circle (2pt); %u2
            \fill (0, 0) circle (2pt); %v1
            \fill (0, -1) circle (2pt); %v2
            \fill (0.9, 0.5) circle (2pt); %a1
            \fill (0.9, -0.5) circle (2pt); %a2
            \fill (0.9, -1.5) circle (2pt); %a3
            \fill (1.9, 0.5) circle (2pt); %b1
            \fill (1.9, -0.5) circle (2pt); %b2
            \fill (1.9, -1.5) circle (2pt); %b3
            \draw[black, thick] (-1, 0) -- (0, 0); %u1 to v1
            \draw[black, thick] (-1, -1) -- (0, -1); %u2 to v2
            \draw[black, thick] (0, 0) -- (0.9, 0.5); %v1 to a1
            \draw[black, thick] (0, 0) -- (0.9, -0.5); %v1 to a2
            \draw[black, thick] (0, -1) -- (0.9, -0.5); %v2 to a2
            \draw[black, thick] (0, -1) -- (0.9, -1.5); %v2 to a3
            \draw[black, thick] (0.9, 0.5) -- (1.9, 0.5); %a1 to b1
            \draw[black, thick] (0.9, -0.5) -- (1.9, -0.5); %a2 to b2
            \draw[black, thick] (0.9, -1.5) -- (1.9, -1.5); %a3 to b3
            \node at (-1, 0) [above = 2pt, black, thick]{$u_1$};
            \node at (0, 0) [above = 2pt, black, thick]{$v_1$};
            \node at (-1, -1) [above = 2pt, black, thick]{$u_2$};
            \node at (0, -1) [above = 2pt, black, thick]{$v_2$};
            \node at (0.9, 0.5) [above = 2pt, black, thick]{$a_1$};
            \node at (0.9, -0.5) [below = 2pt, black, thick]{$a_2$};
            \node at (0.9, -1.5) [below = 2pt, black, thick]{$a_3$};
            \node at (1.9, 0.5) [above = 2pt, black, thick]{$b_1$};
            \node at (1.9, -0.5) [below = 0pt, black, thick]{$b_2$};
            \node at (1.9, -1.5) [below = 0pt, black, thick]{$b_3$};
        \end{tikzpicture}
    \end{center}
    \caption{The graph~$\Delta$.}
    \label{fig:delta (intro)}
\end{figure}
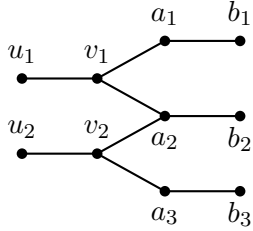

In \cite{MillardVogtmannCubeComplexes21}, Millard and Vogtmann give a sufficient condition for the principal rank to be equal to~$\dim(\spine)$ (and hence for~$\vcduag = \dim(\spine)$). This graph-theoretic condition roughly says that all the vertices of~$\Gamma$ which `dominate' another vertex~$u$ lie in the same component of~$\Gamma \setminus \str{u}$ (here the \emph{star} of~$u$,~$\str{u}$, is the full subgraph on~$u$ and all its neighbours; by `dominate' we refer to the relation~$<_\circ$ -- see \S\ref{subsec:graph-theoretic preliminaries}). One may wonder whether this hypothesis can be weakened to give a sufficient condition guaranteeing that the gap between~$\vcduag$ and~$\dim\left(\spine\right)$ is at most~$k \geq 1$. Arguably the most natural guess for such a condition is the following one, which we denote~$P(k)$: the vertices `dominating'~$u$ must be divided between at most~$(k+1)$ components of~$\Gamma \setminus \str{u}$. Indeed, our prototypical examples of graphs which satisfy~$P(k)$ (see \S\ref{subsec:rakes}) indeed have~$\dim\left(\spine\right) - \vcduag$ equal to exactly~$k$. However, we dispel this speculation by applying (the proof of) \emph{Theorem \ref{customthm:main thm}} to~$\Delta$, to obtain:

\begin{customcor}{C}[\ref{cor:counterexample to suff condition hoped-theorem}]\label{customcor:counterexample}
    There is a graph~$\Delta$ which satisfies~$P(1)$, and has~$\dim(K_\Delta) - \textsc{vcd}(U(A_\Delta)) > 1$.
\end{customcor}

\subsection{Strategy of proofs}\label{subsec:strategy of proof}

The spine~$\spine$ may be viewed as a geometric realisation of a poset of \emph{compatible} sets of so-called \emph{$\Gamma$-Whitehead partitions} (see \textit{Definitions \ref{def:Gamma-Whitehead partition}} \& \textit{\ref{def:compatibility}}) along with some marking data. In particular,~$\dim(\spine)$ is the size of the largest possible such set. The principal rank is also computed using~$\Gamma$-Whitehead partitions.

We introduce the idea of \emph{1-} and \emph{2-hugged}~$\Gamma$-partitions (see \textit{Definition \ref{def:2-hugged in Pi}}), which roughly captures a notion of `redundancy' among the cubes of~$\spine$.  Most of the work of this paper goes into the following pair of statements.

\begin{customthm}{D}[\S\ref{sec:retraction process}, \ref{thm:barbed implies cubes of dimension higher than principal rank are redundant}]\label{customthm:main work}
    \begin{enumerate}[(i)]
        \item Consider the subcomplex~$\res$ of~$\spine$ consisting only of cubes which are not redundant. If~$\Gamma$ is spiky, then~$\smash{\res}$ can be obtained as a~$\uag$-equivariant deformation retraction of~$\spine$. 

        \item Suppose that the principal rank of~$\Gamma$ is strictly less than~$\dim(\spine)$. If~$\Gamma$ is barbed, then every cube in~$\spine$ of dimension greater than the principal rank is redundant.
    \end{enumerate}
\end{customthm}

Combining these statements, we obtain that if~$\Gamma$ is both spiky and barbed, then~$\res$ is a contractible complex with a proper and cocompact~$\uag$-action and is of dimension at most the principal rank of~$\Gamma$. This yields \textit{Theorem \ref{customthm:main thm}}.

Additionally, we use a good understanding of the~$\Gamma$-Whitehead partitions of \emph{rake} graphs to compute their principal rank and the dimension of their untwisted spine:
\begin{customprop}{E}[\ref{prop:M(L), M(V) for rake graphs}]\label{customprop:dim(spine) and M(L) for rake graphs}
    The~$d$-rake graph~$T_d$ (see Figure \ref{fig:the d-rake}) has principal rank~$3d-1$. The dimension of the spine~$K_{T_d}$ is~$4d-2$.
\end{customprop}

The rake graphs are both spiky and barbed, so \textit{Theorem \ref{customthm:main thm}} applies. Hence, we find that the virtual cohomological dimension~$\textsc{vcd}(U(A_{T_d}))$ is equal to~$3d-1$, which gives \textit{Corollary \ref{customcor:arbitrary gaps}}. Certain derivatives of `rake' graphs also exhibit this behaviour. Similarly, we use a good understanding of the possible~$\Delta$-Whitehead partitions to obtain \textit{Corollary \ref{customcor:counterexample}}.

\subsection{Context}\label{subsec:overview of context}

The untwisted subgroup~$\uag \leq \oag$ is obtained by excluding \emph{twists}. Twists are automorphisms which multiply a generator by another generator with which it commutes, so the untwisted subgroup is most closely related to the `free group end' of the spectrum of right-angled Artin groups -- where there are no commuting generators. In particular,~$U(F_n)$ is the entire group of outer automorphisms~$\operatorname{Out}(F_n)$. The abelianisation~$\raag \to \mathbb{Z}^n$ induces a map~$\oag \to \operatorname{GL}_n(\mathbb{Z})$ whose kernel is always contained in~$\uag$ (this kernel was studied in more detail by Toinet,  \cite{ToinetConjugacySeparabilityRAAGs13}).

The spine for the free group~$F_n$ has dimension~$2n-3$, and it is easy to find a free abelian subgroup of~$\operatorname{Out}(F_n)$ of rank~$2n-3$. Hence, the virtual cohomological dimension of~$\operatorname{Out}(F_n)$ is precisely the dimension of the spine (this was observed in \cite{CullerVogtmannOuterSpace86}). We refer to Vogtmann's surveys \cite{VogtmannOuterSpaceSurvey02}, \cite{VogtmannOuterSpaceSurvey18}, and \cite{CroweHelmsVogtmannOuterSpaceSurvey25} for detailed descriptions of Culler--Vogtmann Outer space and its spine, as well as their applications. 

\begin{remark}\label{rem:dichotomy about failure of bounds to match}
    Building on the untwisted Outer space constructed in \cite{CharneyStambaughVogtmannUntwistedOuterSpace17}, Bregman--Charney--Vogtmann \cite{BregmanCharneyVogtmannOuterSpaceRAAGs23} constructed a full \emph{Outer space}~$\os$ -- a contractible space with a proper~$\oag$-action.
\end{remark}

For those~$\Gamma$ for which the principal rank is strictly less than~$\dim(\spine)$, we must have at least one of the following departures from the free group case:
\begin{itemize}
    \item~$\vcduag$ is not realised by the rank of any free abelian subgroup (or there is such a free abelian subgroup, but it is in some sense unexpected or pathological);
    \item~$\vcduag$ is not realised geometrically as~$\dim(\spine)$.
\end{itemize}

Regarding the first of these possibilities, it is proved in \cite{CharneyVogtmannSubgpsQuotientsRAAGs11} that for many graphs,~$\uag$ cannot contain a torsion-free non-abelian solvable subgroup. However, there still exist graphs for which we cannot disregard such a possibility (compare with~$\textsc{vcd}(\operatorname{GL}_n(\mathbb{Z}))$, which is realised as the Hirsch rank of a (non-abelian) polycyclic subgroup (see \cite[Theorem 7.10 \& \S7.4]{BieriHomologicalDimensionDiscreteGroupsBook81})). As yet, we know of no example where~$\vcduag$ is not realised as the rank of a (natural, easily found) free abelian subgroup (Millard--Vogtmann's proof \cite{MillardVogtmannCubeComplexes21} that~$M(L) \leq \vcduag$ proceeds by finding a free abelian subgroup of rank~$M(L)$, generated in a reasonably anticapable way.) 

Millard--Vogtmann also consider the second problem. They proved that if there is a gap between the principal rank and~$\dim(\spine)$, and if~$\Gamma$ is barbed, then one can equivariantly retract all the top-dimensional cubes of~$\spine$, tightening the upper bound on~$\vcduag$ by one. Our work is inspired by this strategy and both replaces and extends it.

\subsection{Further directions}\label{subsec:further work}

There are certain obvious questions regarding the approach taken here. It is not known to what extent the conditions spiky and barbed are simply artefacts of the proof. In \S\ref{subsec:key lemmas} and \S\ref{sec:sufficient condition for M(L) = M(V)}, we discuss examples of graphs that violate spikiness in different ways. For one of these examples (\textit{Figure \ref{fig:delta}}), our results can nevertheless be applied with no modification. However, for another (\textit{Eg.~\ref{eg:graph that breaks Condition 1 and the ensuing lemma}}), the retraction process cannot be executed in its current form. This requires further investigation -- for example, the notion of redundancy used in this paper can be tweaked to be applied to the graph in \emph{Eg. \ref{eg:graph that breaks Condition 1 and the ensuing lemma}}, but this comes at the cost of a less streamlined definition and a less intuitive (if slightly weaker) version of our `spikiness' condition.

There are also natural questions to be asked about our resultant complex~$\res$. For example, is~$\dim(\res)$ always equal to~$\vcduag$? At present, we know of no example to the contrary. In another direction, Bridson--Vogtmann \cite{BridsonVogtmannSymsOuterSpace01} proved that~$\operatorname{Out}(F_n)$ is the group of simplicial automorphisms of the corresponding spine (for~$n \geq 3$). We may ask the same question here: when is the group of simplicial automorphisms of~$\spine$ isomorphic to~$\uag$? What about the group of automorphisms of~$\res$? One could phrase this as: when, if ever, is~$\spine$ or~$\res$ an accurate model for~$\uag$?

Finally, we remark that there already exists an algorithm, due to Day--Sale--Wade \cite{DaySaleWadeVCDAlgorithm21}, which computes the virtual cohomological dimension of~$\oag$ and~$\uag$ for any graph~$\Gamma$. This algorithm is difficult to apply in practice and does not provide a complex whose dimension realises the virtual cohomological dimension. One interesting further research direction would be to compare and contrast these two approaches for computing~$\vcduag$. For example, is there any transparent way of translating the value of~$\vcduag$ calculated in \cite{DaySaleWadeVCDAlgorithm21} into the size of a compatible set of~$\Gamma$-Whitehead partitions? If so, does this set resemble the top-dimensional cubes in~$\res$ in any way?

More concretely, we propose the following question as a good departure point. 
\begin{question*}
    Is~$\vcduag$ always equal to the principal rank of~$\Gamma$?
\end{question*}
The results in this paper give a positive answer to this question for spiky, barbed~$\Gamma$. Strengthening the approach taken here would therefore be one way of looking for a positive answer in general. On the other hand, one may search for counterexamples by applying Day--Sale--Wade's algorithm to graphs which fail to be simultaneously spiky and barbed.

\subsection*{Organisation}

In \S\ref{sec:auts of raags and CV Outer space}, we establish notation and gather the necessary preliminaries. \S\ref{sec:untwisted spine} introduces the spine of untwisted Outer space and discusses the work of \cite{MillardVogtmannCubeComplexes21} regarding the principal rank. In \S\ref{sec:retraction process}, we define `spikiness' and detail the retraction process which obtains~$\res$ from~$\spine$, proving \textit{Theorem \ref{customthm:main work} (i)}. \S\ref{sec:realising vcd} is dedicated to proving \textit{Theorem \ref{customthm:main work} (ii)} and deducing \textit{Theorem \ref{customthm:main thm}}. Finally, in \S\ref{sec:arbitrarily large gaps} and \S\ref{sec:sufficient condition for M(L) = M(V)}, we present detailed examples and computations, deducing \textit{Corollaries \ref{customcor:arbitrary gaps}} \& \textit{\ref{customcor:counterexample}}.

\subsection*{Acknowledgements}

This work grew out of my Master's thesis, completed under the kind and patient supervision of Karen Vogtmann. I would especially like to thank Prof. Vogtmann for the time she has dedicated to me and this project, as well as for introducing me to this special topic. In particular, I am grateful for her clarification regarding \emph{Definition \ref{def:spiky}}.

I would also like to thank my doctoral supervisors, Rachael Boyd and Tara Brendle, for their guidance, patience, and trust, as well as helpful discussions. Thanks are also due to Corey Bregman for taking the time to read through an earlier draft of this paper and for his helpful comments. Finally, I would like to express my gratitude to the referee for their careful reading and many helpful suggestions for improvement.

\section{Automorphisms of RAAGs and Culler--Vogtmann Outer space}\label{sec:auts of raags and CV Outer space}

In this section we provide the necessary preliminaries on right-angled Artin groups and their automorphisms, and on virtual cohomological dimension. There is also a brief discussion of Culler--Vogtmann Outer space.

\subsection{Right-angled Artin groups}\label{subsec:raags}

\begin{definition}
    Let~$\Gamma$ be a finite simplicial graph (that is, it has no single-edge loops and no multi-edges) whose vertices are labelled. Write~$V(\Gamma)$ and~$E(\Gamma)$ for its vertex and edge sets, respectively. The associated \emph{right-angled Artin group} (RAAG), denoted~$\raag$, is defined by the presentation \[\raag = \langle V(\Gamma) \;\;\vert\;\; [a, b] = 1 \;\text{if}\; \{a, b\} \in E(\Gamma)\rangle.\]

    Say that~$\Gamma$ is the \emph{defining graph} for~$\raag$. We will blur the distinction between a vertex of~$\Gamma$ and the corresponding generator of~$\raag$, and will communicate the data of a RAAG simply by its defining graph.
\end{definition}

Every Artin group has an associated cell complex called the \emph{Salvetti complex} \cite{SalvettiTopologyHyperplaneComplements87}. For a RAAG~$\raag$, the Salvetti complex~$\sal$ is a cube complex with a particularly simple description, as follows. There is one 0-cell, to which is attached a directed 1-cell for each generator. Then for each edge~$\{a, b\}$ in~$\Gamma$, we glue in a 2-torus along the corresponding commutator~$aba^{-1}b^{-1}$. Continuing inductively, for each~$k$-clique in~$\Gamma$ (which corresponds to a set of~$k$ mutually commuting generators), we glue in a~$k$-torus whose constituent~$(k-1)$-tori correspond to the subcliques of our~$k$-clique which have~$k-1$ vertices. 

For example, the Salvetti complex of the free group of rank~$n$ is simply the~$n$-petalled rose graph, while the Salvetti complex of~$\mathbb{Z}^n$ is an~$n$-torus. The Salvetti complex of a RAAG associated to a triangle-free graph is the presentation complex of the RAAG. Note that~$\pi_1(\sal) \cong \raag$. Charney and Davis proved \cite{CharneyDavisKpi195} that~$\sal$ is a~$K(\raag, 1)$ space, resolving the~$K(\pi, 1)$ conjecture for RAAGs; it follows that RAAGs are torsion-free and biautomatic (as shown by Niblo and Reeves \cite{NibloReevesCCsComplexity98}), for instance. 

We refer the reader to Charney's introduction to RAAGs for further details \cite{CharneyRAAGsSurvey07}. 

\subsection{Graph-theoretic preliminaries}\label{subsec:graph-theoretic preliminaries}

In this section, we collect some notation and terminology regarding defining graphs which we shall use throughout. For more details and proofs, we refer to \cite{CharneyVogtmannFinitenessProperties09}.

\begin{definition}\label{def:link and star}
    For~$\Gamma$ a simplicial graph, the \emph{link}~$\link{v}$ of a vertex~$v \in V(\Gamma)$ is the full subgraph spanned by the vertices adjacent to~$v$. The \emph{star} of~$v$ is the full subgraph spanned by~$v$ and all vertices adjacent to it; denote it~$\str{v}$.
\end{definition}

%To avoid cluttered notation, we will usually confuse the subgraph~$\link{v}$ and its set of vertices~$V(\link{v})$; the same goes for~$\str{v}$.

Define a relation~$\leq$ on the vertices of~$\Gamma$ by saying that~$v \leq w$ if~$\link{v}$ is contained in~$\str{w}$. We define an equivalence relation~$\sim$ on~$V(\Gamma)$ by saying that~$v \sim w$ if both~$v \leq w$ and~$w \leq v$. Then~$\leq$ is a partial order on the set of equivalence classes. We write~$[v]$ for the equivalence class of~$v$. 

A vertex~$v$ is said to be \emph{maximal} if its equivalence class is maximal with respect to this partial order, i.e., for all~$v' \in [v]$, there is no vertex~$w \notin [v]$ with~$v' \leq w$.

One way to have~$v \leq w$ is if~$\link{v} \subseteq \link{w}$; in this case we write~$v \leq_\circ w$. We write~$v <_\circ w$ if there is a strict containment~$\link{v} \subsetneq \link{w}$; in this case we say that~$w$ \emph{dominates}~$v$. 

\begin{definition}\label{def:principal vertex}
    We say that a vertex~$v \in V(\Gamma)$ is \emph{principal} if there is no~$w \in V(\Gamma)$ with~$v <_\circ w$. Otherwise, we say that~$v$ is \emph{non-principal}.
\end{definition}

All maximal vertices are principal. However, as observed in \cite{MillardVogtmannCubeComplexes21}, not all principal vertices are maximal. For example, in the triangle with leaves at two of its vertices, the third vertex of the triangle is principal but not maximal (see \emph{Figure \ref{fig:example of principal but not maximal}}).

\begin{figure}[H]
    \begin{center}
        \begin{tikzpicture}
            \fill (-1.5, 0) circle (2pt);
            \fill (-0.5, 0) circle (2pt);
            \fill (0.5, 0) circle (2pt);
            \fill (1.5, 0) circle (2pt);
            \fill (0, 0.8) circle (2pt);
            \draw[black, thick] (-1.5, 0) -- (-0.5, 0);
            \draw[black, thick] (-0.5, 0) -- (0.5, 0);
            \draw[black, thick] (0.5, 0) -- (1.5, 0);
            \draw[black, thick] (-0.5, 0) -- (0, 0.8);
            \draw[black, thick] (0.5, 0) -- (0, 0.8);
            \node at (0, 0.8) [above = 2pt, black, thick]{$v$};
        \end{tikzpicture}
        \caption{$v$ is a principal vertex but is not maximal.}
        \label{fig:example of principal but not maximal}
    \end{center}
\end{figure}
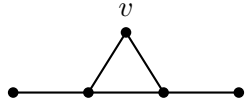

\subsection{Automorphisms of RAAGs}\label{subsec:auts of raags}

Confirming a conjecture of H.~Servatius \cite{ServatiusAutRAAGs89}, Laurence proved that~$\operatorname{Aut}(\raag)$ has the following set of generators (and hence~$\oag$ is generated by their images).

\begin{theorem-definition}[\cite{LaurenceAutRAAGs95}, \emph{Theorem 6.9}]\label{thmdef:generators for Aut(RAAG)}
    For any~$\Gamma$,~$\operatorname{Aut}(\raag)$ is generated by the following set of \emph{elementary transformations}.
    \begin{enumerate}
        \item \emph{Graph automorphisms}: an automorphism of~$\Gamma$ permutes the generators of~$\raag$ and induces an element of~$\operatorname{Aut}(\raag)$.
        \item \emph{Inversions}: a map which sends a generator~$a \mapsto a^{-1}$ and fixes all other generators.
        \item \emph{Transvections}: for two vertices~$v, w$ of~$\Gamma$ with~$v \leq w$, the map which sends~$v \mapsto vw$ or~$wv$ and fixes all other generators is an automorphism. These are split into two types:
        \begin{itemize}
            \item \emph{twists}, if~$v$ and~$w$ are adjacent (in which case~$vw = wv$);
            \item \emph{folds}, if~$v, w$ are not adjacent. In particular,~$v \mapsto vw$ is a \emph{right fold}, while~$v \mapsto wv$ is a \emph{left fold}.
        \end{itemize}
        \item \emph{Partial conjugations}: for a vertex~$v$, let~$C$ be a connected component of~$\Gamma \setminus \str{v}$. The map which sends every generator~$c \in C$ to the conjugate~$vcv^{-1}$ and fixes all other generators is called a \emph{partial conjugation} (Laurence and Servatius refer to these as \emph{locally inner} automorphisms).  
    \end{enumerate}
\end{theorem-definition}

When talking about generators of~$\oag$ and its subgroups, we will use the same terminology: e.g., by a `partial conjugation' in~$\oag$ we mean the image of a partial conjugation under the quotient~$\operatorname{Aut}(\raag) \twoheadrightarrow \oag$.

In this paper we restrict attention to the so-called \emph{untwisted subgroup}~$\uag \leq \oag$, which is generated by all graph automorphisms, inversions, partial conjugations, and folds, i.e.~everything in the above list other than twists. 

\subsection{Culler--Vogtmann Outer space~$CV_n$ and its spine~$K_n$}\label{subsec:Culler-Vogtmann Outer space}

Let~$F_n$ denote the free group of rank~$n$. In \cite{CullerVogtmannOuterSpace86}, Culler and Vogtmann introduced what is now called \emph{Culler--Vogtmann Outer space}, a contractible complex~$CV_n$ with a proper action of~$\operatorname{Out}(F_n)$. Relevant for us,~$CV_n$ has a natural deformation retract~$K_n$ called its \emph{spine}, which is a contractible cube complex with a proper and cocompact action of~$\operatorname{Out}(F_n)$. Bridson and Vogtmann \cite{BridsonVogtmannSymsOuterSpace01} prove that the group of simplicial automorphisms of~$K_n$ is precisely~$\operatorname{Out}(F_n)$.

For surveys of some of the applications of~$CV_n$, we refer the reader to \cite{VogtmannOuterSpaceSurvey02}, \cite{VogtmannOuterSpaceSurvey18}, \cite{CroweHelmsVogtmannOuterSpaceSurvey25}.

Generalising~$CV_n$, Charney and Vogtmann (initially with Crisp \cite{CharneyCrispVogtmann2dimRAAGs07}, then with Stambaugh \cite{CharneyStambaughVogtmannUntwistedOuterSpace17}, and finally the full construction with Bregman \cite{BregmanCharneyVogtmannOuterSpaceRAAGs23}) have produced an `Outer space'~$\mathcal{O}_\Gamma$ for any RAAG~$\raag$. In particular, there is an \emph{untwisted Outer space}~$\mathcal{O}_\Gamma^U$, first constructed in \cite{CharneyStambaughVogtmannUntwistedOuterSpace17} (where it is denoted~$\Sigma_\Gamma$), which is a contractible complex with a proper action of~$\uag$. In analogy with~$CV_n$ retracting onto its spine~$K_n$, untwisted Outer space also has a \emph{spine}, denoted~$\spine$. We will describe this in more detail in \S\ref{subsec:untwisted spine}.

\subsection{Virtual cohomological dimension}\label{subsec:vcd}

We briefly recall the notion of \emph{virtual cohomological dimension}; the reader is referred to \cite{BrownCohomologyofGroups1stEd82} for further details.

Cohomological dimension may be defined for any unital nonzero commutative ring~$R$, but we shall only be interested in the case~$R = \mathbb{Z}$.

\begin{definition}[cf. ~\cite{BrownCohomologyofGroups1stEd82}, \emph{Lemma VIII.2.1}]
    Let~$G$ be a discrete group, and consider~$\mathbb{Z}$ as the trivial~$\mathbb{Z}G$-module, where~$\mathbb{Z}G$ is the group ring. Define~$\textsc{cd}(G) = \textsc{cd}_{\mathbb{Z}}(G)$, the \emph{cohomological dimension} of~$G$, to be the least integer~$n$ such that~$\mathbb{Z}$ admits a projective resolution of~$\mathbb{Z}G$-modules of length~$n$, if such~$n$ exists, or to be infinite otherwise.
\end{definition}

Equivalently (and hence the nomenclature),~$\textsc{cd}(G)$ is the greatest integer such that the cohomology group~$H^n(G; M)$ does not vanish, for any~$\mathbb{Z}G$-module~$M$.

Observe that the cohomological dimension of a group with torsion is always infinite, as all the even cohomology groups with coefficients in~$\mathbb{Z}G$ are nonzero. However, we have the following result of Serre \cite{SerreCohomologieGroupesDiscrets71}, which can be found in \textit{Ch.~VIII.3} of \cite{BrownCohomologyofGroups1stEd82}.

\begin{theorem}[Serre]
    Let~$G$ be virtually torsion-free. Then all finite-index subgroups of~$G$ have the same cohomological dimension.
\end{theorem}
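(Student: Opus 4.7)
The plan is to reduce the theorem to the following key special case: if $H$ is a torsion-free group and $H' \leq H$ has finite index, then $\textsc{cd}(H) = \textsc{cd}(H')$. Granting this, given any two torsion-free finite-index subgroups $H_1, H_2 \leq G$, their intersection $H_0 \coloneqq H_1 \cap H_2$ is again torsion-free and of finite index in each $H_i$, so applying the special case twice yields $\textsc{cd}(H_1) = \textsc{cd}(H_0) = \textsc{cd}(H_2)$.

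For the special case, I would establish the two inequalities separately. The inequality $\textsc{cd}(H') \leq \textsc{cd}(H)$ holds for any subgroup (of any index) and requires no torsion-freeness: a projective $\mathbb{Z}H$-resolution of the trivial module $\mathbb{Z}$ restricts to a projective $\mathbb{Z}H'$-resolution, because $\mathbb{Z}H$ is free (and hence projective) as a right $\mathbb{Z}H'$-module, with basis given by any set of coset representatives.

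The reverse inequality $\textsc{cd}(H) \leq \textsc{cd}(H')$ is the core content of Serre's theorem and uses torsion-freeness of $H$ in an essential way. Set $n \coloneqq \textsc{cd}(H')$; the goal is to show that $H^k(H; M) = 0$ for every $\mathbb{Z}H$-module $M$ and every $k > n$. The tools are Shapiro's lemma, which yields $H^k(H; \operatorname{Coind}_{H'}^H N) \cong H^k(H'; N) = 0$ for $k > n$ and any $\mathbb{Z}H'$-module $N$, together with the fact that since $[H:H'] < \infty$, coinduction and induction agree. The adjunction unit supplies a natural $\mathbb{Z}H$-linear map $M \to \operatorname{Coind}_{H'}^H(\operatorname{Res}_{H'}^H M)$; applying the long exact sequence in cohomology to the resulting short exact sequence, combined with an inductive analysis of the cokernel, propagates the vanishing from coinduced modules to an arbitrary $M$.

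The main obstacle is precisely the cokernel analysis in this last step, and it is here that torsion-freeness is indispensable: without it, $H$ itself has infinite cohomological dimension (as noted just before the theorem in the excerpt), so no such bound could possibly hold. Both the injectivity of the adjunction unit and the termination of the inductive propagation rely on this hypothesis. Once the key special case is proven, the reduction at the start of the argument completes the proof.
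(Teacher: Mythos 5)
The paper does not prove this statement; it is quoted from Serre via \emph{Ch.~VIII.3} of Brown's book, so your proposal has to be measured against the standard proof. Your reduction to the case of a torsion-free $H$ with a finite-index subgroup $H'$ (via $H_1 \cap H_2$) is correct, as is the easy inequality $\textsc{cd}(H') \leq \textsc{cd}(H)$ by restricting a projective resolution. The gap is in the reverse inequality, which is the entire content of Serre's theorem, and the mechanism you propose cannot be completed as described. The adjunction unit $M \to \operatorname{Coind}_{H'}^{H}(\operatorname{Res} M)$ is injective for \emph{every} subgroup of every group (evaluate at $1$), so it cannot be where torsion-freeness enters. More seriously, the ``inductive analysis of the cokernel'' runs the wrong way: from $0 \to M \to C^0 \to Z \to 0$ with $C^0$ coinduced, the long exact sequence gives $H^{k}(H;M) \cong H^{k-1}(H;Z)$ only for $k-1 > n$, and in the critical degree $k = n+1$ it only exhibits $H^{n+1}(H;M)$ as a quotient of $H^{n}(H;Z)$, which there is no reason to expect to vanish. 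Iterating the coresolution never escapes this. Indeed, every ingredient you list (injective unit, Shapiro, $\operatorname{Coind}=\operatorname{Ind}$ for finite index, long exact sequences) is available for $H = \mathbb{Z}/2$ and $H' = \{1\}$, where the conclusion is false since $\textsc{cd}(\mathbb{Z}/2) = \infty$; so no argument built only from these tools can succeed.

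What is actually needed, and what your sketch omits, is a concrete use of torsion-freeness. The standard route has two genuinely nontrivial steps. First, one shows $\textsc{cd}(H) < \infty$: take a length-$n$ projective $\mathbb{Z}H'$-resolution (or an $n$-dimensional contractible free $H'$-complex $X$) and form the tensor product over $\mathbb{Z}$ of its $[H:H']$ conjugates (topologically, $\operatorname{Map}_{H'}(H, X) \cong X^{[H:H']}$); the $H$-stabilizers of the resulting cells/summands are finite because they inject into the symmetric group on $H/H'$, hence trivial by torsion-freeness, so the terms are $\mathbb{Z}H$-projective and $\textsc{cd}(H) \leq n[H:H'] < \infty$. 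Second, once $\textsc{cd}(H) = k < \infty$, one uses that $H^{k}(H;F) \neq 0$ for some free module $F$, that top-degree cohomology commutes with direct sums, and Shapiro's lemma applied to $\mathbb{Z}H \cong \operatorname{Coind}_{H'}^{H}\mathbb{Z}H'$ to get $H^{k}(H;\mathbb{Z}H) \cong H^{k}(H';\mathbb{Z}H') \neq 0$, whence $k \leq \textsc{cd}(H') = n$. As written, your proposal names the right auxiliary tools but defers the whole theorem to an unexplained ``cokernel analysis,'' so it is not a proof.
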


Hence for a group~$G$ which contains a finite-index torsion-free subgroup~$H$, the \emph{virtual cohomological dimension} of~$G$, defined to be~$\textsc{vcd}(G) \coloneqq \textsc{cd}(H)$, is well-defined. 

Charney and Vogtmann have shown \cite{CharneyVogtmannFinitenessProperties09} that for any finite simplicial graph~$\Gamma$,~$\oag$ is virtually torsion-free and has finite virtual cohomological dimension. Hence~$\vcduag$ is also finite.

The following geometric control over the virtual cohomological dimension will be crucial.

\begin{theorem}[\cite{BrownCohomologyofGroups1stEd82}, \emph{Theorem VIII.11.1}]\label{thm:vcd upper bound realisation theorem}
    Let~$G$ act properly and cocompactly on a proper contractible CW-complex~$X$. Then~$\textsc{vcd}(G) \leq \dim(X)$.
\end{theorem}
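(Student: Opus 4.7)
The plan is to reduce the statement to a classical computation in group cohomology, via the standard strategy: pass to a torsion-free finite-index subgroup, obtain a free cocompact cellular action, and read off a short projective resolution of $\mathbb{Z}$ from the cellular chain complex of $X$.

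First I would fix a torsion-free finite-index subgroup $H \leq G$ (such an $H$ exists whenever $\textsc{vcd}(G)$ is defined), so that by definition $\textsc{vcd}(G) = \textsc{cd}(H)$. Since the $G$-action on $X$ is proper, all point stabilizers in $G$ are finite; intersected with the torsion-free $H$ they become trivial, so the $H$-action on $X$ is free. Cocompactness of the $G$-action plus finiteness of the index $[G : H]$ gives that the $H$-action is also cocompact, so the quotient $Y := H \backslash X$ is a finite CW-complex (one may need to replace the original CW-structure with an $H$-equivariant one so that $H$ permutes cells freely, but this is always possible for a proper action of a discrete group on a CW-complex). The projection $p : X \to Y$ is then a regular covering with deck group $H$, and $\dim(Y) = \dim(X)$.

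Next, since $X$ is contractible, $Y$ is a $K(H,1)$ space. The cellular chain complex $C_\bullet(X; \mathbb{Z})$ is a complex of free $\mathbb{Z}H$-modules — the $n$-cells of $X$ partition into $H$-orbits, each of which is a free $\mathbb{Z}H$-module on a choice of representative, and these bases together present $C_n(X; \mathbb{Z})$ as a free $\mathbb{Z}H$-module. Augmenting by $\varepsilon : C_0(X;\mathbb{Z}) \to \mathbb{Z}$ and using contractibility of $X$, the sequence
\[
0 \to C_{\dim(X)}(X;\mathbb{Z}) \to \cdots \to C_0(X;\mathbb{Z}) \xrightarrow{\varepsilon} \mathbb{Z} \to 0
\]
is exact, giving a free $\mathbb{Z}H$-resolution of $\mathbb{Z}$ of length at most $\dim(X)$. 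By definition $\textsc{cd}(H) \leq \dim(X)$, and therefore $\textsc{vcd}(G) \leq \dim(X)$.

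The main obstacle I expect is purely technical rather than conceptual: one must ensure the existence of a CW-structure on $X$ compatible with the action of $H$ in the sense that $H$ acts cellularly and permutes cells freely. For group actions on CW-complexes this is standard — either assumed at the outset, or arranged by equivariant subdivision — but it is the only nontrivial ingredient beyond the formal manipulation of chain complexes. Once that is in hand, the argument is a direct application of the fact that contractible covers of $K(\pi,1)$'s give free resolutions of $\mathbb{Z}$ over the group ring.
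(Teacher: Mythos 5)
This statement is quoted from Brown's book (Theorem VIII.11.1) and the paper gives no proof of its own; your argument is precisely the standard one underlying that reference — pass to a torsion-free finite-index subgroup $H$, observe that properness forces the $H$-action to be free, and read off a length-$\dim(X)$ free $\mathbb{Z}H$-resolution of $\mathbb{Z}$ from the augmented cellular chain complex of the contractible complex $X$. The proof is correct (note that cocompactness is not actually needed for the dimension bound, only for finer finiteness properties), and your flagged technical caveat about arranging a cellular action with freely permuted cells is the right one and is harmless here.
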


Since the spine~$K_n$ of~$CV_n$ is proper (since it is a locally finite complex) and contractible and has a proper and cocompact action of~$\operatorname{Out}(F_n)$, this theorem implies that
\[\textsc{vcd}(\operatorname{Out}(F_n)) \leq \dim(K_n) = 2n-3.\]
On the other hand, as noted in §0 of \cite{CullerVogtmannOuterSpace86}, one can find a free abelian subgroup of~$\operatorname{Out}(F_n)$ of rank~$2n-3$; one such example is the image in~$\operatorname{Out}(F_n)$ of \[\langle \alpha_i: x_i \mapsto x_1x_i, \; \beta_i: x_i \mapsto x_ix_1 \;:\; 2 \leq i \leq n\rangle \leq \operatorname{Aut}(F_n).\] Thus~$\textsc{vcd}(\operatorname{Out}(F_n)) = 2n-3$.

There are no twists in~$\operatorname{Out}(F_n)$, so in this case the untwisted subgroup~$U(F_n)$ is the entire outer automorphism group. It is therefore natural to ask to what extent the above strategy generalises to determining~$\vcduag$, for any RAAG~$\raag$.

\section{The untwisted spine and known results on \texorpdfstring{$\vcduag$}{vcd of untwisted subgroup}}\label{sec:untwisted spine}

Culler--Vogtmann Outer space~$CV_n$ is constructed using `marked metric graphs': graphs with a homotopy equivalence to the~$n$-petalled rose, and where each of the edges has a length. Roughly, one can obtain the spine~$K_n$ by ignoring the edge-lengths and considering only the combinatorial type of each marked graph. 

For any right-angled Artin group~$\raag$, untwisted Outer space~$\mathcal{O}_\Gamma^U$ is built analogously. The Salvetti complex~$\mathbb{S}_\Gamma$ generalises the~$n$-petalled rose, and then the basic objects of~$\mathcal{O}_\Gamma^U$ are \emph{marked metric~$\Gamma$-complexes}: cube complexes which are `marked' with a certain homotopy equivalence to~$\mathbb{S}_\Gamma$ and which are endowed with a metric that turns the constituent cubes into rectilinear parallelepipeds. Once again, the untwisted spine~$\spine$ is roughly obtained by ignoring this metric. We will not need the construction of the untwisted Outer space~$\mathcal{O}_\Gamma^U$, but we now outline in some detail the structure of~$\spine$.

\subsection{$\Gamma$-Whitehead partitions}\label{subsec:Gamma-Whitehead partitions}

Fix a finite simplicial graph~$\Gamma$, and write~$V = V(\Gamma)$. Write~$V^\pm$ for~$V \cup V^{-1}$, i.e.~the collection of the generators of~$\raag$ and their inverses. Write~$\Gamma^\pm$ for the simplicial graph on~$V^\pm$ where vertices corresponding to commuting elements of~$\raag$ are joined by an edge, unless they are inverse to each other. We will write~$\link{v}^\pm$ (resp.~$\str{v}^\pm$) to mean the link (resp. star) of~$v$ in~$\Gamma^\pm$.

We now define~$\Gamma$-Whitehead partitions. Roughly, given a vertex~$m \in V$, we distribute the connected components of~$\Gamma^\pm \setminus \link{m}^\pm$ between two `sides', with~$m$ in one of the sides and~$m^{-1}$ in the other. 

\begin{definition}\label{def:Gamma-Whitehead partition}
    Let~$m \in V$. A \emph{$\Gamma$-Whitehead partition~$(\mathcal{P}, m)$ based at m} is a partition of the set~$V^\pm$ satisfying the following conditions:
    \begin{itemize}
        \item the partition consists of three subsets: two \emph{sides}~$P$ and~$\overline{P}$, and the \emph{link}~$\link{\calP}$;
        \item \noindent$\link{\calP} = V\left(\link{v}^\pm\right)$ -- that is,~$\link{\calP}$ consists of all generators that commute with~$m$, and their inverses;
        \item \noindent$P \sqcup \overline{P}$ is a \emph{thick} partition of~$V^\pm \setminus \link{\calP}$ (i.e., there are at least two elements in each side);
        \item \noindent$m$ and~$m^{-1}$ are in different sides of~$\calP$;
        \item if two distinct vertices~$v, w$ of~$\Gamma$ are in the same connected component of~$\Gamma \setminus \str{m}$, then~$v, w$ and their inverses are all on the same side of~$\calP$.
    \end{itemize}

    We call the side of~$\calP$ which contains~$m$ the \emph{$m$-side} (and the other side the \emph{$m^{-1}$-side}). 

    \noindent We will abbreviate `$\Gamma$-Whitehead partition' to `$\Gamma$-partition'; when~$\Gamma$ is clear, we will often simply say `partition' instead.
\end{definition}

When considering a~$\Gamma$-partition, we may sometimes omit reference to the base (this is discussed along with~$\Gamma$-Whitehead automorphisms below). In this case, we may give the~$\Gamma$-partition as~$\calP = \left(P\vert \overline{P} \vert \link{\calP}\right)$.

\emph{Figure \ref{fig:example of Gamma-partition}} presents an example illustrating this definition.

\begin{figure}[H]
    \begin{center}
        \begin{tikzpicture}
            %The graph \Gamma
            %
            \draw[black, densely dotted, thick] (5, -1) ellipse (1.5cm and 0.5cm);
            \draw[black, densely dotted, thick] (4, -3) ellipse (0.5cm and 1.5cm);
            \draw[black, rounded corners, thick, densely dotted, fill=black!20!white] (1.5, -3.5) rectangle ++(1, 3);
            \fill (0, 0) circle (2pt);
            \fill (2, -1) circle (2pt);
            \fill (4, -1) circle (2pt);
            \fill (6, -1) circle (2pt);
            \fill (0, -2) circle (2pt);
            \fill (2, -2) circle (2pt);
            \fill (4, -2) circle (2pt);
            \fill (2, -3) circle (2pt);
            \fill (4, -3) circle (2pt);
            \fill (2, -4) circle (2pt);
            \fill (4, -4) circle (2pt);
            \draw[black, thick] (0, 0) -- (2, -1);
            \draw[black, thick] (2, -1) -- (4, -1);
            \draw[black, thick] (4, -1) -- (6, -1);
            \draw[black, thick] (0, -2) -- (2, -1);
            \draw[black, thick] (0, -2) -- (2, -2);
            \draw[black, thick] (0, -2) -- (2, -3);
            \draw[black, thick] (2, -2) -- (4, -2);
            \draw[black, thick] (2, -2) -- (4, -3);
            \draw[black, thick] (4, -2) -- (4, -3);
            \draw[black, thick] (2, -3) -- (4, -3);
            \draw[black, thick] (2, -3) -- (2, -4);
            \draw[black, thick] (4, -3) -- (4, -4);
            \node at (0, -2) [left = 2pt, black, thick]{$v$};
            \node at (0, 0) [left = 2pt, black, thick]{$x$};
            \node at (2, -4) [below = 2pt, black, thick]{$y$};
            \node at (5.2, -0.3) [right = 2pt, black, thick]{$C_1$};
            \node at (4.35, -3.5) [right = 2pt, black, thick]{$C_2$};
            \node at (2.1, -0.6) [above = 2pt, black, thick]{\small$\operatorname{lk}(v)$};
            %
            %The partition
            %
            \draw[black, thin, densely dotted, rounded corners, fill = black!10!white] (-2, -5.6) to (-0.2, -5.6) to (-0.2, -7.1) to (2.2, -7.1) to (2.2, -7.9) to (-3.4, -7.9) to (-3.4, -5.6) to (-2, -5.6);
            \draw[black, thin, densely dotted, rounded corners, fill = black!10!white] (1, -5.6) to (5.8, -5.6) to (5.8, -7.9) to (2.6, -7.9) to (2.6, -6.4) to (0.2, -6.4) to (0.2, -5.6) to (1, -5.6);
            \draw[black, thin, densely dotted, rounded corners] (-2.4, -5.8) to (-1.6, -5.8) to (-1.6, -7.7) to (-3.2, -7.7) to (-3.2, -5.8) to (-2.4, -5.8);
            \draw[black, thin, densely dotted, rounded corners] (4, -5.8) to (5.6, -5.8) to (5.6, -7.7) to (2.8, -7.7) to (2.8, -5.8) to (4, -5.8);
            \draw[black, thin, densely dotted, rounded corners, fill=black!20!white] (7, -5.6) to (9.4, -5.6) to (9.4, -7.9) to (6.2, -7.9) to (6.2, -5.6) to (7, -5.6);
            \fill (-3, -6) circle (2pt);
            \fill (-3, -7.5) circle (2pt);
            \fill (-1.8, -6) circle (2pt);
            \fill (-1.8, -7.5) circle (2pt);
            \fill (-0.6, -6) circle (2pt);
            \fill (-0.6, -7.5) circle (2pt);
            \fill (0.6, -6) circle (2pt);
            \fill (0.6, -7.5) circle (2pt);
            \fill (1.8, -6) circle (2pt);
            \fill (1.8, -7.5) circle (2pt);
            \fill (3, -6) circle (2pt);
            \fill (3, -7.5) circle (2pt);
            \fill (4.2, -6) circle (2pt);
            \fill (4.2, -7.5) circle (2pt);
            \fill (5.4, -6) circle (2pt);
            \fill (5.4, -7.5) circle (2pt);
            \fill (6.6, -6) circle (2pt);
            \fill (6.6, -7.5) circle (2pt);
            \fill (7.8, -6) circle (2pt);
            \fill (7.8, -7.5) circle (2pt);
            \fill (9, -6) circle (2pt);
            \fill (9, -7.5) circle (2pt);
            \node at (-0.6, -6) [below = 1pt, thick, black]{\small$y$};
            \node at (-0.6, -7.65) [above = 1pt, black, thick]{\small$y^{-1}$};
            \node at (0.6, -6) [below = 1pt, thick, black]{\small$v$};
            \node at (0.6, -7.65) [above = 1pt, black, thick]{\small$v^{-1}$};
            \node at (1.8, -6) [below = 1pt, thick, black]{\small$x$};
            \node at (1.8, -7.65) [above = 1pt, black, thick]{\small$x^{-1}$};
            \node at (-2.4, -6.75) [black, thick]{$C_1^\pm$};
            \node at (4.2, -6.75) [black, thick]{$C_2^\pm$};
            \node at (-1.8, -5.6) [above = 1pt, black, thick]{$\overline{P}$};
            \node at (3, -5.6) [above = 1pt, black, thick]{$P$};
            \node at (7.8, -5.6) [above = 1pt, black, thick]{$\link{v}^\pm = \link{\calP}$};
        \end{tikzpicture}
        \caption{This is a schematic showing an example of a graph~$\Gamma$ and a~$\Gamma$-partition $\calP = \left(\{v, x, C_2^\pm\} \; | \; \{v^{-1}, x^{-1}, y, y^{-1}, C_1^\pm\} \; | \; \link{\calP} \right)$, which is based at~$v$. In the illustration of~$\calP$, the nodes are arranged in pairs, with a vertex of~$\Gamma$ on the top row and its inverse below it on the bottom row. Here,~$v$ is the only possible base for~$\calP$. The connected components of~$\Gamma^\pm \setminus \str{v}^\pm$ are the singletons~$\{x\}$,~$\{x^{-1}\}$,~$\{y\}$,~$\{y^{-1}\}$, along with the components~$C_1^\pm$,~$C_2^\pm$ as indicated in the diagram.}
        \label{fig:example of Gamma-partition}
    \end{center}
\end{figure}

\begin{definition}\label{def:split and max of a partition}
    Given a~$\Gamma$-partition~$\calP$, we say that~$\calP$ \emph{splits} a vertex~$v$ if~$v$ and~$v^{-1}$ are on different sides of~$\calP$. Recall that we have a partial order~$\leq$ on the (equivalence classes of) vertices of~$\Gamma$. Define
    \begin{center}
       ~$\splt{\calP} \coloneqq \{v \in V \colon \calP \; \text{splits} \; v\} \; ;$
        \[\mx{\calP} \coloneqq \{v \in V \colon v \; \text{maximal w.r.t.~$\leq$ in} \; \splt{\calP}\}.\]
    \end{center}

    %\noindent It will also be convenient for us to define, for any vertex~$v \in V$,~$\mx{v} \coloneqq v$.
\end{definition}

The definition of~$\Gamma$-partition is motivated by the following: each~$\Gamma$-partition~$(\calP, m)$ corresponds to a \emph{$\Gamma$-Whitehead automorphism}~$\varphi(\calP, m)$ of~$\raag$, as follows. If~$y \in \splt{\calP}$ then~$\varphi(\calP, m)$ sends~$y \mapsto ym^{-1}$ for~$y \in P$, and~$y \mapsto my$ if~$y \in \overline{P}$. If~$y, y^{-1} \in P$ then~$\varphi(\calP, m)$ sends~$y \mapsto mym^{-1}$, and for all other~$y$, we have~$\varphi(\calP, m)(y) = y$. Observe that if we did not require \emph{thickness} (that is, that both sides contain at least two elements), then the corresponding~$\Gamma$-Whitehead automorphism might be inner. 

Note that if~$m, m' \in \mx{\calP}$ then while~$(\calP, m)$ and~$(\calP, m')$ (and~$(\calP, m^{-1})$ and~$(\calP, m'^{-1})$) are the same~$\Gamma$-partition, their corresponding~$\Gamma$-Whitehead automorphisms are not the same. All elements of~$\mx{\calP}$ are equivalent to each other with respect to~$\leq$, and none commutes with any other element of~$\mx{\calP}$, so they each have the same link in~$\Gamma^\pm$. Hence, any element of~$\mx{\calP}^\pm$ could serve as a base for the partition~$\calP$. A~$\Gamma$-partition is therefore entirely determined by one of its sides, and we may omit reference to its base if we are simply considering the partition and not its associated~$\Gamma$-Whitehead automorphism. 

Every fold or partial conjugation is an instance of a~$\Gamma$-Whitehead automorphism, and each~$\Gamma$-Whitehead automorphism is a product of folds and partial conjugations. The untwisted subgroup~$\uag$ is generated by inversions, graph automorphisms, and~$\Gamma$-Whitehead automorphisms, while the work of Laurence and Servatius (\cite{ServatiusAutRAAGs89}, \cite{LaurenceAutRAAGs95}; see \textit{Theorem-Definition \ref{thmdef:generators for Aut(RAAG)}}) proves that~$\oag$ is generated by these along with twists.

\begin{definition}\label{def:adjacency}
    Say that~$\Gamma$-partitions~$\calP$ and~$\calQ$ are \emph{adjacent} if~$\mx{\calP} \subseteq \link{\calQ}$, or equivalently, if~$\mx{\calQ} \subseteq \link{\calP}$ (equivalence follows from the pairwise equivalence of elements of~$\mx{\calP}$.) If~$v$ is a vertex of~$\Gamma$, say that \emph{$v$ is adjacent to~$\calP$} if~$v \in \link{\calP}$. Finally, say that two vertices of~$\Gamma$ are \emph{adjacent} if they are adjacent in~$\Gamma$ in the usual graph-theoretic sense.
\end{definition}

If two~$\Gamma$-partitions~$\calP, \calQ$ are adjacent, then this means that some (hence any) base of~$\calP$ commutes with some (hence any) base of~$\calQ$, but that these two bases are not equal. The converse also holds. We refer the reader to the ``Warning'' recorded after \emph{Definition 2.7} in \cite{BregmanCharneyVogtmannOuterSpaceRAAGs23}.

\begin{definition} \label{def:compatibility}
    We will say that distinct~$\Gamma$-partitions~$\calP$ and~$\calQ$ are \emph{compatible} if either
    \begin{itemize}
        \item~$\calP$ and~$\calQ$ are adjacent; or
        \item there exists a side of~$\calP$ which has empty intersection with at least one of the sides of~$\calQ$.
    \end{itemize}

    If~$\Pi$ is a set of~$\Gamma$-partitions which are pairwise compatible with each other, then call~$\Pi$ a \emph{compatible set}.
\end{definition}

Note that compatibility is not transitive, as demonstrated in \emph{Figure \ref{fig:compatibility is not transitive}}.

\begin{figure}
    \begin{center}
        \begin{tikzpicture}
            %
            %The graph \Gamma
            %
            \fill (-2, 0.5) circle (2pt);
            \fill (-1, 0.5) circle (2pt);
            \fill (0, 0.5) circle (2pt);
            \fill (1, 0.5) circle (2pt);
            \fill (2, 0.5) circle (2pt);
            \draw[black, thick] (-2, 0.5) -- (-1, 0.5);
            \draw[black, thick] (-1, 0.5) -- (0, 0.5);
            \draw[black, thick] (0, 0.5) -- (1, 0.5);
            \node at (-2, 0.5) [above = 1pt, black, thick]{$a$};
            \node at (-1, 0.5) [above = 1pt, black, thick]{$b$};
            \node at (0, 0.5) [above = 1pt, black, thick]{$c$};
            \node at (1, 0.5) [above = 1pt, black, thick]{$d$};
            \node at (2, 0.5) [above = 1pt, black, thick]{$e$};
            \node at (-3, 0.5) [black, thick]{$\Gamma \;\; =$};
            %
            %Statement of the partitions
            %
            \node at (0, -0.5) [black, thick]{$\calP_1 = \left( \{a, c, c^{-1}, d, d^{-1}\} \;|\; \{a^{-1}, e, e^{-1}\} \;|\; \{b, b^{-1}\} \right)$};
            \node at (0, -1.2) [black, thick]{$\calP_2 = \left( \{b, e\} \;|\; \{b^{-1}, d, d^{-1}, e^{-1}\} \;|\; \{a, a^{-1}, c, c^{-1}\} \right)$};
            \node at (0, -1.9) [black, thick]{$\calP_3 = \left( \{d, a, a^{-1}, b, b^{-1}, e\} \;|\; \{d^{-1}, e^{-1}\} \;|\; \{c, c^{-1}\} \right)$};
            %
            %Picture showing compatibility between P_1 and P_2
            %
            %P_1:
            \draw [black, thick, rounded corners, densely dotted, fill = black!20!white] (-7, -3.1) to (-4.6, -3.1) to (-4.6, -5.15) to (-6.4, -5.15) to (-6.4, -3.2) to (-7.6, -3.2) to (-7.6, -4) to (-8.4, -4) to (-8.4, -3.1) to (-7, -3.1);
            \draw [black, thick, rounded corners, densely dotted] (-7, -5.4) to (-3.6, -5.4) to (-3.6, -3.1) to (-4.4, -3.1) to (-4.4, -5.3) to (-7.6, -5.3) to (-7.6, -4.5) to (-8.4, -4.5) to (-8.4, -5.4) to (-7, -5.4);
            \draw [black, thick, rounded corners, densely dotted] (-7.4, -4.25) to (-7.4, -5.15) to (-6.6, -5.15) to (-6.6, -3.3) to (-7.4, -3.3) to (-7.4, -4.25);
            \fill (-8, -3.5) circle (2pt);
            \fill (-7, -3.5) circle (2pt);
            \fill (-6, -3.5) circle (2pt);
            \fill (-5, -3.5) circle (2pt);
            \fill (-4, -3.5) circle (2pt);
            \fill (-8, -5) circle (2pt);
            \fill (-7, -5) circle (2pt);
            \fill (-6, -5) circle (2pt);
            \fill (-5, -5) circle (2pt);
            \fill (-4, -5) circle (2pt);
            \node at (-8, -4.05) [above = 1pt, black, thick]{$a$};
            \node at (-7, -4.05) [above = 1pt, black, thick]{$b$};
            \node at (-6, -4.05) [above = 1pt, black, thick]{$c$};
            \node at (-5, -4.05) [above = 1pt, black, thick]{$d$};
            \node at (-4, -4.05) [above = 1pt, black, thick]{$e$};
            \node at (-7, -4.25) [black, thick]{\tiny$\link{\calP_1}$};
            \node at (-5.5, -2.85) [black, thick]{$P_1$};
            \node at (-3.8, -5.7) [black, thick]{$\overline{P_1}$};
            %
            %P_2:
            %
            \draw [black, thick, rounded corners, densely dotted, fill = black!20!white] (0, -3.1) to (2.4, -3.1) to (2.4, -4) to (1.6, -4) to (1.6, -3.2) to (-0.6, -3.2) to (-0.6, -4) to (-1.4, -4) to (-1.4, -3.1) to (0, -3.1);
            \draw [black, thick, rounded corners, densely dotted] (0, -5.4) to (2.4, -5.4) to (2.4, -4.5) to (1.4, -4.5) to (1.4, -3.3) to (0.6, -3.3) to (0.6, -5.3) to (-0.6, -5.3) to (-0.6, -4.6) to (-1.4, -4.6) to (-1.4, -5.4) to (0, -5.4);
            \draw [black, thick, rounded corners, densely dotted] (-2.4, -4.25) to (-2.4, -5.4) to (-1.6, -5.4) to (-1.6, -4.4) to (-0.4, -4.4) to (-0.4, -5.15) to (0.4, -5.15) to (0.4, -3.3) to (-0.4, -3.3) to (-0.4, -4.1) to (-1.6, -4.1) to (-1.6, -3.1) to (-2.4, -3.1) to (-2.4, -4.25);
            \fill (-2, -3.5) circle (2pt);
            \fill (-1, -3.5) circle (2pt);
            \fill (0, -3.5) circle (2pt);
            \fill (1, -3.5) circle (2pt);
            \fill (2, -3.5) circle (2pt);
            \fill (-2, -5) circle (2pt);
            \fill (-1, -5) circle (2pt);
            \fill (0, -5) circle (2pt);
            \fill (1, -5) circle (2pt);
            \fill (2, -5) circle (2pt);
            \node at (-2, -4.05) [above = 1pt, black, thick]{$a$};
            \node at (-1, -4.05) [above = 1pt, black, thick]{$b$};
            \node at (0, -4.05) [above = 1pt, black, thick]{$c$};
            \node at (1, -4.05) [above = 1pt, black, thick]{$d$};
            \node at (2, -4.05) [above = 1pt, black, thick]{$e$};
            \node at (0.5, -2.85) [black, thick]{$P_2$};
            \node at (1.5, -5.7) [black, thick]{$\overline{P_2}$};
            \node at (-1.9, -4.25) [black, thick]{\tiny$\link{\calP_2}$};
            %
            %P_3:
            %
            \draw [black, thick, rounded corners, densely dotted] (6, -3.1) to (8.4, -3.1) to (8.4, -4) to (6.6, -4) to (6.6, -3.2) to (5.4, -3.2) to (5.4, -5.4) to (3.6, -5.4) to (3.6, -3.1) to (6, -3.1);
            \draw [black, thick, rounded corners, densely dotted, fill = black!20!white] (8, -5.4) to (8.4, -5.4) to (8.4, -4.5) to (6.6, -4.5) to (6.6, -5.4) to (8, -5.4);
            \draw [black, thick, rounded corners, densely dotted] (6.4, -4.25) to (6.4, -5.4) to (5.6, -5.4) to (5.6, -3.3) to (6.4, -3.3) to (6.4, -4.25);
            \fill (4, -3.5) circle (2pt);
            \fill (5, -3.5) circle (2pt);
            \fill (6, -3.5) circle (2pt);
            \fill (7, -3.5) circle (2pt);
            \fill (8, -3.5) circle (2pt);
            \fill (4, -5) circle (2pt);
            \fill (5, -5) circle (2pt);
            \fill (6, -5) circle (2pt);
            \fill (7, -5) circle (2pt);
            \fill (8, -5) circle (2pt);
            \node at (4, -4.05) [above = 1pt, black, thick]{$a$};
            \node at (5, -4.05) [above = 1pt, black, thick]{$b$};
            \node at (6, -4.05) [above = 1pt, black, thick]{$c$};
            \node at (7, -4.05) [above = 1pt, black, thick]{$d$};
            \node at (8, -4.05) [above = 1pt, black, thick]{$e$};
            \node at (4.5, -2.85) [black, thick]{$P_3$};
            \node at (7.5, -5.7) [black, thick]{$\overline{P_3}$};
            \node at (6, -4.25) [black, thick]{\tiny$\link{\calP_3}$};
        \end{tikzpicture}
        \caption{An example of a graph~$\Gamma$ and three~$\Gamma$-partitions,~$\calP_1$ (based at~$a$),~$\calP_2$ (based at~$b$), and~$\calP_3$ (based at~$d$). Observe that although~$e$ is split by~$\calP_2$ and~$\calP_3$, it cannot serve as a base for either as~$\link{e} = \emptyset$, so~$e <_\circ b$ and~$e <_\circ d$.~$\calP_1$ is compatible with~$\calP_2$ since there is a choice of side of each (shaded) which have empty intersection (in fact,~$\calP_1$ and~$\calP_2$ are also adjacent, since~$a$ and~$b$ commute). Since~$d$ does not commute with~$a$ or~$b$, we cannot have~$d$ in the link of either~$\calP_1$ or~$\calP_2$, so~$\calP_3$ is not adjacent to~$\calP_1$ or~$\calP_2$. However,~$P_2 \cap \overline{P_3} = \emptyset$, so~$\calP_2$ and~$\calP_3$ are compatible. Each side of~$\calP_3$ has non-empty intersection with each side of~$\calP_1$; hence~$\calP_1$ is not compatible with~$\calP_3$. This example shows that compatibility of partitions is not transitive.}
        \label{fig:compatibility is not transitive}
    \end{center}
\end{figure}
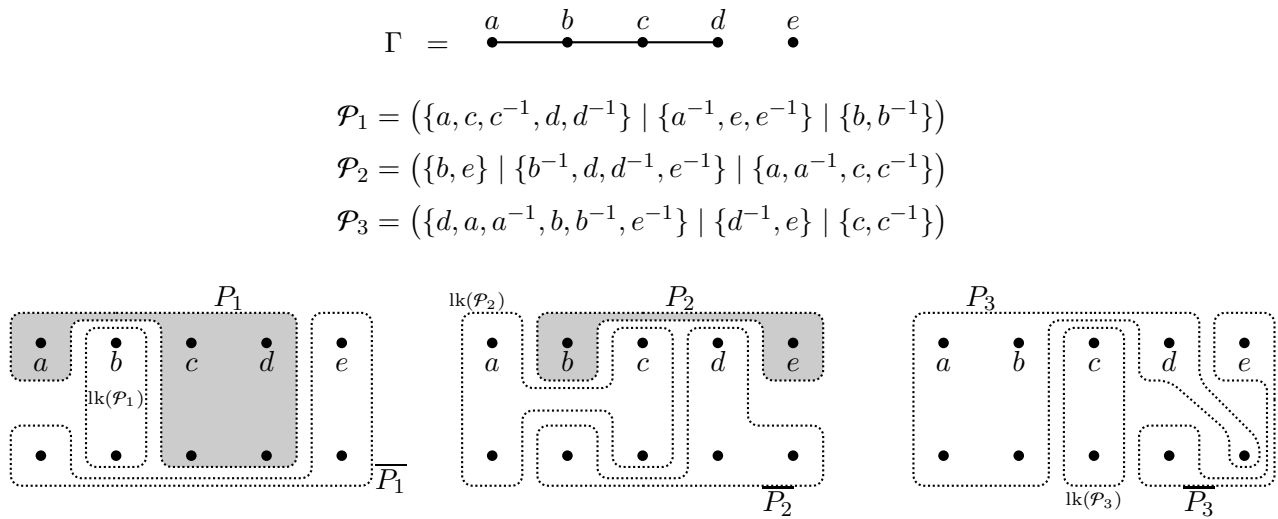

\subsection{The untwisted spine~$\spine$}\label{subsec:untwisted spine}

In this subsection, we recall from \cite{CharneyStambaughVogtmannUntwistedOuterSpace17} the construction of the untwisted spine~$\spine$, and collate some facts about it.

\begin{definition}\label{def:hyperplane collapse}
Let~$H$ be a hyperplane in a cube complex~$X$. Call the closure of the set of cubes that intersect~$H$ the \emph{hyperplane carrier}~$\kappa(H)$. We say that~$H$ is a \emph{carrier retract} if~$\kappa(H)$ is isomorphic to~$H \times [0, 1]$ as a cube complex. If~$H$ is a carrier retract, then the \emph{hyperplane collapse} associated with~$H$ is the deformation retract~$c_H\colon X \to X\ssslash H$ which collapses~$\kappa(H)$ orthogonally to~$H$. For a collection~$\mathcal{H}$ of carrier retracts, we write~$X\ssslash \mathcal{H}$ for the space obtained from~$X$ by collapsing all the hyperplanes in~$\mathcal{H}$ in any order (by \cite[Lemma 4.4]{CharneyStambaughVogtmannUntwistedOuterSpace17}, this is well-defined). We will say that such a set~$\mathcal{H}$ is \emph{acyclic} if~$X \to X\ssslash \mathcal{H}$ is a homotopy equivalence. [The double slash notation~$X\ssslash \mathcal{H}$ is not intended to evoke other notions using the double slash, such as homotopy quotients or GIT quotients; we take our lead from e.g.~\cite{BregmanCharneyVogtmannFiniteSubgroups25}.]

    \noindent If an edge and a hyperplane have non-empty intersection, then we say that they are \emph{dual} to each other. An \emph{orientation} of a hyperplane~$H$ is a consistent choice of orientation on each of the edges dual to~$H$.
\end{definition}

Let~$\Pi$ be a set of pairwise compatible~$\Gamma$-partitions. Associated with~$\Pi$ is a \emph{blowup}~$\salb{\Pi}$ of the Salvetti complex~$\sal$. The blowup is a cube complex with one (unoriented) hyperplane~$H_\calP$ for each~$\Gamma$-partition~$\calP \in \Pi$, and one (oriented) hyperplane~$H_v$ for each vertex~$v \in V(\Gamma)$. The set of hyperplanes associated to partitions~$\mathcal{H} \coloneqq \{H_\calP\}_{\calP \in \Pi}$ is acyclic, and performing the associated collapse~$\salb{\Pi} \to \salb{\Pi} \ssslash \mathcal{H}$ yields a cube complex isomorphic to the Salvetti complex~$\sal$.

For a full description of the blowing-up process, which allows one to build the blowup~$\salb{\Pi}$ from the Salvetti complex~$\sal$, we refer the reader to \cite{CharneyStambaughVogtmannUntwistedOuterSpace17}. To give some sense of how this construction goes, we give the following examples.

\begin{example}
    \begin{enumerate}[(i)]
        \item If~$\Pi = \emptyset$, then~$\salb{\Pi}$ is simply the Salvetti complex~$\sal$.
        \item If~$\Gamma$ has~$n$ vertices and no edges, then~$\salb{\Pi}$ will be a finite connected graph with no separating edges or bivalent vertices and with fundamental group~$\raag \cong F_n$. The edges dual to the set of hyperplanes~$\{H_\calP\}_{\calP \in \Pi}$ form a maximal tree~$T$, and the edges in~$\salb{\Pi} \setminus T$ are oriented and labelled by~$V(\Gamma)$. Collapsing the maximal tree~$T$ yields a rose with~$n$ petals, each labelled by a generator of~$\raag \cong F_n$ -- this is a copy of the Salvetti complex.
        \item The blowup~$\salb{\calP}$ of the Salvetti complex by a single partition~$\calP$ has the following structure. It has two vertices, say~$x_1$ and~$x_2$. The carrier~$\kappa(H_\calP)$ is isomorphic to the product of an interval with the Salvetti complex for the full subgraph on~$\link{\calP}$. Let~$v \in V(\Gamma)$. If there are two edges dual to~$H_v$, then~$v \in \link{\calP}$. If there is only one edge dual to the (oriented) hyperplane~$H_v$ and it terminates at~$x_i$, then~$v \in P_i$; if the edge originates at~$x_i$ then~$v^{-1} \in P_i$. In this way one can recover~$\calP = \left(P_1 \;|\; P_2 \;|\; \link{\calP} \right)$ from the blowup~$\salb{\calP}$.
    \end{enumerate}
\end{example}

A \emph{$\Gamma$-complex} is a cube complex which is isomorphic to the underlying cube complex of a blowup~$\salb{\Pi}$, for~$\Pi$ a set of pairwise compatible~$\Gamma$-partitions. A \emph{blowup structure} on a~$\Gamma$-complex~$X$ is a choice of labelling (and possibly orientation) of the hyperplanes of~$X$ which identifies it with a blowup~$\salb{\Pi}$. This results in one oriented hyperplane labelled~$H_v$ for each~$v \in V(\Gamma)$, while the remaining hyperplanes are unoriented and labelled by~$\Gamma$-partitions (and these~$\Gamma$-partitions form a pairwise compatible set~$\Pi$). A collapse map~$c_\Pi\colon X \to X \ssslash \{H_\calP\}_{\calP \in \Pi} \cong \sal$ is determined by a blowup structure.

A \emph{marking} on a~$\Gamma$-complex~$X$ is a homotopy equivalence~$\alpha\colon X \to \sal$. Choosing a blowup structure~$\salb{\Pi}$ on~$X$, we obtain a composition~$\sal \xrightarrow{\smash{\alpha^{-1}}} X \xrightarrow{c_\Pi} \sal$. We say that the marking~$\alpha$ is \emph{untwisted} if this composition induces an element of~$\uag$ on the level of fundamental groups.

\newpage 
Two marked~$\Gamma$-complexes~$(X, \alpha)$,~$(X', \alpha')$ are \emph{equivalent} if there is a cube complex isomorphism~$i\colon X' \to X$ with~$\alpha \circ i \simeq \alpha'$:

\[\begin{tikzcd}
	X && {X'} \\
	\\
	& {\mathbb{S}_\Gamma}
	\arrow["{\alpha}"', from=1-1, to=3-2]
	\arrow["{\alpha'}", from=1-3, to=3-2]
	\arrow["i"', from=1-3, to=1-1]
\end{tikzcd}\]

\noindent If~$\alpha$ is an untwisted marking, we call the equivalence class of the pair~$(X, \alpha)$ an \emph{untwisted marked~$\Gamma$-complex}. If~$X \cong \sal$, we call the equivalence class of~$(X, \alpha)$ a \emph{marked Salvetti}. In this paper, `marked~$\Gamma$-complex' will always mean `untwisted marked~$\Gamma$-complex'.

\begin{example}[\cite{CharneyStambaughVogtmannUntwistedOuterSpace17}, \emph{Lemma 3.2}]
    Suppose that~$\Pi = \{\calP\}$. For each~$m \in \mx{\calP}$, there is a cube complex isomorphism~$h_m$ of~$\salb{\calP}$ with the following property. Letting~$c_\calP^{-1}$ be a homotopy inverse for~$c_\calP$, the composition
    \[\smash{c_\calP \circ h_m \circ c_\calP^{-1} \colon \sal \to \salb{\calP} \xrightarrow{\cong} \salb{\calP} \to \sal}\]
    induces the~$\Gamma$-Whitehead automorphism~$\varphi(\calP, m)$ on~$\raag = ~\pi_1\left(\sal\right)$.
\end{example}

An acyclic set~$\mathcal{T}$ of hyperplanes in a~$\Gamma$-complex is called \emph{treelike} if the collapse~$c_\mathcal{T}\colon X \to X \ssslash \mathcal{T}$ gives a cube complex isomorphic to~$\sal$.

\begin{proposition}[\cite{CharneyStambaughVogtmannUntwistedOuterSpace17}, \S4]
    The~$X$ be a~$\Gamma$-complex and let~$\mathcal{T}$ be a treelike set of hyperplanes in~$X$. Then there is a set~$\Pi$ of pairwise compatible~$\Gamma$-partitions and an isomorphism~$X \cong \salb{\Pi}$ such that~$\mathcal{T}$ is the set of hyperplanes labelled by~$\Pi$ in~$\salb{\Pi}$.
\end{proposition}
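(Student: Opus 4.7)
The plan is to extract the partition data directly from the geometry of $X$ and $\mathcal{T}$, then recover $X$ as $\salb{\Pi}$ from this data. Since $\mathcal{T}$ is treelike, the collapse $c_\mathcal{T}\colon X \to X \ssslash \mathcal{T}$ identifies the target with $\sal$. The hyperplanes of $X$ outside $\mathcal{T}$ project bijectively onto the hyperplanes of $\sal$, inheriting canonical oriented labels $H_v$ for $v \in V(\Gamma)$; transporting these labels back through $c_\mathcal{T}$ labels every hyperplane of $X$ not in $\mathcal{T}$.

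For each $H \in \mathcal{T}$, I construct a $\Gamma$-partition $\calP_H$ from the carrier structure. The treelike hypothesis forces $H$ to be a carrier retract, so $\kappa(H) \cong H \times [0,1]$ has two distinguished sides, and $X \setminus H$ has exactly two components. Every edge of $X$ dual to some $H_v$ either crosses $H$ (so its midpoint lies on $H$) or sits entirely on one side. Set
\[
P_H = \{v^{\epsilon} \in V^\pm : \text{oriented edges of type } v^{\epsilon} \text{ have their tail on the ``$+$''-side of } H\},
\]
with $\overline{P_H}$ defined symmetrically, and $\link{\calP_H}$ comprising those $v^{\pm 1}$ whose dual edges cross $H$.

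The substance of the argument lies in two verifications. First, that each $\calP_H$ is a bona fide $\Gamma$-partition based at some generator $m$: thickness follows from the non-triviality of $H$ together with the blowup structure already present on $X$; the identification $\link{\calP_H} = \link{m}^\pm$ comes from the fact that edges within $H$ correspond to generators that commute with the edges crossing $H$; and the connectivity axiom on components of $\Gamma \setminus \str{m}$ follows by lifting a path in $\Gamma \setminus \str{m}$ to a chain of squares in $X$ which must avoid $\kappa(H)$ (since none of the path-vertices commute with $m$), forcing the relevant edges to remain on a single side of $H$. Second, that $\Pi = \{\calP_H : H \in \mathcal{T}\}$ is pairwise compatible: if $H, H' \in \mathcal{T}$ cross, then a shared square exhibits bases of the two partitions as distinct commuting generators, yielding adjacency $\mx{\calP_H} \subseteq \link{\calP_{H'}}$; if they do not cross, then $H'$ sits in one side of $\kappa(H)$, directly giving the disjoint-side compatibility criterion.

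Once these are established, the complete labelling of the hyperplanes of $X$ (by $V(\Gamma)$ outside $\mathcal{T}$ and by $\Pi$ on $\mathcal{T}$) is precisely a blowup structure in the sense discussed earlier, so the formal construction of $\salb{\Pi}$ from $\Pi$ recovers $X$ up to cube-complex isomorphism, with $\mathcal{T}$ matching the hyperplanes labelled by $\Pi$. I anticipate the connectivity axiom in the first verification to be the main obstacle, as it is the only step that genuinely uses the combinatorics of $\Gamma$-complexes rather than just the abstract geometry of carrier retracts; it requires translating the graph-theoretic condition in the definition of a $\Gamma$-partition into a statement about how components of the dual $1$-skeleton sit relative to $\kappa(H)$.
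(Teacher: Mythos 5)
The paper does not prove this proposition; it imports it from \cite{CharneyStambaughVogtmann17}, where the argument is an induction on $\vert\mathcal{T}\vert$, collapsing one hyperplane at a time and characterising when a blowup admits a further blowup. Your sketch attempts a direct, non-inductive construction, and it contains a genuine gap at its foundation: the definition of $P_H$ presupposes that each vertex (hence each edge-tail) of $X$ lies on a well-defined ``side'' of $H$, which you justify by claiming that $X\setminus H$ has exactly two components. This is false. Already for the one-partition blowup $X=\salb{\calP}$, the complement of $H_\calP$ is connected: if $\calP$ splits a generator $v$, the unique edge dual to $H_v$ runs from one vertex of the blowup to the other without crossing $H_\calP$, so it joins the two ends of $\kappa(H_\calP)$ inside $X\setminus H_\calP$. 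Your dichotomy ``an edge either crosses $H$ or sits entirely on one side'' fails for exactly the edges that carry the partition data. The two sides of the carrier $\kappa(H)\cong H\times[0,1]$ only assign sides to vertices \emph{in} the carrier; for the general case one must define ``same side of $H$'' via the rest of $\mathcal{T}$ (two vertices are on the same side iff joined by an edge path dual only to hyperplanes of $\mathcal{T}\setminus\{H\}$, i.e.\ iff they map to the same vertex of $X\ssslash(\mathcal{T}\setminus\{H\})$), and proving that this yields exactly two classes and that $X\ssslash(\mathcal{T}\setminus\{H\})$ is a one-partition blowup is precisely the inductive engine you have skipped.

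Two further points compound this. First, your compatibility verification leans on the same undefined notion of sides (``$H'$ sits in one side of $\kappa(H)$''), and the claim that crossing hyperplanes of $\mathcal{T}$ have commuting bases already assumes the relationship between $H$ and the base of $\calP_H$ that the blowup structure is supposed to deliver. Second, the concluding step --- that a labelling of the hyperplanes of $X$ by $V(\Gamma)\sqcup\Pi$ ``is precisely a blowup structure'' and so recovers $X\cong\salb{\Pi}$ --- is circular: in the paper's terminology a blowup structure is by definition a labelling \emph{together with} an identification with $\salb{\Pi}$, and constructing that identification (matching vertices of $X$ with consistent choices of sides for $\Pi$, and cubes with cubes) is the actual content of the proposition, not a formality.
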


Hence, any treelike set of hyperplanes is the set labelled by the partitions in at least one blowup structure. The labels and orientations of the other hyperplanes (those labelled by vertices of~$\Gamma$) may be permuted by an automorphism of~$\Gamma$, and changing these assignments changes the partitions labelling the treelike set by the same (signed) permutation of vertices.

To build the \emph{spine}~$\spine$, define a partial order on the set of marked~$\Gamma$-complexes as follows. Let~$(X, \alpha)$ be a marked~$\Gamma$-complex, let~$\mathcal{T}$ be a treelike set of hyperplanes in~$X$, and let~$\mathcal{H} \subseteq \mathcal{T}$. Then set \[(X \ssslash \mathcal{H}, \alpha \circ c_\mathcal{H}^{-1}) \;<\; (X, \alpha).\]

\begin{definition}\label{def:untwisted spine}
    The \emph{spine}~$\spine$ is the geometric realisation of the poset of marked~$\Gamma$-complexes.
\end{definition}

In other words,~$\spine$ is the simplicial complex where each vertex is a marked~$\Gamma$-complex, and each edge represents a hyperplane collapse, which is a homotopy equivalence, from one~$\Gamma$-complex to another. A~$k$-simplex corresponds to a chain of~$k$ such hyperplane collapses.

We will view the spine~$\spine$ as having a cube complex structure, as follows. Fix a set of pairwise compatible~$\Gamma$-partitions~$\Pi = \{\calP_1, \dots, \calP_k\}$. Any ordering of the~$\calP_i$ gives a~$k$-simplex as above, the bottom (with respect to the partial order) vertex of which is a marked Salvetti. The union of all such~$k$-simplices is a~$k$-dimensional cube (see \textit{Figure \ref{fig:cube complex structure of the spine}}), with bottom vertex the marked Salvetti, and top vertex a marked~$\Gamma$-complex with blowup structure~$\salb{\Pi}$. 

Hence the star of a marked Salvetti~$(\sal, \alpha)$ is a cube complex, with one~$k$-cube for each compatible set~$\Pi$ such that~$\vert \Pi \vert = k$. We will denote this cube by~$c\left(\emptyset, \Pi; \alpha\right)$, which specifies its two extreme vertices and the marking with which we view it. Faces of this cube correspond to subsets~$\Pi_1$,~$\Pi_2$ with~$\emptyset \subseteq \Pi_1 \subseteq \Pi_2 \subseteq \Pi$, and will be denoted~$c\left(\Pi_1, \Pi_2; \alpha\right)$. We suppress the marking from the notation when it is not needed. Note that a cube~$c\left(\Pi_1, \Pi_2\right)$ may lie in the star of two different marked Salvettis~$(\sal, \alpha)$ and~$(\sal, \beta)$. In this case, writing~$c\left(\Pi_1, \Pi_2; \alpha\right)$ simply conveys that we are presently thinking of this cube as a subface of a cube in the star of~$(\sal, \alpha)$.

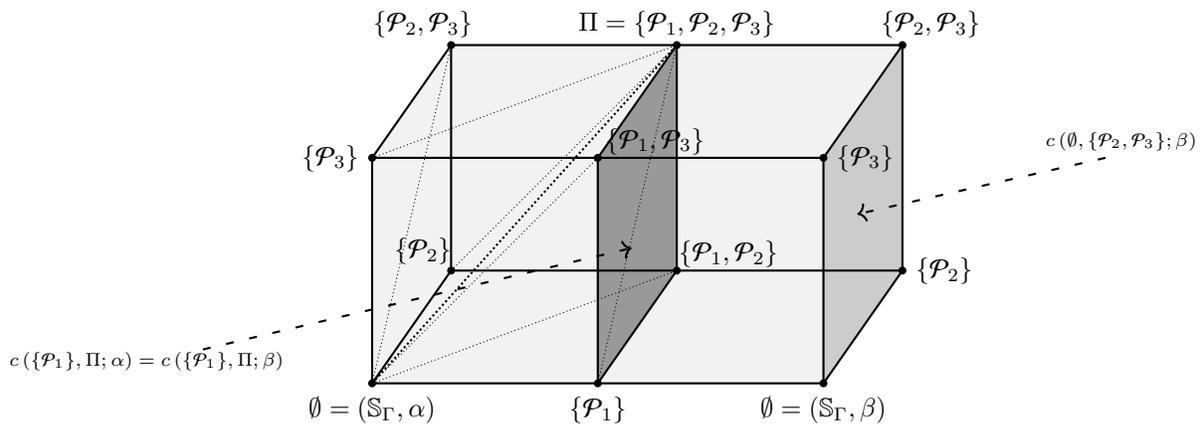
\begin{figure}
    \begin{center}
        \begin{tikzpicture}[scale = 1.5]
            \fill[black!5!white] (0, 0) to (4, 0) to (4.7, 1) to (4.7, 3) to (0.7, 3) to (0, 2) to (0, 0);
            \fill[black!20!white] (4, 0) to (4, 2) to (4.7, 3) to (4.7, 1) to (4, 0);
            \fill[black!40!white] (2, 0) to (2, 2) to (2.7, 3) to (2.7, 1) to (2, 0);
            \fill (0, 0) circle (1pt);
            \fill (2, 0) circle (1pt);
            \fill (4, 0) circle (1pt);
            \fill (0.7, 1) circle (1pt);
            \fill (2.7, 1) circle (1pt);
            \fill (4.7, 1) circle (1pt);
            \fill (0, 2) circle (1pt);
            \fill (2, 2) circle (1pt);
            \fill (4, 2) circle (1pt);
            \fill (0.7, 3) circle (1pt);
            \fill (2.7, 3) circle (1pt);
            \fill (4.7, 3) circle (1pt);
            \draw[black, thick] (0, 0) to (2, 0);
            \draw[black, thick] (2, 0) to (2.7, 1);
            \draw[black, thick] (0, 0) to (0.7, 1);
            \draw[black, thick] (0.7, 1) to (2.7, 1);
            \draw[black, thick] (0, 0) to (0, 2);
            \draw[black, thick] (2, 0) to (2, 2);
            \draw[black, thick] (0.7, 1) to (0.7, 3);
            \draw[black, thick] (2.7, 1) to (2.7, 3);
            \draw[black, thick] (0, 2) to (2, 2);
            \draw[black, thick] (2, 2) to (2.7, 3);
            \draw[black, thick] (0, 2) to (0.7, 3);
            \draw[black, thick] (0.7, 3) to (2.7, 3);
            \draw[black, thick] (2, 0) to (4, 0);
            \draw[black, thick] (2, 2) to (4, 2);
            \draw[black, thick] (2.7, 1) to (4.7, 1);
            \draw[black, thick] (2.7, 3) to (4.7, 3);
            \draw[black, thick] (4, 0) to (4, 2);
            \draw[black, thick] (4, 0) to (4.7, 1);
            \draw[black, thick] (4, 2) to (4.7, 3);
            \draw[black, thick] (4.7, 1) to (4.7, 3);
            \draw[black, densely dotted] (0, 0) to (2.7, 1);
            \draw[black, densely dotted] (0, 0) to (0.7, 3);
            \draw[black, densely dotted] (2, 0) to (2.7, 3);
            \draw[black, densely dotted] (0.7, 1) to (2.7, 3);
            \draw[black, densely dotted] (0, 0) to (2, 2);
            \draw[black, densely dotted] (0, 2) to (2.7, 3);
            \draw[black, thick, densely dotted] (0, 0) to (2.7, 3);
            \node at (0, 0) [below = 1pt, black, thick]{\small$\emptyset = (\sal, \alpha)$};
            \node at (2, 0) [below = 1pt, black, thick]{\small$\{\calP_1\}$};
            \node at (0.45, 0.95) [above = 0.5pt, black, thick]{\small$\{\calP_2\}$};
            \node at (0, 2) [left = 1pt, black, thick]{\small$\{\calP_3\}$};
            \node at (1.95, 2.15) [right = 1pt, black, thick]{\small$\{\calP_1, \calP_3\}$};
            \node at (0.45, 2.95) [above = 1pt, black, thick]{\small$\{\calP_2, \calP_3\}$};
            \node at (2.6, 1.15) [right = 1pt, black, thick]{\small$\{\calP_1, \calP_2\}$};
            \node at (2.7, 2.95) [above = 1pt, black, thick]{\small$\Pi = \{\calP_1, \calP_2, \calP_3\}$};
            \node at (4.95, 2.95) [above = 1pt, black, thick]{\small$\{\calP_2, \calP_3\}$};
            \node at (4.7, 1) [right = 1pt, black, thick]{\small$\{\calP_2\}$};
            \node at (4, 2) [right = 1pt, black, thick]{\small$\{\calP_3\}$};
            \node at (4, 0) [below = 1pt, black, thick]{\small$\emptyset = (\sal, \beta)$};
            \draw[black, thick, loosely dashed, ->] (6.5, 2) to (4.3, 1.5);
            \draw[black, thick, loosely dashed, ->] (-1.5, 0.3) to (2.3, 1.2);
            \node at (5.85, 2.15) [right = 1pt, black, thick]{\tiny$c\left(\emptyset, \{\calP_2, \calP_3\}; \beta\right)$};
            \node at (-2, 0.2) [black, thick]{\tiny$c\left(\{\calP_1\}, \Pi; \alpha\right) = c\left(\{\calP_1\}, \Pi; \beta\right)$};
        \end{tikzpicture}
    \caption{An illustration of the cube complex structure of a portion of~$\spine$, and our notation for the `address' of a cube in~$\spine$. The dotted lines in the cube with bottom vertex~$(\sal, \alpha)$ illustrate the simplicial complex structure of~$\spine$, which we henceforth ignore. The middle shaded face, along which the two shown 3-cubes are identified, may simply be referred to as~$c\left(\{\calP_1\}, \Pi\right)$.}
    \label{fig:cube complex structure of the spine}
    \end{center}
\end{figure}

An automorphism~$\varphi \in \oag$ can be realised by a map~$\overline{\varphi}\colon \sal \to \sal$ (or more precisely, by the homotopy class thereof). Hence, the untwisted subgroup~$\uag$ acts on~$\spine$ on the left by changing the marking:~$\varphi \cdot (X, \alpha) = (X, \overline{\varphi} \circ \alpha)$.

Charney--Stambaugh--Vogtmann \cite{CharneyStambaughVogtmannUntwistedOuterSpace17} also build a full \emph{untwisted Outer space}~$\mathcal{O}_\Gamma^U$ for~$\uag$, in analogy with Culler--Vogtmann Outer space~$CV_n$ for the free group~$F_n$. The spine~$\spine$ naturally sits inside~$\mathcal{O}_\Gamma^U$ as a deformation retract, generalising the way~$K_n$, the spine of~$CV_n$, sits inside reduced Culler--Vogtmann Outer space~$CV_n^{red}$ (the deformation retract of~$CV_n$ obtained by restricting only to graphs with no separating edges). However, we will only be interested in the intrinsic structure of~$\spine$ as the geometric realisation of a poset.

\begin{theorem}[\cite{CharneyStambaughVogtmannUntwistedOuterSpace17}]\label{thm:Charney-Stambaugh-Vogtmann main theorem}
    The spine~$\spine$ is a contractible cube complex, and the action of~$\uag$ on~$\spine$ is proper and cocompact.
\end{theorem}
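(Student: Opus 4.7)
The plan is to check the three claims separately: the cube complex structure of~$\spine$, its contractibility, and the properties of the~$\uag$-action.

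\textbf{Cube complex structure.} Following the description given before the theorem statement, for each marked Salvetti~$(\sal, \alpha)$ and compatible set~$\Pi$ I would take a cube~$c(\emptyset, \Pi; \alpha)$ of dimension~$|\Pi|$ in its star, with sub-faces~$c(\Pi_1, \Pi_2; \alpha)$ indexed by chains~$\Pi_1 \subseteq \Pi_2 \subseteq \Pi$ (with dimension~$|\Pi_2|-|\Pi_1|$, so the face poset is the correct one for a cube). The content is that these local pieces patch consistently along overlaps: the same combinatorial face~$c(\Pi_1, \Pi_2)$ may arise from different ``bottom'' Salvettis and from different blowup structures on the same~$\Gamma$-complex, as illustrated in Figure~\ref{fig:cube complex structure of the spine}. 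I would resolve this by checking that any two blowup structures on a single~$\Gamma$-complex~$X$ differ by a simultaneous relabelling of vertex-hyperplanes and partition-hyperplanes induced by a graph automorphism of~$\Gamma$, and that the rules for composing a marking with a hyperplane collapse~$c_\mathcal{H}$ interact with this relabelling in a controlled way so that the two candidate identifications of~$c(\Pi_1, \Pi_2)$ coincide.

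\textbf{Contractibility.} The natural strategy is to realise~$\spine$ as an equivariant deformation retract of a larger, already contractible space---the untwisted Outer space~$\Sigma_\Gamma$ built from marked~$\Gamma$-complexes with positive edge-lengths summing to one. Contractibility of~$\Sigma_\Gamma$ is proved in the spirit of Culler--Vogtmann's original argument for~$CV_n$: one introduces an equivariant norm given by the sum of translation lengths of a carefully chosen finite set of conjugacy classes in~$\raag$ and uses its negative gradient flow, together with hyperplane collapses at vanishing edge-lengths, to deformation-retract~$\Sigma_\Gamma$ onto~$\spine$. A purely combinatorial alternative would apply Quillen's Theorem~A to the poset of marked~$\Gamma$-complexes, using a canonical ordering of treelike hyperplanes to collapse downwards to a fixed marked Salvetti. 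This is the main obstacle: both approaches require substantial care, either in guaranteeing that the metric flow stays inside~$\Sigma_\Gamma$ across boundary strata, or in choosing a collapse ordering whose descending links are contractible.

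\textbf{Properness and cocompactness.} For properness, the stabiliser of a vertex~$(X,\alpha)$ maps to the finite group of cube-complex automorphisms of the finite cube complex~$X$ by sending~$\varphi$ to the unique isomorphism~$i \colon X \to X$ with~$\alpha \circ i \simeq \overline{\varphi} \circ \alpha$. The kernel of this map is trivial, since if~$i$ is the identity then~$\overline{\varphi}$ is homotopic to the identity, forcing~$\varphi$ to be trivial in~$\uag$; so vertex stabilisers, and hence all cube stabilisers, are finite. For cocompactness, I would use the fact that every marked~$\Gamma$-complex has underlying cube complex isomorphic to a blowup~$\salb{\Pi}$ for some compatible set~$\Pi$ of~$\Gamma$-partitions, and that there are only finitely many such~$\Pi$ (since~$\Gamma$ is finite); together with the fact that~$\uag$ acts transitively on markings of a fixed~$\Gamma$-complex up to equivalence, this produces only finitely many~$\uag$-orbits of cubes in~$\spine$.
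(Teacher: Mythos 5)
This statement is not proved in the paper at all: it is quoted verbatim from Charney--Stambaugh--Vogtmann \cite{CharneyStambaughVogtmann17} and used as an imported black box, so there is no internal proof to compare your proposal against. Judged as a sketch of the external proof, your treatment of properness and cocompactness is essentially the standard argument and is sound: vertex stabilisers inject into the finite automorphism group of a finite cube complex, and finiteness of the set of compatible collections~$\Pi$ together with transitivity on markings gives finitely many orbits of cubes.

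The genuine gap is contractibility, which you correctly flag but do not close. Moreover, the first route you propose (prove~$\Sigma_\Gamma$ contractible by a norm/gradient-flow argument and then retract onto~$\spine$) inverts the logical order actually used in \cite{CharneyStambaughVogtmann17}: there the spine~$\spine$ is proved contractible first, by a direct combinatorial argument --- one orders marked Salvettis by a norm given by the lengths of a finite list of conjugacy classes, builds~$\spine$ as an increasing union of stars of marked Salvettis, and uses a peak-reduction theorem for~$\Gamma$-Whitehead automorphisms to show each new star meets the previous union in a contractible set --- and only afterwards is~$\Sigma_\Gamma$ shown to deformation retract onto~$\spine$. The peak-reduction input (the analogue of Whitehead's algorithm for~$\raag$) is the substantive missing ingredient in your outline; neither of your two suggested strategies supplies it, and your ``Quillen's Theorem~A'' alternative would still need descending links to be analysed via exactly this kind of combinatorial lemma. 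Since the present paper only cites the theorem, none of this affects the paper, but as a standalone proof your proposal is incomplete at precisely this point.
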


The geometric control afforded us by \emph{Theorem \ref{thm:vcd upper bound realisation theorem}} yields the immediate corollary
\[\vcduag \leq \dim(\spine).\]

\noindent Observe that the dimension of~$\spine$ is precisely the size of the largest set of pairwise compatible~$\Gamma$-partitions. 

\subsection{Bounds on~$\vcduag$}\label{subsec:bounds on vcd of untwisted subgp}

A thorough analysis of large abelian subgroups of~$\uag$ is made by Millard and Vogtmann in \cite{MillardVogtmannCubeComplexes21}, which provides lower bounds on the virtual cohomological dimension of~$\uag$.

We say that a~$\Gamma$-partition is \emph{principal} if some (hence any) base of it is a principal vertex; define \emph{non-principal partitions} analogously.

Recall (\textit{Definition \ref{def:principal vertex}}) that a vertex~$v$ is principal if there is no other vertex~$w$ with~$v <_\circ w$; that is, with~$\link{v} \subsetneq \link{w}$.

\begin{notation}
    Let~$W \subseteq V(\Gamma)$ be a subset of the vertices of~$\Gamma$. Write~$M(W)$ for the size of the largest set of pairwise compatible~$\Gamma$-Whitehead partitions, such that every partition in the set may be based at a vertex in~$W$. Write~$V = V(\Gamma)$ and let~$L$ denote the set of principal vertices of~$\Gamma$.
\end{notation}

\begin{definition}\label{def:principal rank}
    We call~$M(L)$ the \emph{principal rank} of~$\Gamma$.
\end{definition}

In particular,~$M(V)$ is the maximal size of a set of pairwise compatible~$\Gamma$-partitions, so we have~$\dim(\spine) = M(V)$. Meanwhile,~$M(L)$ is the largest possible dimension of a cube in~$\spine$ whose top vertex corresponds to blowups of the Salvetti complex exclusively by principal partitions.

\begin{theorem}[\cite{MillardVogtmannCubeComplexes21}, Theorem 4.25]\label{thm:Millard-Vogtmann M(L) lower bound}
    The untwisted subgroup~$\uag$ contains a free abelian subgroup of rank~$M(L)$.
\end{theorem}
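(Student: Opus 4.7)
My approach is to explicitly construct a rank-$M(L)$ free abelian subgroup of $U(A_\Gamma)$ from the data of a maximal compatible set of principal partitions. Fix a compatible set $\Pi=\{\calP_1,\dots,\calP_k\}$ with $k=M(L)$ such that every $\calP_i$ admits a principal base $m_i \in L$. For each $i$ choose such an $m_i$ and form the $\Gamma$-Whitehead automorphism $\varphi_i := \varphi(\calP_i, m_i)$, which lies in $U(A_\Gamma)$ since it is a product of folds and partial conjugations. Let $\overline{\varphi}_i$ denote its class in $\mathrm{Out}(A_\Gamma)$. The goal is to show $\langle \overline{\varphi}_1,\dots,\overline{\varphi}_k\rangle \cong \mathbb{Z}^k$.

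\textbf{Step 1 (commutativity in $\mathrm{Out}$).} Pairwise compatibility of the $\calP_i$ assembles them into a $k$-dimensional cube $c(\emptyset,\Pi;\alpha)$ of $\spine$ for any marked Salvetti $(\sal,\alpha)$. For any pair $i<j$, the two ways of traversing the $2$-face $c(\emptyset,\{\calP_i,\calP_j\};\alpha)$ correspond (via the identification of marked Salvettis along opposite vertices of each such square with automorphism classes) to the compositions $\overline{\varphi}_i \overline{\varphi}_j$ and $\overline{\varphi}_j \overline{\varphi}_i$. Since both paths terminate at the same vertex of $\spine$, these elements agree in $\mathrm{Out}(A_\Gamma)$. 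Hence the $\overline{\varphi}_i$ pairwise commute, and the map $\mathbb{Z}^k \to \mathrm{Out}(A_\Gamma)$ sending the $i$-th standard basis vector to $\overline{\varphi}_i$ is a well-defined homomorphism.

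\textbf{Step 2 (injectivity).} This is the main obstacle. I would split the $\varphi_i$ according to whether $\splt{\calP_i}$ contains a vertex not adjacent to $m_i$: call these \emph{fold-type}, and the remainder (where $\splt{\calP_i}\subseteq \str{m_i}$, so $\varphi_i$ is essentially a partial conjugation by $m_i$) \emph{conjugation-type}. On the abelianization $A_\Gamma^{\mathrm{ab}}\cong \mathbb{Z}^n$, each fold-type $\varphi_i$ induces an elementary transvection matrix supported on the pairs $(v,m_i)$ with $v\in \splt{\calP_i}$ non-adjacent to $m_i$; principality of $m_i$ ensures these matrices are non-trivial and not absorbed by some transvection based at a vertex $w$ dominating $m_i$. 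A monomial in the $\overline{\varphi}_i$ that maps to the identity in $\mathrm{GL}_n(\mathbb{Z})$ must therefore have trivial exponent on every fold-type $\varphi_i$. The remaining conjugation-type Whitehead automorphisms generate a free abelian subgroup of $\mathrm{Out}(A_\Gamma)$ of the appropriate rank: each is determined by a vertex $m_i$ and a union of components of $\Gamma\setminus\str{m_i}$, and for compatible such partitions with distinct component data this independence can be detected on the level of outer action by tracking how iterated powers conjugate fixed representatives in each component.

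\textbf{Main obstacle.} The geometric commutation in Step 1 is a direct payoff of the cube-complex structure of $\spine$ and is essentially automatic. The serious work is Step 2, proving genuine linear independence rather than mere commutativity. The principality hypothesis is what prevents cancellation: without it, a non-principal $m_i$ with $m_i<_\circ w$ could produce a Whitehead automorphism whose abelianization is already realised by transvections based at $w$, leaving the conjugation-type piece to do all the work and making the independence argument delicate. I would expect the cleanest organisation to be to exhibit the subgroup as the image of an explicit map from $\mathbb{Z}^k$ constructed from the combinatorics of $\Pi$, and to detect injectivity via a combination of the abelianization homomorphism and a fixed-component tracking argument, as sketched above.
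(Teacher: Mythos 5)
First, a point of order: the paper does not actually prove this statement---it is imported verbatim from \cite{MillardVogtmann2019}---so I am measuring your sketch against the argument in that reference rather than against anything in this document.

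Your Step 1 contains a genuine error. Compatibility of $\calP_i$ and $\calP_j$ does \emph{not} imply that $\varphi(\calP_i,m_i)$ and $\varphi(\calP_j,m_j)$ commute in $\operatorname{Out}(\raag)$ for an arbitrary choice of principal bases. Take $\Gamma$ to be three isolated vertices, so $\raag=F_3=\langle x,y,z\rangle$ and every vertex is principal. The partitions $\calP=\left(\{x,y,z\}\mid\{x^{-1},y^{-1},z^{-1}\}\right)$ and $\calQ=\left(\{x,y\}\mid\{x^{-1},y^{-1},z,z^{-1}\}\right)$ are compatible (the $x^{-1}$-side of $\calP$ is disjoint from $\{x,y\}$), yet $\varphi(\calP,x)\colon y\mapsto yx^{-1},\ z\mapsto zx^{-1}$ and $\varphi(\calQ,y)\colon x\mapsto xy^{-1}$ abelianise to the transvections $y\mapsto y-x,\ z\mapsto z-x$ and $x\mapsto x-y$, which already fail to commute in $\operatorname{GL}_3(\mathbb{Z})$; since inner automorphisms die in the abelianisation, the outer classes do not commute either. (If instead both partitions are based at $x$, the two automorphisms do commute.) So commutativity depends on a coherent choice of bases across the whole compatible set, and proving that such a choice exists---and organising the compatible set so that it does---is a substantive part of Millard--Vogtmann's argument, not a formality that falls out of the cube structure. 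The geometric justification you give is also off: the square $c(\emptyset,\{\calP_i,\calP_j\};\alpha)$ has exactly one marked-Salvetti vertex, and its edges are hyperplane collapses rather than automorphisms, so ``the two ways of traversing the 2-face'' do not compose to $\overline{\varphi}_i\overline{\varphi}_j$ and $\overline{\varphi}_j\overline{\varphi}_i$; commutativity has to be extracted from commuting isomorphisms of the double blowup $\salb{\{\calP_i,\calP_j\}}$ or verified algebraically.

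Step 2 is closer in spirit to what Millard--Vogtmann do (independence is detected partly on the abelianisation and partly on the action on conjugacy classes), but as written it is a plan rather than a proof: the independence of the ``conjugation-type'' generators via ``tracking fixed representatives'' is precisely the delicate point and is not carried out. A further warning that your dictionary is miscalibrated: the standard rank-$(2n-3)$ abelian subgroup of $\operatorname{Out}(F_n)$ quoted in \S2.5 of the paper is generated by Whitehead automorphisms whose underlying partitions are pairwise \emph{incompatible} (all four quadrants of $\{x_1,x_i^{-1}\}$ against $\{x_1,x_j^{-1}\}$ are nonempty), so ``compatible set of partitions'' and ``commuting set of Whitehead automorphisms'' are genuinely different notions, related only after the careful base-selection your sketch omits.
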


This provides a lower bound on~$\vcduag$. Combined with the upper bound obtained above, we have the following.

\begin{corollary}\label{corollary a priori bounds on vcd}
    We have~$M(L) \leq \vcduag \leq \dim(\spine) = M(V).$
\end{corollary}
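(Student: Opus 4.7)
The corollary is essentially an assembly of three ingredients already in hand, so my plan is to write out each of the three inequalities and verify them in turn.

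First I would deal with the lower bound $M(L) \le \vcduag$. The standard fact from group cohomology (see, e.g., \cite{BrownCohomologyofGroups}) is that the cohomological dimension of a torsion-free group is an upper bound for the rank of any free abelian subgroup, so the same holds virtually: $\textsc{vcd}(G) \ge \operatorname{rank}(A)$ for any free abelian subgroup $A$ of a virtually torsion-free group $G$. \emph{Theorem \ref{thm:Millard-Vogtmann M(L) lower bound}} supplies a free abelian subgroup of $\uag$ of rank $M(L)$, and \cite{CharneyVogtmannFinitenessProperties09} ensures that $\uag$ is virtually torsion-free with finite \textsc{vcd}, so the inequality follows at once.

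Next I would establish the upper bound $\vcduag \le \dim(\spine)$ by applying \emph{Theorem \ref{thm:vcd upper bound realisation theorem}} to the action of $\uag$ on $\spine$. To do so I need to verify the hypotheses: $\spine$ is a contractible CW-complex on which $\uag$ acts properly and cocompactly (all by \emph{Theorem \ref{thm:Charney-Stambaugh-Vogtmann main theorem}}), and $\spine$ is proper because it is a locally finite cube complex---local finiteness follows from the fact that any vertex $(\sal, \alpha)$ of $\spine$ lies in only finitely many cubes, since cubes in the star of a marked Salvetti are indexed by the finite collection of compatible sets of $\Gamma$-Whitehead partitions.

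Finally I would verify the equality $\dim(\spine) = M(V)$. By the cube complex structure described following \emph{Definition \ref{def:untwisted spine}}, every cube in $\spine$ is of the form $c(\Pi_1, \Pi_2; \alpha)$ for a pairwise compatible set of $\Gamma$-partitions $\Pi_2$ and a subset $\Pi_1 \subseteq \Pi_2$; its dimension equals $|\Pi_2| - |\Pi_1|$. The maximum is therefore achieved by taking $\Pi_1 = \emptyset$ and $\Pi_2$ a compatible set of maximum size, which is $M(V)$ by definition. Conversely, any compatible set $\Pi$ of size $k$ produces a $k$-cube $c(\emptyset, \Pi; \alpha)$ in $\spine$, so $\dim(\spine) \ge M(V)$ as well.

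There is no substantive obstacle: the only point requiring any care is the properness of $\spine$ needed to invoke \emph{Theorem \ref{thm:vcd upper bound realisation theorem}}, which is immediate from local finiteness.
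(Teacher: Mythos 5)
Your proposal is correct and follows essentially the same route as the paper: the lower bound comes from the rank-$M(L)$ free abelian subgroup of \emph{Theorem \ref{thm:Millard-Vogtmann M(L) lower bound}}, the upper bound from applying \emph{Theorem \ref{thm:vcd upper bound realisation theorem}} to the proper cocompact action on the contractible spine, and the equality $\dim(\spine) = M(V)$ from the cube-complex description of $\spine$. The paper assembles exactly these three ingredients in the surrounding text, so there is nothing to add.
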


In fact, Millard and Vogtmann go further; they are able to prove that any abelian subgroup generated by~$\Gamma$-Whitehead automorphisms has rank at most~$M(L)$. Hence, if there is some larger-rank free abelian subgroup of~$\uag$, then it is not generated by elements of this natural class. We will see later (\S\ref{subsec:rakes}) that there are graphs~$\Gamma$ for which~$M(V)$ is strictly larger than~$M(L)$. Thus in these cases, as discussed after \emph{Remark \ref{rem:dichotomy about failure of bounds to match}}, either these natural large-rank free abelian subgroups generated by~$\Gamma$-Whitehead automorphisms fail to realise~$\vcduag$, or else the dimension of the untwisted spine~$\spine$ fails to realise~$\vcduag$. 

\section{The retraction process}\label{sec:retraction process}

Fix a finite simplicial graph~$\Gamma$. We now outline a process for performing a~$\uag$-equivariant deformation retraction of~$\spine$, obtaining a new cube complex which has a proper and cocompact action of~$\uag$. Since the only complexes we are considering from here on are the untwisted spine~$\spine$ and retractions of it, when we say `equivariant' we will always mean with respect to the action of the untwisted subgroup~$\uag$.

Henceforth,~$\calP$ will always refer to a principal partition, and~$\calQ$ will always refer to a non-principal partition. We will also frequently use~$m$ and~$n$ to denote a principal vertex, while~$u$ will usually refer to a non-principal vertex. We indicate this rule of thumb to alleviate the burden of some of the notation later; nonetheless we will endeavour to make notation as clear as possible at every point.

In \S\ref{subsec:hugging partitions}, we make our key definition, that of \emph{hugging partitions}. The motivation here is that if a non-principal partition~$\calQ$ is hugged by principal partitions with which it is compatible, then it is very `close' to these principal partitions. Then compatibility of other partitions with~$\calQ$ is determined by their compatibility with the principal partitions which hug~$\calQ$. This is the intuitive notion of `redundancy' to which was referred in the introduction. 

We make this intuition of `redundancy' precise in our key lemmas (\emph{\ref{lem:condition 1 (NEW)} \& \ref{lem:condition 2 (NEW)}}), which then enable us to perform a~$\uag$-equivariant retraction of~$\spine$ in \S\ref{subsec:retraction process}. The key lemmas require certain graph theoretic conditions (\emph{Definitions \ref{def:condition 1 (NEW)} \& \ref{def:condition 2 (NEW)}}), which means we can only apply our retraction process when~$\Gamma$ satisfies these conditions. There is therefore scope to strengthen this technique by improving the notion of `redundancy' to something finer than `hugging'. While we have some partial results in this direction, we have decided that `hugging', presented in \emph{Definition \ref{def:2-hugged in Pi}}, is the best balance between clarity and strength for this document.

\begin{notation}
    For a vertex~$v \in V(\Gamma)$, write~$\calC(v)$ for the set of connected components of~$\Gamma^\pm \setminus \str{v}^\pm$.
\end{notation}

\subsection{Hugging partitions}\label{subsec:hugging partitions}

We now lay the groundwork for our key definition, that of \emph{hugging partitions} (\emph{\ref{def:2-hugged in Pi}}). 

Let~$\calQ$ be a non-principal partition, based at the vertex~$u$. Let~$m >_\circ u$ be a principal vertex, and let~$C \in \calC(u)$ be the component which contains~$m$. Since~$m$ dominates~$u$,~$\calC(m)$ is the union of:
\begin{enumerate}[(i)]
    \item every element of~$\calC(u) \setminus C$;
    \item \noindent$\{u\}$ and~$\{u^{-1}\}$;
    \item those components of~$\Gamma^\pm \setminus \str{m}^\pm$ which are entirely contained in~$C$.
\end{enumerate}
Hence,~$\calQ$ has the following structure:
\begin{itemize}
    \item One side, say~$Q$, contains~$C$ -- and so contains all elements in~$\calC(m)$ of type (iii), as well as~$\{m, m^{-1}\} \cup \left(\link{m}^\pm \setminus \link{u}^\pm\right)$. The rest of the elements of~$Q$ (apart from the base element~$u$ or~$u^{-1}$) are contained in elements of type (i) -- that is, the rest of~$Q$ is a union of elements of~$\calC(u)$.
    \item The other side,~$\overline{Q}$, consists of the remainder of components of type (i) (as well as the other base element~$u$ or~$u^{-1}$). 
\end{itemize}

Write~$\calC^Q(m)$ for those elements of~$\calC(m)$ contained in~$Q$. Partition~$\calC^Q(m)$ into two sets,~$\calC_1$ and~$\calC_2$, and define two partitions~$\calP_1$,~$\calP_2$ by the following sides:
\begin{itemize}
    \item~$P_1 \coloneqq \{m, \calC_1\}$;
    \item~$P_2 \coloneqq \{\overline{m}, \calC_2\}$.
\end{itemize}

If one of these sets~$\calC_i$ is empty, then this does not define a valid partition~$\calP_i$. However, it will be technically convenient for us to ignore this, as we will be arguing entirely combinatorially, and temporarily allow these `almost'-$\Gamma$-Whitehead partitions. We will consistently indicate any juncture at which it is important to distinguish between valid~$\Gamma$-partitions and `almost'-$\Gamma$-partitions. 

Note that~$P_1 \cap P_2 = \emptyset$, so~$\calP_1$ and~$\calP_2$ are compatible with each other; moreover, we have~$P_1 \subseteq Q$ and~$P_2 \subseteq Q$, so both~$\calP_1$ and~$\calP_2$ are compatible with~$\calQ$.

We say that~$\calQ$ is \emph{2-hugged} by~$\calP_1$ and~$\calP_2$. (If, say,~$\calC_1$ was empty, so we ignored~$\calP_1$, then we may say that~$\calQ$ is \emph{1-hugged} (by~$\calP_2)$.)

\begin{definition}\label{def:2-hugged in Pi}
    Let~$\Pi$ be a set of pairwise compatible partitions, and let~$\calQ \in \Pi$ be non-principal. We say that~$\calQ$ is \emph{2-hugged in~$\Pi$} if there exist~$\calP_1, \calP_2 \in \Pi$ which 2-hug~$\calQ$, as above. 
    
    (We will say that~$\calQ$ is \emph{1-hugged in~$\Pi$} if there exists~$\calP \in \Pi$ which 1-hugs~$\calQ$ as above.)

    We will use the term \emph{hugged in~$\Pi$} to mean either 1- or 2-hugged in~$\Pi$, when it is not important to distinguish between these two cases.
\end{definition}

The motivation for this definition is that if~$\calQ$ is hugged in~$\Pi$, then the side~$Q$ is very `close' to the union~$P_1 \cup P_2$ (and indeed, the intersection~$\overline{P_1} \cap \overline{P_2}$ is precisely equal to~$\overline{Q}$). Under certain conditions on~$\Gamma$, this will allow us to conclude that~$\calQ$ is in some sense `redundant' in~$\Pi$. This notion of redundancy is what drives the retraction process. To accustom ourselves to this definition, and the intuition of `closeness', we provide some examples now.

\begin{example}\label{eg:hugging partitions}
    \begin{enumerate}[(a)]
        \item Consider the following graph:
        \begin{figure}[H]
            \begin{center}
                \begin{tikzpicture}[scale = 1.5]
                    %
                    %The graph T_3
                    %
                    \fill (-1, 0) circle (1.3pt);
                    \fill (0, 0) circle (1.3pt);
                    \fill (1, 0.7) circle (1.3pt);
                    \fill (1, 0) circle (1.3pt);
                    \fill (1, -0.7) circle (1.3pt);
                    \fill (2, 0.7) circle (1.3pt);
                    \fill (2, 0) circle (1.3pt);
                    \fill (2, -0.7) circle (1.3pt);
                    \draw[black, thick] (-1, 0) -- (0, 0);
                    \draw[black, thick] (0, 0) -- (1, 0.7);
                    \draw[black, thick] (0, 0) -- (1, 0);
                    \draw[black, thick] (0, 0) -- (1, -0.7);
                    \draw[black, thick] (1, 0.7) -- (2, 0.7);
                    \draw[black, thick] (1, 0) -- (2, 0);
                    \draw[black, thick] (1, -0.7) -- (2, -0.7);
                    \node at (-1, 0) [above = 0pt, black, thick]{\small$u$};
                    \node at (0, 0) [above = 0pt, black, thick]{\small$v$};
                    \node at (1, 0.7) [above = 0pt, black, thick]{\small$a_1$};
                    \node at (1, 0) [above = 0pt, black, thick]{\small$a_2$};
                    \node at (1, -0.7) [below = 0pt, black, thick]{\small$a_3$};
                    \node at (2, 0.7) [above = 0pt, black, thick]{\small$b_1$};
                    \node at (2, 0) [above = 0pt, black, thick]{\small$b_2$};
                    \node at (2, -0.7) [below = 0pt, black, thick]{\small$b_3$};
                \end{tikzpicture}
            \end{center}
        \end{figure}

        and the partitions

        \begin{figure}[H]
            \begin{center}
                \begin{tikzpicture}
                    %
                    %The partitions \calP and \calQ
                    %
                    \node at (1.7, -1.4) [black, thick]{$\calQ = \left( \{u, a_1, a_1^{-1}, b_1, b_1^{-1}, a_2, a_2^{-1}, b_2, b_2^{-1}\} \;\;|\;\; \{u^{-1}, a_3, a_3^{-1}, b_3, b_3^{-1}\} \;\;|\;\; \{v, v^{-1}\} \right)$};
                    \node at (1.7, -2.1) [black, thick]{$\calP = \left( \{a_2, u, a_1, a_1^{-1}, b_1, b_1^{-1}\} \;\;|\;\; \{a_2^{-1}, u^{-1}, a_3, a_3^{-1}, b_3, b_3^{-1}\} \;\;|\;\; \{v, v^{-1}, b_2, b_2^{-1}\} \right)$};
                    %
                    %Diagram of the positive sides of the partitions
                    %
                    %Q:
                    \draw[black, thick, rounded corners, densely dotted, fill = black!10!white] 
                    (-0.1, -2.9) to (3.8, -2.9) to (3.8, -5.25) to (-0.5, -5.25) to (-0.5, -3.9) to (-1.7, -3.9) to (-1.7, -2.9) to (-0.1, -2.9);
                    %P:
                    \draw[black, thick, rounded corners, densely dotted, fill = black!20!white] 
                    (-0.1, -3.05) to (2.7, -3.05) to (2.7, -3.8) to (1.5, -3.8) to (1.5, -5.1) to (-0.35, -5.1) to (-0.35, -3.8) to (-1.55, -3.8) to (-1.55, -3.05) to (-0.1, -3.05);
                    \fill (-2.5, -3.3) circle (2pt);
                    \fill (-1.3, -3.3) circle (2pt);
                    \fill (-0.1, -3.3) circle (2pt);
                    \fill (1.1, -3.3) circle (2pt);
                    \fill (2.3, -3.3) circle (2pt);
                    \fill (3.5, -3.3) circle (2pt);
                    \fill (4.7, -3.3) circle (2pt);
                    \fill (5.9, -3.3) circle (2pt);
                    \fill (-2.5, -4.8) circle (2pt);
                    \fill (-1.3, -4.8) circle (2pt);
                    \fill (-0.1, -4.8) circle (2pt);
                    \fill (1.1, -4.8) circle (2pt);
                    \fill (2.3, -4.8) circle (2pt);
                    \fill (3.5, -4.8) circle (2pt);
                    \fill (4.7, -4.8) circle (2pt);
                    \fill (5.9, -4.8) circle (2pt);
                    \node at (-2.5, -3.83) [above = 1pt, black, thick]{$v$};
                    \node at (-1.3, -3.83) [above = 1pt, black, thick]{$u$};
                    \node at (-0.1, -3.9) [above = 1pt, black, thick]{$a_1$};
                    \node at (1.1, -3.9) [above = 1pt, black, thick]{$b_1$};
                    \node at (2.3, -3.9) [above = 1pt, black, thick]{$a_2$};
                    \node at (3.5, -3.9) [above = 1pt, black, thick]{$b_2$};
                    \node at (4.7, -3.9) [above = 1pt, black, thick]{$a_3$};
                    \node at (5.9, -3.9) [above = 1pt, black, thick]{$b_3$};
                    \node at (0.5, -4.85) [black, thick]{$P$};
                    \node at (2.9, -5.05) [black, thick]{$Q$};
                \end{tikzpicture}
                \label{fig:example of 1-hugging}
            \end{center}
        \end{figure}
        Here the set~$\{\calP\}$ 1-hugs~$Q$. We have~$a_2 >_\circ u$, and we have partitioned~$\calC^Q(a_2)$ as follows:~$\calC_1 = \{\{u\}, \{a_1, a_1^{-1}, b_1, b_1^{-1}\}\}$ and~$\calC_2 = \emptyset$. 
        \item With the same~$\Gamma$ and non-principal partition~$\calQ$, we present an example of 2-hugging:
        \begin{figure}[H]
            \begin{center}
                \begin{tikzpicture}
                    %
                    %The partitions \calP_1 and \calP_2
                    %
                    \node at (1.7, -1.4) [black, thick]{$\calP_1 = \left( \{a_1, u\} \;\;|\;\; \{a_1^{-1}, u^{-1}, a_2, a_2^{-1}, b_2, b_2^{-1}, a_3, a_3^{-1}, b_3, b_3^{-1}\} \;\;|\;\; \{v, v^{-1}, b_1, b_1^{-1}\} \right)$};
                    \node at (1.7, -2.1) [black, thick]{$\calP_2 = \left( \{a_1^{-1}, a_2, a_2^{-1}, b_2, b_2^{-1}\} \;\;|\;\; \{a_1, u, u^{-1}, a_3, a_3^{-1}, b_3, b_3^{-1}\} \;\;|\;\; \{v, v^{-1}, b_1, b_1^{-1}\} \right)$};
                    %
                    %Diagram of the positive sides of the partitions
                    %
                    %Q:
                    \draw[black, thick, rounded corners, densely dotted, fill = black!10!white] 
                    (-0.1, -2.9) to (3.8, -2.9) to (3.8, -5.25) to (-0.5, -5.25) to (-0.5, -3.9) to (-1.7, -3.9) to (-1.7, -2.9) to (-0.1, -2.9);
                    %P_1:
                    \draw[black, thick, rounded corners, densely dotted, fill = black!20!white] (-0.1, -3.05) to (0.2, -3.05) to (0.2, -3.8) to (-1.55, -3.8) to (-1.55, -3.05) to (-0.1, -3.05);
                    %
                    %P_2:
                    \draw[black, thick, rounded corners, densely dotted, fill = black!20!white] (2.3, -3.05) to (3.7, -3.05) to (3.7, -5.1) to (2, -5.1) to (2, -4.6) to (0.4, -4.6) to (0.4, -5.1) to (-0.35, -5.1) to (-0.35, -4.4) to (2, -4.4) to (2, -3.05) to (2.3, -3.05);
                    %(-0.1, -3.05) to (2.7, -3.05) to (2.7, -3.8) to (1.5, -3.8) to (1.5, -5.1) to (-0.35, -5.1) to (-0.35, -3.8) to (-1.55, -3.8) to (-1.55, -3.05) to (-0.1, -3.05);
                    %
                    \fill (-2.5, -3.3) circle (2pt);
                    \fill (-1.3, -3.3) circle (2pt);
                    \fill (-0.1, -3.3) circle (2pt);
                    \fill (1.1, -3.3) circle (2pt);
                    \fill (2.3, -3.3) circle (2pt);
                    \fill (3.5, -3.3) circle (2pt);
                    \fill (4.7, -3.3) circle (2pt);
                    \fill (5.9, -3.3) circle (2pt);
                    \fill (-2.5, -4.8) circle (2pt);
                    \fill (-1.3, -4.8) circle (2pt);
                    \fill (-0.1, -4.8) circle (2pt);
                    \fill (1.1, -4.8) circle (2pt);
                    \fill (2.3, -4.8) circle (2pt);
                    \fill (3.5, -4.8) circle (2pt);
                    \fill (4.7, -4.8) circle (2pt);
                    \fill (5.9, -4.8) circle (2pt);
                    \node at (-2.5, -3.83) [above = 1pt, black, thick]{$v$};
                    \node at (-1.3, -3.83) [above = 1pt, black, thick]{$u$};
                    \node at (-0.1, -3.9) [above = 1pt, black, thick]{$a_1$};
                    \node at (1.1, -3.9) [above = 1pt, black, thick]{$b_1$};
                    \node at (2.3, -3.9) [above = 1pt, black, thick]{$a_2$};
                    \node at (3.5, -3.9) [above = 1pt, black, thick]{$b_2$};
                    \node at (4.7, -3.9) [above = 1pt, black, thick]{$a_3$};
                    \node at (5.9, -3.9) [above = 1pt, black, thick]{$b_3$};
                    \node at (-0.7, -3.6) [black, thick]{$P_1$};
                    \node at (1.4, -4) [black, thick]{$Q$};
                    \node at (2.9, -4.9) [black, thick]{$P_2$};
                \end{tikzpicture}
                \label{fig:example of 2-hugging}
            \end{center}
        \end{figure}
        Here,~$\calP_1$ and~$\calP_2$ are both based at~$a_1 >_\circ u$, and we have partitioned~$\calC^Q(a_1)$ into~$\calC_1 = \{\{u\}\}$ and~$\calC_2 = \{\{a_2, a_2^{-1}, b_2, b_2^{-1}\}\}$. 
    \end{enumerate}
\end{example}

\subsection{Preparatory lemmas}\label{subsec:preparatory lemmas}

In the next subsection \S\ref{subsec:key lemmas}, we will prove two key lemmas (\emph{Lemma \ref{lem:condition 1 (NEW)}} \& \emph{Lemma \ref{lem:condition 2 (NEW)}}) which will allow us to perform an equivariant retraction of~$\spine$ (under certain conditions). In this subsection, we collect some statements we will find useful.

Recall that for a subset of vertices~$W \subseteq V(\Gamma)$,~$M(W)$ denotes the maximum possible size of a compatible set of~$\Gamma$-partitions. 

\begin{lemma}[\cite{MillardVogtmannCubeComplexes21}, \textit{Lemma 5.1}]\label{lem:MV19, 5.1}
    If non-equivalent vertices~$u, v \in V$ have~$d_\Gamma(u, v) \neq 2$, then any partition based at~$u$ is compatible with any partition based at~$v$. In particular, \[M(\{u, v\}) = M(\{u\}) + M(\{v\}).\]
\end{lemma}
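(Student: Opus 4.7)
The plan is to split the proof according to whether $d_\Gamma(u,v) = 1$ or $d_\Gamma(u,v) \geq 3$, verifying in each case one of the two clauses of the compatibility condition from Definition~\ref{def:compatibility}. Suppose first that $u$ and $v$ are adjacent, and let $\calP$ be based at $u$ and $\calQ$ at $v$. I would show $\calP$ and $\calQ$ are adjacent in the sense of Definition~\ref{def:adjacency}: for any $u' \in \mx{\calP}$, the discussion following Definition~\ref{def:Gamma-Whitehead partition} gives $u' \sim u$, so $\link{u} \subseteq \str{u'}$. Since $v \in \link{u}$ and the hypothesis $u \not\sim v$ forces $v \neq u'$, we conclude $v \in \link{u'}$, i.e.\ $u' \in \link{v}$. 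Hence $\mx{\calP} \subseteq V(\link{v}^\pm) = \link{\calQ}$, as required.

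Next, suppose $d_\Gamma(u,v) \geq 3$ (possibly infinite). The first observation is that $\str{u} \cap \str{v} = \emptyset$, since a common vertex would give a path of length at most $2$ between $u$ and $v$. Let $C_v$ denote the component of $\Gamma \setminus \str{u}$ containing $v$; since $\link{v} \subseteq V \setminus \str{u}$ is connected to $v$, we have $\str{v} \subseteq \{v\} \cup C_v$. By the last bullet of Definition~\ref{def:Gamma-Whitehead partition}, the set $C_v^\pm$ lies on a single side of $\calP$; after relabeling, call this side $P$, and similarly arrange $C_u^\pm \subseteq Q$, where $C_u$ is the component of $\Gamma \setminus \str{v}$ containing $u$. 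The central combinatorial claim is that every component $C \neq C_v$ of $\Gamma \setminus \str{u}$ satisfies $C \subseteq C_u$: indeed $C \cap \link{v} \subseteq C \cap C_v = \emptyset$, so $C \cap \str{v} = \emptyset$, and any path in $\Gamma$ from $C$ to $u$ must enter $\str{u}$ via some vertex of $\link{u} \subseteq V \setminus \str{v}$, yielding a path in $\Gamma \setminus \str{v}$ from $C$ to $u$. Consequently $\ol{P}$, which consists of $\{u^{-1}\}$ together with a union of components $C^\pm$ for $C \neq C_v$, lies entirely inside $C_u^\pm \subseteq Q$, so $\ol{P} \cap \ol{Q} = \emptyset$ and compatibility is established.

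I expect the main obstacle to be precisely this combinatorial step in the far case---that the non-$C_v$ components of $\Gamma \setminus \str{u}$ are all pushed inside $C_u$. This is where the exclusion $d_\Gamma(u,v) \neq 2$ bites: if $d_\Gamma(u,v) = 2$ then $\str{u}$ and $\str{v}$ share a vertex, a connecting path from $C$ to $u$ need not stay outside $\str{v}$, and the conclusion can fail. The ``in particular'' statement then follows: the union of any pairwise-compatible collection of size $M(\{u\})$ based at $u$ with one of size $M(\{v\})$ based at $v$ is again pairwise-compatible by the first part, so $M(\{u,v\}) \geq M(\{u\}) + M(\{v\})$; the reverse inequality is tautological, since every partition in the collection realising $M(\{u,v\})$ is based at either $u$ or $v$.
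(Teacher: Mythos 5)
The paper gives no proof of this lemma (it is quoted from Millard--Vogtmann), so there is nothing internal to compare against; your two-case split is the standard argument, and both cases are essentially correct \emph{when $\Gamma$ is connected}. The adjacent case is fine, and in the far case the chain $\ol{P}\subseteq C_u^\pm\subseteq Q$ is the right skeleton. Two small imprecisions there are worth tightening: the last bullet of \emph{Definition \ref{def:Gamma-Whitehead partition}} only forces a component of $\Gamma\setminus\str{u}$ onto one side when it has at least two vertices (a singleton component $\{w\}$ may be split), so you should observe that $C_v\supseteq \str{v}$ has at least two vertices because $\link{v}\neq\emptyset$, and that $\ol{P}$ need not be a union of whole doubled components---it may contain $w$ but not $w^{-1}$ for such singletons. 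Neither affects the conclusion, since those singletons still lie in $C_u$, hence in $Q$, as whole pairs. Also, the final ``reverse inequality'' is not quite tautological: it uses that no partition can be based at both $u$ and $v$, which is precisely where the non-equivalence hypothesis enters.

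The one genuine issue is in your ``central combinatorial claim''. The argument that $C\subseteq C_u$ for every component $C\neq C_v$ of $\Gamma\setminus\str{u}$ presupposes that there is a path in $\Gamma$ from $C$ to $u$ at all, i.e.\ that $\Gamma$ is connected; yet your parenthetical ``(possibly infinite)'' advertises coverage of the disconnected case, where the claim---and the lemma as literally stated---fails. Take $\Gamma$ to be the path $u - x - y - v$ together with an isolated vertex $b$, and let $\calP$ have $u$-side $\{u,b\}$ and $\calQ$ have $v$-side $\{v,b\}$; both are valid thick partitions, since $\link{b}=\emptyset$ allows $b$ to be split by either. They are not adjacent ($u\notin\link{\calQ}$), and all four pairs of sides meet (witnessed by $b$, $u$, $v$ and $b^{-1}$ respectively), so they are incompatible even though $d_\Gamma(u,v)=3$; this reflects the fact that the transvections of $b$ by $u$ and by $v$ do not commute. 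So connectivity must be treated as a standing hypothesis (as it implicitly is in the sources), and your proof should either invoke it explicitly at this step or restrict the far case to $3\le d_\Gamma(u,v)<\infty$. With that caveat the truncation argument you sketch---cut a path from $c\in C$ at its first entry into $\str{u}$, note the initial segment stays in $C$ and the entry vertex lies in $\link{u}\subseteq V\setminus\str{v}$, then step directly to $u$---does establish $C\subseteq C_u$.
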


\begin{lemma}[\cite{CharneyStambaughVogtmannUntwistedOuterSpace17}, \textit{Lemma 3.4}]\label{lem:distance >= 2 implies containment of sides}
    Let~$\calR_1$,~$\calR_2$ be compatible partitions based at~$v_1$,~$v_2$ respectively. Suppose that~$d_\Gamma(v_1, v_2) \geq 2$. Then one side of~$\calR_1$ is contained in one side of~$\calR_2$. 
\end{lemma}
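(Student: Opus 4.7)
The plan is to use the hypothesis $d_\Gamma(v_1, v_2) \geq 2$ twice: first to rule out partition-adjacency and secure an empty intersection of sides from compatibility, and second to prevent that emptiness from being ``absorbed'' by $\link{\calR_2}$.

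Since $v_1$ and $v_2$ are non-adjacent in $\Gamma$ they do not commute, so neither base lies in the link of the other partition, and $\calR_1, \calR_2$ are not adjacent in the sense of \emph{Definition \ref{def:adjacency}}. By compatibility (\emph{Definition \ref{def:compatibility}}), some side of $\calR_1$ and some side of $\calR_2$ therefore have empty intersection. Writing $R_i^+, R_i^-$ for the two sides of $\calR_i$ with the convention $v_i \in R_i^+$, the lemma reduces to arranging signs $s_1, s_2$ for which both $R_1^{s_1} \cap R_2^{s_2} = \emptyset$ and $R_1^{s_1} \cap \link{\calR_2} = \emptyset$, since the conjunction gives $R_1^{s_1} \subseteq R_2^{-s_2}$.

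The key observation is the following: any $x \in \link{\calR_2} \setminus \link{\calR_1}$ corresponds to a vertex adjacent to $v_2$ in $\Gamma$ but not to $v_1$, so $x$ and $v_2$ share a non-singleton component of $\Gamma \setminus \str{v_1}$; the last clause of \emph{Definition \ref{def:Gamma-Whitehead partition}} then forces $x, x^{-1}, v_2, v_2^{-1}$ onto a common side of $\calR_1$. In the main case where $\calR_1$ does not split $v_2$, so $v_2, v_2^{-1} \in R_1^\alpha$ for some $\alpha$, this gives $\link{\calR_2} \cap R_1^{-\alpha} = \emptyset$. Moreover, $R_1^\alpha$ meets both sides of $\calR_2$ via $v_2 \in R_2^+$ and $v_2^{-1} \in R_2^-$, so the empty intersection from compatibility must take the form $R_1^{-\alpha} \cap R_2^\gamma = \emptyset$, and combining gives $R_1^{-\alpha} \subseteq R_2^{-\gamma}$.

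The main obstacle I anticipate is the subcase in which $\calR_1$ splits $v_2$: then $v_2$ is a singleton component of $\Gamma \setminus \str{v_1}$, which (together with $v_1 \notin \link{v_2}$) forces $\link{v_2} \subseteq \link{v_1}$ and hence $\link{\calR_2} \subseteq \link{\calR_1}$; the link of $\calR_2$ now meets neither side of $\calR_1$, so any empty intersection provided by compatibility passes directly to a side-containment. Although this subcase requires a separate clause, both it and the main case rest on the same structural control over how neighbours of $v_2$ sit inside $\str{v_1}$.
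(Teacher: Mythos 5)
Your proof is correct and follows essentially the same route as the paper's: both arguments rest on the observation that any element of $\link{v_2}^\pm$ not lying in $\link{v_1}^\pm$ shares a component of $\Gamma \setminus \str{v_1}$ with $v_2$ and hence lands on the $v_2$-containing side of $\calR_1$, so the opposite side of $\calR_1$ misses $\link{\calR_2}$ entirely and the empty intersection supplied by compatibility upgrades to a containment. The singleton-component subcase (where $\calR_1$ splits $v_2$ and $\link{v_2} \subseteq \link{v_1}$) is handled identically in the paper.
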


\begin{remark}\label{rem:proof of CSV17, 3.4}
    The next lemma appears in \cite{CharneyStambaughVogtmannUntwistedOuterSpace17} as \emph{Lemma 3.4}; we have rephrased the proof here for ease of comparison.
\end{remark}

\begin{proof}
    Since~$\calR_1$ and~$\calR_2$ are compatible and not adjacent, we know that there are sides~$R_1$,~$R_2$ of~$\calR_1$,~$\calR_2$ respectively such that~$R_1 \subseteq R_2 \cup \link{v_2}^\pm$.

    If~$v_2$ and~$v_2^{-1}$ are in separate components of~$\Gamma^\pm \setminus \str{v_1}^\pm$, then ~$\link{v_1} \supseteq \link{v_2}$ so~$R_1 \cap \link{v_2}^\pm = \emptyset$, and we're done. So we can suppose that~$\{v_2, v_2^{-1}\}$ is contained in one component~$C \in \calC(v_1)$.

    If~$R_1 \supseteq C$, then we have \[\{v_2, v_2^{-1}\} \subseteq C \subseteq R_1 \subseteq R_2 \cup \link{v_2}^\pm,\] which implies that~$R_2 \cap \overline{R_2} \neq \emptyset$, a contradiction. Hence~$C \subseteq \overline{R_1}$.

    If~$R_1 \cap \link{v_2}^\pm = \emptyset$, then we're done (as~$R_1 \subseteq R_2$), so suppose not; let~$x \in R_1 \cap \link{v_2}^\pm$. Since~$x \in R_1$, we must have~$d_\Gamma(x, v_1) \geq 2$. On the other hand,~$x$ is adjacent to~$v_2$, so~$x$ and~$v_2$ lie in the same element of~$\calC(v_1)$. Hence~$x \in C \subseteq \overline{R_1}$, a contradiction, as we assumed~$x \in R_1$. Hence~$R_1 \cap \link{v_2}^\pm = \emptyset$, and so~$R_1 \subseteq R_2$.
\end{proof}

Note that the statement of the previous lemma is symmetric; we can also conclude~$\overline{R_2} \subseteq \overline{R_1}$.

\begin{lemma}\label{lem:Lemma A, pg 87}
    Let~$\calQ$ be a non-principal partition, based at~$u$, and hugged by the set~$\{\calP_1, \calP_2\}$; suppose that the~$\calP_i$ are based at a vertex~$m$ in the component~$C \in \calC(u)$. Let~$\calR$, based at~$v$, be any partition. Suppose that~$\calR$ is compatible with both~$\calP_1$ and~$\calP_2$ but not with~$\calQ$. Then~$d_\Gamma(u, v) = 2$, and~$v \in C$.
\end{lemma}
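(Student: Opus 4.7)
The proof has two parts. For the distance, I would apply \emph{Lemma \ref{lem:MV19, 5.1}} to $\calR$ and $\calQ$: incompatibility forces either $u \sim v$ or $d_\Gamma(u, v) = 2$. The equivalence case (including $v = u$) can be ruled out by a direct argument analogous to the one below (using that $\calR$ and $\calQ$ would share the link $\link{u}^\pm$), leaving $d_\Gamma(u, v) = 2$. For the containment, I argue by contradiction: suppose $v \in C'$ for some $C' \in \calC(u) \setminus \{C\}$. Then $m \in C$, $v \in C'$, and neither is in $\str{u}$, so $m$ and $v$ lie in distinct components of $\Gamma \setminus \str{u}$, are hence non-adjacent, and $d_\Gamma(m, v) \geq 2$.

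By \emph{Lemma \ref{lem:distance >= 2 implies containment of sides}} applied to each pair $(\calR, \calP_i)$, some side of $\calR$ is contained in some side of each $\calP_i$. The key observation is the pair of structural inclusions
\[
P_1, P_2 \subseteq Q \quad \text{and} \quad \overline{P_1} \cap \overline{P_2} \subseteq \overline{Q},
\]
the first because $\calC_1, \calC_2 \subseteq \calC^Q(m)$, and the second because $\overline{P_1} \cap \overline{P_2}$ is precisely the union of the $\pm$-components of $\calC(m)$ lying in $\overline{Q}$. Hence any containment placing a side of $\calR$ inside $P_1$, $P_2$, or $\overline{P_1} \cap \overline{P_2}$ would put that side in $Q$ or $\overline{Q}$, contradicting the incompatibility of $\calR$ with $\calQ$. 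The only possibility left (after relabelling $R \leftrightarrow \overline{R}$ and $\calP_1 \leftrightarrow \calP_2$) is $R \subseteq \overline{P_1}$ and $\overline{R} \subseteq \overline{P_2}$.

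The hard part is ruling out this last case. Since $m \in P_1$ and $m^{-1} \in P_2$, and $m, m^{-1} \notin \link{\calR} = \link{v}^\pm$ (as $m$ is non-adjacent to $v$), we get $m \in \overline{R}$ and $m^{-1} \in R$, so $\calR$ splits $m$. If there were any $x \in \link{m} \setminus \str{v}$, the path $m$--$x$--$m^{-1}$ in $\Gamma^\pm \setminus \str{v}^\pm$ would place $m, m^{-1}$ in a common non-singleton $\pm$-component of $\calC(v)^\pm$ and force them onto the same side of $\calR$, a contradiction. Therefore $\link{m} \subseteq \str{v}$, and since $v \notin \link{m}$ this gives $\link{m} \subseteq \link{v}$; principality of $m$ upgrades this to $\link{m} = \link{v}$. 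Finally, $u <_\circ m$ supplies some $y \in \link{m} \setminus \link{u}$; such a $y$ lies in $V \setminus \str{u}$ and is adjacent to $m$, so $y \in C$, while $y \in \link{v}$ also makes $v$ adjacent to $y$, placing $v$ into the component $C$ of $\Gamma \setminus \str{u}$ and contradicting $v \in C' \neq C$.
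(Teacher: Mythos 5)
Your proposal is correct in substance, and its second half takes a slightly different (and arguably cleaner) route than the paper. The paper's proof of $v \in C$ first disposes of $d_\Gamma(v,m)=1$, then uses \emph{Lemma \ref{lem:distance >= 2 implies containment of sides}} to get a side $R \subseteq \overline{P_1}$, deduces $P_2 \subseteq R$, concludes that $\calR$ splits $m$, and then invokes the fact that a principal vertex can only be split by a partition based at an equivalent vertex, giving $v \sim m$ and hence $v \in C$. You reach the same pivot ($\calR$ splits $m$) via a more symmetric case analysis resting on the two structural facts $P_1, P_2 \subseteq Q$ and $\overline{P_1} \cap \overline{P_2} = \overline{Q}$, and you replace the appeal to ``only equivalent bases split a principal vertex'' with a direct derivation of $\link{m} \subseteq \link{v}$ from the component axiom in \emph{Definition \ref{def:Gamma-Whitehead partition}}; the endgame via $y \in \link{m} \setminus \link{u}$ is the same as the paper's. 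I checked the details of this part and they hold.

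The one place you are too quick is the equivalence case at the start. The paper devotes a full ``Claim: $u \neq v$'' paragraph to ruling out $\calR$ being based at $u$, whereas you defer to ``an argument analogous to the one below,'' and the reason you give in passing (that $\calR$ and $\calQ$ would share the link $\link{u}^\pm$) is not the mechanism that actually works. Your deferral is recoverable: when $v = u$ one still has $d_\Gamma(u,m) = 2$ and $m \notin \link{u}^\pm$, so your case analysis applies verbatim and terminates at the contradiction that $\calR$ splits $m$ --- impossible for a partition based at $u$, since $u <_\circ m$ supplies $x \in \link{m} \setminus \str{u}$ placing $m$ in a non-singleton component of $\Gamma \setminus \str{u}$. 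But this needs to be written out, and you should also say a word about the remaining sub-case $v \sim u$, $v \neq u$ (which the paper itself glosses over in asserting $d_\Gamma(u,v) \in \{0,2\}$): when $d_\Gamma(u,v) = 1$ one checks that $\calR$ and $\calQ$ would be adjacent, hence compatible.
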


\begin{proof}
    Let~$\calP_1$ and~$\calP_2$ be based at~$m$, and let~$Q$ be the side of~$\calQ$ which is hugged by the sides~$P_1$ and~$P_2$ of~$\calP_1$ and~$\calP_2$ respectively.
    
    By \emph{Lemma \ref{lem:MV19, 5.1}}, we know that~$d_\Gamma(u, v) \in \{0, 2\}$. We first dispense with the case~$u = v$.

    \begin{claim*}
        ~$u \neq v$.
    \end{claim*}

    \begin{proof}[Proof of Claim.]
        Suppose that~$\calR$ is based at~$u$. We have~$u \in P_1 \sqcup P_2$; without loss of generality take~$u \in P_1$. Let~$R$ be the side of~$\calR$ which contains~$u$. If~$R \subseteq P_1$, then~$R \subseteq Q$, which contradicts incompatibility of~$\calR$ and~$\calQ$. Hence~$R$ contains either~$P_1$ or~$\overline{P_1}$. We cannot have~$R \supseteq \overline{P_1}$, as~$\overline{P_1} \ni u^{-1}$, and~$u^{-1} \notin R$ (since~$\calR$ is based at~$u$). Hence~$R \supseteq P_1$. However,~$\calR$ cannot split~$m$ (since~$m$ is principal), so since~$m \in R$, we must also have~$m^{-1} \in R$. Hence~$R$ has non-empty intersection with both sides of~$\calP_2$. Since~$u^{-1} \in \overline{R} \cap \overline{P_2}$, we must therefore have~$\overline{R} \cap P_2 = \emptyset$ and so~$P_2 \subseteq R$. Hence~$R$ contains all of~$\calC^Q(u)$, so~$R \supseteq Q$, and once again we have contradicted the incompatibility of~$\calR$ and~$\calQ$. 
    \end{proof}

    This proves the claim;~$u \neq v$. Hence~$d_\Gamma(u, v) = 2$, as required.

    Suppose that~$v \notin C$. Then if~$d_\Gamma(v, m) = 1$, we have~$v \in \link{u}$, which means that~$\calR$ and~$\calQ$ are adjacent (and hence compatible), a contradiction. Hence~$d_\Gamma(v, m) \geq 2$, so one side of~$\calR$ either contains, or is contained in,~$\overline{P_1}$. If the former, then this side also contains~$\overline{Q}$, which contradicts incompatibility of~$\calR$ with~$\calQ$. Hence there is some side~$R$ of~$\calR$ which is contained in~$\overline{P_1}$. 
    
    We may apply the same argument to~$\calP_2$. Now, if~$R \subseteq \overline{P_2}$, then~$R \subseteq \overline{P_1} \cap \overline{P_2} = \overline{Q}$, once again contradicting incompatibility of~$\calR$ and~$\calQ$. Hence we must have~$\overline{R} \subseteq \overline{P_2}$, which implies that~$P_2 \subseteq R$. 
    
    If~$\calR$ does not split~$m$, then~$P_2 \subseteq R$ implies that~$\{m, m^{-1}\} \subseteq R \subseteq \overline{P_1}$, another contradiction. Therefore~$\calR$ must split~$m$, but now since~$m$ is principal, it can only be split by partitions based at vertices equivalent to it, so~$v \sim m$. Since~$d_\Gamma(u, v) = 2$, and~$\link{m} \setminus \link{u} \neq \emptyset$, we must have~$v \in C$, as required.
\end{proof}

\subsection{Key Lemmas}\label{subsec:key lemmas}

In this section, we present the two key lemmas which are the driving force behind the retraction process. Each requires a certain technical condition on~$\Gamma$. 

We will say that a vertex of~$\Gamma$ is \emph{relevant} if it can be the base of a~$\Gamma$-Whitehead automorphism (and hence of a~$\Gamma$-partition). This terminology is simply for brevity, as we will not be concerned with vertices which cannot serve as a base of a~$\Gamma$-partition.

\begin{definition}\label{def:condition 1 (NEW)}
    We say that~$\Gamma$ satisfies \emph{Condition 1} if for every non-principal vertex~$u$, and every principal~$m >_\circ u$, there is no non-principal vertex~$u'$ with~$d_\Gamma(u, u') = 2$ such that~$[u', m] = 1$.
\end{definition}

\begin{remark}
    Note that if two vertices~$v$,~$w$ satisfy~$[v, w] = 1$, then we have~$d_\Gamma(v, w) \in \{0, 1\}$ (so in particular, if~$v \neq w$, then~$[v, w] = 1 \iff d_\Gamma(v, w) = 1$ (and equivalently, we have~$v \in \link{w}$ and~$w \in \link{v})$). Hence the end of the preceding definition could be rephrased as ``there is no non-principal vertex~$u'$ with~$d_\Gamma(u, u') = 2$ and~$d_\Gamma(u', m) = 1$''. However, we will find it useful to interchangeably use both notions throughout the document, so we will use both notations. 
\end{remark}

\begin{example}\label{eg:graph that breaks Condition 1 and the ensuing lemma}
    The following is an example of a graph~$\Gamma$ which does not satisfy Condition 1.

    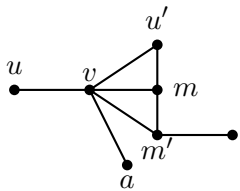
\begin{figure}[H]
        \begin{center}
            \begin{tikzpicture}
            \fill (-1, 0) circle (2pt); %u
            \fill (0, 0) circle (2pt); 
            \fill (0.9, 0.6) circle (2pt); %u'
            \fill (0.9, 0) circle (2pt); %m
            \fill (0.9, -0.6) circle (2pt); %m'
            \fill (0.5, -1) circle (2pt); %a
            \fill (1.9, -0.6) circle (2pt);
            \draw[black, thick] (-1, 0) -- (0, 0);
            \draw[black, thick] (0, 0) -- (0.9, 0.6);
            \draw[black, thick] (0, 0) -- (0.9, 0);
            \draw[black, thick] (0, 0) -- (0.9, -0.6);
            \draw[black, thick] (0, 0) -- (0.5, -1); 
            \draw[black, thick] (0.9, 0.6) -- (0.9, 0);
            \draw[black, thick] (0.9, 0) -- (0.9, -0.6);
            \draw[black, thick] (0.9, -0.6) -- (1.9, -0.6);
            \node at (-1, 0) [above = 2pt, black, thick]{$u$};
            \node at (0.9, 0.6) [above = 2pt, black, thick]{$u'$};
            \node at (0.9, 0) [right = 2pt, black, thick]{$m$};
            \node at (0.9, -0.45) [below = 1pt, black, thick]{$m'$};
            \node at (0.5, -0.93) [below = 2pt, black, thick]{$a$};
            \node at (0, 0.2) [black, thick]{$v$};
            \end{tikzpicture}
        \end{center}
        \caption{A configuration violating Condition 1.}
        \label{fig:doesn't satisfy Condition 1}
    \end{figure}

    Here,~$m$ and~$m'$ are principal, with~$m >_\circ u$ and~$m' >_\circ u'$ (so~$u$ and~$u'$ are non-principal); we have~$d_\Gamma(u, u') = 2$ and~$[u', m] = 1$, so this graph does not satisfy Condition 1. We remark that in this example, the unique vertex~$v$ in~$\link{u}$ is principal and commutes with~$u'$ but is not~$>_\circ u$, while~$a$ is non-principal and satisfies~$d_\Gamma(u, a) = 2$, but does not commute with either of~$m$ or~$m'$ (which are both~$>_\circ u$).
\end{example}

\begin{lemma}\label{lem:condition 1 (NEW)}
    Suppose that~$\Gamma$ satisfies Condition 1. Let~$\calQ$ be a non-principal partition, hugged by the set~$\{\calP_1, \calP_2\}$. Let~$\calQ'$ be a non-principal partition, which is compatible with both~$\calP_1$ and~$\calP_2$. Then~$\calQ'$ is compatible with~$\calQ$.
\end{lemma}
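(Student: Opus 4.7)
The plan is to argue by contradiction: assume $\calQ'$, based at a non-principal vertex $u'$, is compatible with both $\calP_1$ and $\calP_2$ but incompatible with $\calQ$. First I invoke \emph{Lemma \ref{lem:Lemma A, pg 87}} with $\calR = \calQ'$ to obtain $d_\Gamma(u,u') = 2$ and $u' \in C$, where $C \in \calC(u)$ is the component containing $m$. If $u'$ is adjacent to $m$, then $[u', m] = 1$, so $u'$ is a non-principal vertex at distance $2$ from $u$ commuting with the principal vertex $m >_\circ u$, directly violating Condition~1. Hence we may assume $d_\Gamma(u', m) \geq 2$.

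Under this assumption, \emph{Lemma \ref{lem:distance >= 2 implies containment of sides}} applied to $(\calQ', \calP_i)$ for $i \in \{1,2\}$ yields that a side of $\calQ'$ is contained in a side of $\calP_i$. Any containment of a side of $\calQ'$ in $P_i$ is ruled out immediately, since $P_i \subseteq Q$ would force compatibility of $\calQ'$ with $\calQ$; so for each $i$ a side of $\calQ'$ lies in $\overline{P_i}$. If opposite sides of $\calQ'$ lie in $\overline{P_1}$ and $\overline{P_2}$ respectively, then
\[V^\pm \setminus \link{u'}^\pm \;=\; Q' \cup \overline{Q'} \;\subseteq\; \overline{P_1} \cup \overline{P_2} \;=\; V^\pm \setminus \link{m}^\pm,\]
using $P_1 \cap P_2 = \emptyset$, so $\link{u'} \supseteq \link{m}$. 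Since $u'$ and $m$ are non-adjacent, this gives either $u' >_\circ m$, contradicting principality of $m$, or $u' \sim m$, in which case the non-principality of $u'$ supplies some $w$ with $\link{m} = \link{u'} \subsetneq \link{w}$, again contradicting principality of $m$.

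The remaining case is that a single side, say $Q'$, of $\calQ'$ lies in $\overline{P_1} \cap \overline{P_2}$. Here the key structural identity
\[\overline{P_1} \cap \overline{P_2} \;=\; \overline{Q} \cup \{u\}\]
comes into play: $\calC^Q(m)$ consists precisely of those components of $\Gamma \setminus \str{m}$ whose signed versions lie entirely in $Q$, so the singleton $\{u\} \in \calC(m)$ is absent from $\calC^Q(m)$ because its signed version $\{u, u^{-1}\}$ straddles $Q$ and $\overline{Q}$, while the remaining components outside $\calC^Q(m)$ account exactly for the rest of $\overline{Q}$. Now $u' \in C$ and $C^\pm \subseteq Q$ give $\{u', u'^{-1}\} \subseteq Q$; but $Q'$ contains one of $u', u'^{-1}$ and is contained in $\overline{Q} \cup \{u\}$, forcing this element to equal $u$, which is impossible since $u \neq u'$ and $V, V^{-1}$ are disjoint. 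The main obstacle is establishing and exploiting the identity $\overline{P_1} \cap \overline{P_2} = \overline{Q} \cup \{u\}$; this is the precise combinatorial content of the $2$-hugging hypothesis and requires careful bookkeeping of how the components of $\calC(m)$ distribute between the two sides of $\calQ$.
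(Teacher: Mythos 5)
Your overall strategy is sound and, after the shared opening (\emph{Lemma \ref{lem:Lemma A, pg 87}} plus Condition 1 to reduce to $u' \in C$, $d_\Gamma(u,u') = 2$, $[u',m] \neq 1$), it diverges from the paper's. The paper uses compatibility of $\calQ'$ with $\calP_1$ to split on whether a side of $\calQ'$ lies in $P_1$ or $P_1 \subseteq Q'$, and in the latter case pushes $\{m, m^{-1}\} \subseteq Q'$ through compatibility with $\calP_2$ to land a side of $\calQ'$ inside $\overline{Q}$. You instead apply \emph{Lemma \ref{lem:distance >= 2 implies containment of sides}} in the other direction for both $i$, and split on whether the same side of $\calQ'$ serves for both $\calP_1$ and $\calP_2$; your Case A (opposite sides), resolved by deducing $\link{m} \subseteq \link{u'}$ and contradicting principality of $m$, is a genuinely different and clean argument. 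Both routes work.

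There is, however, one genuine error: the ``key structural identity'' $\overline{P_1} \cap \overline{P_2} = \overline{Q} \cup \{u\}$ is false; the correct statement, recorded in the paper immediately after \emph{Definition \ref{def:2-hugged in Pi}}, is $\overline{P_1} \cap \overline{P_2} = \overline{Q}$. Your justification --- that $\{u\}$ is absent from $\calC^Q(m)$ because ``its signed version $\{u, u^{-1}\}$ straddles $Q$ and $\overline{Q}$'' --- misreads the construction: since $\link{u} \subseteq \link{m}$, the vertices $u$ and $u^{-1}$ are isolated in $\Gamma^\pm \setminus \str{m}^\pm$ and form two \emph{separate} singleton components (the setup lists ``(ii) $\{u\}$ and $\{u^{-1}\}$'' as distinct elements of $\calC(m)$, and \emph{Example \ref{eg:hugging partitions}}(b) explicitly takes $\calC_1 = \{\{u\}\}$). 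So whichever of them lies in $Q$ does belong to $\calC^Q(m)$ and hence to $P_1 \sqcup P_2$. The error happens to be harmless: with the correct identity your Case B is immediate, since $Q' \subseteq \overline{Q}$ already makes $\calQ'$ compatible with $\calQ$, and your workaround via $\{u', u'^{-1}\} \subseteq Q$ reaches the same contradiction either way. The proof therefore stands, but the identity you single out as ``the main obstacle'' is both wrong as stated and, once corrected, not an obstacle at all.
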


\begin{proof}
    Denote the bases of~$\calQ$ and~$\calQ'$ by~$u$ and~$u'$ respectively, the base of~$\calP_1$ and~$\calP_2$ by~$m$, and let~$C \in \calC(u)$ be the component containing~$m$.

    By the contrapositive of \emph{Lemma \ref{lem:Lemma A, pg 87}}, if either~$u' \notin C$ or~$d_\Gamma(u, u') \neq 2$, then~$\calQ'$ is compatible with~$\calQ$, and we're done, so assume that~$u' \in C$ and that~$d_\Gamma(u, u') = 2$. Moreover, since~$\Gamma$ satisfies Condition 1, we may assume that~$u'$ does not commute with~$m$ (so in particular,~$\calQ'$ is not adjacent to~$\calP_1$ or~$\calP_2$).

   ~$\calQ'$ is compatible with~$\calP_1$, so~$P_1$ either contains, or is contained in, a side~$Q'$ of~$\calQ'$. If the former, then~$Q' \subseteq P_1 \subseteq Q$, so~$\calQ$ and~$\calQ'$ are compatible, and we're done. So suppose the latter; then~$Q'$ contains~$\{m, m^{-1}\}$, and so has non-empty intersection with both sides of~$\calP_2$. Therefore,~$Q'$ contains one of the sides of~$\calP_2$. If~$P_2 \subseteq Q'$, then~$Q' \supseteq P_1 \sqcup P_2 \supseteq \{u', u'^{-1}\}$, which is impossible, so we must have~$\overline{Q'} \supseteq \overline{P_2} \supseteq Q$, whence~$\calQ'$ is compatible with~$\calQ$.
\end{proof}

We note that the conclusion of the lemma does not hold for the graph shown in \emph{Figure \ref{fig:doesn't satisfy Condition 1}}. We have~$m$ principal, while~$u$ and~$u'$ are both non-principal (since~$m >_\circ u$ and~$m' >_\circ u'$). The~$u$-partition~$\calQ$ determined by the side~$\{u, a\}$ is 1-hugged by the~$m$-partition~$\calP$ determined by the side~$\{m, u, a\}$. On the other hand,~$u'$ and~$m$ commute, so any~$u'$-partition is compatible with any~$m$-partition, by definition. So in particular, the~$u'$-partition~$\calQ'$ given by the side~$\{u', u\}$ is compatible with~$\calP$, but is not compatible with~$\calQ$.

\begin{definition}\label{def:condition 2 (NEW)}
    Say that~$\Gamma$ satisfies \emph{Condition 2} if, for every non-principal vertex~$u$, every principal vertex~$m >_\circ u$, and every principal vertex~$n \neq m$ with~$d_\Gamma(u, n) = 2$, we have~$[m, n] \neq 1$.
\end{definition}

In other words, in every component of~$\Gamma \setminus \str{u}$ which contains some principal~$m >_\circ u$, there is no other principal vertex~$n$ close to~$u$ and commuting with~$m$. Equivalently, every principal vertex~$n$ with~$d_\Gamma(u, n) = 2$ satisfies~$d_\Gamma(m, n) \geq 2$. Intuitively this amounts to connected components of~$\Gamma \setminus \str{u}$ not having too many vertices close to~$u$, as can be seen in the following example.

\begin{example}
    The following graph violates Condition 2.
    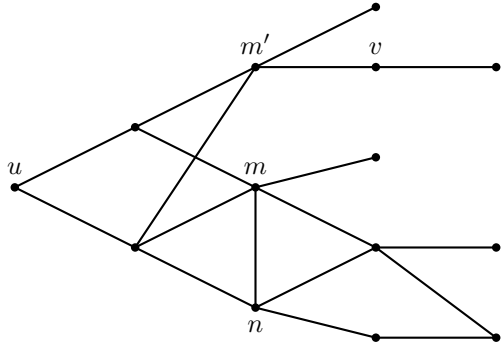
\begin{figure}[H]
        \begin{center}
            \begin{tikzpicture}[scale = 0.8]
                \fill (-0.5, 0) circle (2pt);
                \fill (1.5, 1) circle (2pt);
                \fill (1.5, -1) circle (2pt);
                \fill (3.5, 0) circle (2pt);
                \fill (3.5, 2) circle (2pt);
                \fill (3.5, -2) circle (2pt);
                \fill (5.5, 3) circle (2pt);
                \fill (5.5, 2) circle (2pt);
                \fill (5.5, 0.5) circle (2pt);
                \fill (5.5, -1) circle (2pt);
                \fill (5.5, -2.5) circle (2pt);
                \fill (7.5, -2.5) circle (2pt);
                \fill (7.5, -1) circle (2pt);
                \fill (7.5, 2) circle (2pt);
                \draw[black, thick] (-0.5, 0) -- (1.5, 1);
                \draw[black, thick] (-0.5, 0) -- (1.5, -1);
                \draw[black, thick] (1.5, -1) -- (3.5, 0);
                \draw[black, thick] (1.5, -1) -- (3.5, -2);
                \draw[black, thick] (1.5, -1) -- (3.5, 2);
                \draw[black, thick] (1.5, 1) -- (3.5, 0);
                \draw[black, thick] (1.5, 1) -- (3.5, 2);
                \draw[black, thick] (3.5, 0) -- (3.5, -2);
                \draw[black, thick] (3.5, -2) -- (5.5, -2.5);
                \draw[black, thick] (3.5, 0) -- (5.5, 0.5);
                \draw[black, thick] (3.5, 2) -- (5.5, 2);
                \draw[black, thick] (3.5, 2) -- (5.5, 3);
                \draw[black, thick] (3.5, -2) -- (5.5, -1);
                \draw[black, thick] (3.5, 0) -- (5.5, -1);
                \draw[black, thick] (5.5, -2.5) -- (7.5, -2.5);
                \draw[black, thick] (5.5, -1) -- (7.5, -2.5);
                \draw[black, thick] (5.5, -1) -- (7.5, -1);
                \draw[black, thick] (5.5, 2) -- (7.5, 2);
                \node at (-0.5, 0) [black, thick, above = 1pt]{\small{$u$}};
                \node at (3.5, 2) [black, thick, above = 1pt]{\small{$m'$}};
                \node at (3.5, 0) [black, thick, above = 1pt]{\small{$m$}};
                \node at (3.5, -2) [black, thick, below = 1pt]{\small{$n$}};
                \node at (5.5, 2) [black, thick, above = 1pt]{\small{$v$}};
            \end{tikzpicture}
            \caption{Here we have~$u$ non-principal and relevant with~$u <_\circ m$ principal, and~$n$ principal with~$d_\Gamma(u, n) = 2$ and~$[m, n] = 1$.
            %Note that although~$m' >_\circ u$ and the connected component of~$\Gamma \setminus \str{u}$ containing~$m'$ contains another principal vertex~$v$ which commutes with~$m'$, we do not have another violation of Condition 2 since~$d_\Gamma(u, v) \neq 2$.
            }
            \label{fig:graph violating Condition 2}
        \end{center}
    \end{figure}
\end{example}

\begin{lemma}\label{lem:condition 2 (NEW)}
    Suppose that~$\Gamma$ satisfies Condition 2. Let~$\Pi$ be a compatible set of partitions containing distinct non-principal partitions~$\calQ$ and~$\calQ'$. Suppose that~$\calQ$ is hugged in~$\Pi$ by~$\{\calP_1, \calP_2\}$, and~$\calQ'$ is hugged in~$\Pi$ by~$\{\calP_1', \calP_2'\}$. Suppose that~$\calR$, some principal partition (not necessarily in~$\Pi$), is compatible with~$\{\calP_1, \calP_2, \calP_1', \calP_2'\}$. Then~$\calR$ is in fact compatible with at least one of~$\{\calQ, \calQ'\}$. 
\end{lemma}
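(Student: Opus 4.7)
The plan is to proceed by contradiction: assume that the principal partition $\calR$ is incompatible with both $\calQ$ and $\calQ'$. Let $v$ denote the (principal) base of $\calR$, and apply \emph{Lemma \ref{lem:Lemma A, pg 87}} to each of the pairs $(\calR, \calQ)$ and $(\calR, \calQ')$. This yields $v \sim m$, $v \in C$, $d_\Gamma(u, v) = 2$, and symmetrically $v \sim m'$, $v \in C'$, $d_\Gamma(u', v) = 2$; the equivalence $v \sim m$ (respectively $v \sim m'$) appears inside the proof of Lemma A as a consequence of $\calR$ having to split the principal vertex $m$ (respectively $m'$). The argument then splits into cases according to whether $v$ equals $m$ or $m'$.

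In the generic case $v \neq m$ and $v \neq m'$, I would apply Condition 2 at the non-principal vertex $u$ with principal vertex $n = v$ at distance $2$ from $u$, yielding $[m, v] \neq 1$. Together with $v \sim m$, this forces $\link{v} = \link{m}$, so $v$ is isolated in $\Gamma \setminus \str{m}$ and $\{v\}$ is a singleton component of $\calC(m)$. Since $v \in C \subseteq Q$, this singleton lies in $\calC^Q(m)$; up to swapping $\calP_1 \leftrightarrow \calP_2$, we have $\{v\} \in \calC_1$, so that $v, v^{-1} \in P_1$. The partition $\calR$ then has $v \in R$ and $v^{-1} \in \overline{R}$, so both $R \cap P_1$ and $\overline{R} \cap P_1$ are nonempty. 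Since $v \notin \link{m}$, the partitions $\calR$ and $\calP_1$ are not adjacent, so their compatibility must come from side containment, and the only surviving options are $R \subseteq P_1$ or $R \supseteq \overline{P_1}$. Recalling $\overline{Q} = \overline{P_1} \cap \overline{P_2}$, the first forces $R \cap \overline{Q} = \emptyset$ and the second forces $\overline{R} \cap \overline{Q} = \emptyset$, each contradicting the incompatibility of $\calR$ with $\calQ$.

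The case $v = m$ with $v \neq m'$ (together with its mirror) is handled by applying Condition 2 instead at $u'$ with principal vertices $m'$ and $m$. Since $m \neq m'$ are both principal and $d_\Gamma(u', m) = 2$, we obtain $[m, m'] \neq 1$; combined with $v \sim m'$, i.e., $m \sim m'$, this gives $\link{m} = \link{m'}$. Thus $\{m\}$ is a singleton component of $\calC(m')$, lying in $\calC^{Q'}(m')$, and the same side-containment analysis as in the generic case, now applied to $\calR$ with $\calP_1'$ and $\calP_2'$, forces $\calR$ to be compatible with $\calQ'$, a contradiction.

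The residual case $v = m = m'$ has all four huggers and $\calR$ based at the same vertex $m$. I expect this case to be incompatible with the hypothesis that $\calQ, \calQ'$ are distinct and both in $\Pi$. Concretely, the incompatibility of $\calR$ with each of $\calQ, \calQ'$ forces $R \supseteq P_1 \cup P_1'$ and $R \subseteq \overline{P_2} \cap \overline{P_2'}$, equivalent to $P_1 \cap P_2' = P_1' \cap P_2 = \emptyset$; this together with the pairwise compatibility of the four huggers in $\Pi$ (which for partitions based at a common vertex $m$ reduces to containment/disjointness among the component sets) forces $\calC^Q(m) = \calC^{Q'}(m)$, and the description of $Q, Q'$ in terms of these components then yields $\calQ = \calQ'$, contradicting distinctness. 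The main obstacle will be carefully treating this residual case, in particular handling the subcase $u \neq u'$ where one must also use the link parts $\link{m}^\pm \setminus \link{u}^\pm$ versus $\link{m}^\pm \setminus \link{u'}^\pm$ to show the resulting $\calQ, \calQ'$ cannot both be in $\Pi$.
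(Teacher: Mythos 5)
Your reduction to the case $v \sim m$ and $v \sim m'$ is recoverable, but note that $v \sim m$ is \emph{not} a stated conclusion of \emph{Lemma \ref{lem:Lemma A, pg 87}}: inside that proof the deduction ``$\calR$ splits $m$, hence $v \sim m$'' is made under the reductio hypothesis $v \notin C$, so you cannot simply cite it. To get $v \sim m$ unconditionally you must rerun that side-containment argument for $v \in C$ and invoke Condition~2 to exclude $[v,m]=1$ --- this is exactly the paper's Case~I, and it is a real step, not a citation.

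The serious gap is in your ``generic case'' $v \neq m$, $v \neq m'$. You claim that, up to swapping $\calP_1 \leftrightarrow \calP_2$, the singleton $\{v\}$ lies in $\calC_1$ ``so that $v, v^{-1} \in P_1$''. Since $v \sim m$ and $[v,m]\neq 1$, the sets $\{v\}$ and $\{v^{-1}\}$ are \emph{two distinct} singleton components of $\Gamma^\pm \setminus \str{m}^\pm$ (this is precisely what allows partitions based at $m$ to split $v$), and the hugging construction is free to put $\{v\} \in \calC_1$ and $\{v^{-1}\} \in \calC_2$, i.e.\ $v \in P_1$ and $v^{-1} \in P_2$. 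In that configuration your key assertion that both sides of $\calR$ meet $P_1$ fails, and in fact the conclusion you are trying to prove is false: a principal $\calR$ based at $v \sim m$, $v \neq m$, can be compatible with both huggers yet incompatible with $\calQ$. (Concretely: take the $3$-rake with an extra vertex $a_1'$ joined to $v$ and $b_1$, so $a_1' \sim a_1$; let $Q = \{u, a_1^\pm, a_1'^\pm, b_1^\pm, a_2^\pm, b_2^\pm\}$ be hugged by $P_1 = \{a_1, u, a_1'\}$ and $P_2 = \{a_1^{-1}, a_1'^{-1}, a_2^\pm, b_2^\pm\}$, and let $\calR$ be based at $a_1'$ with $R = \{a_1', a_1, u, a_3^\pm, b_3^\pm\}$; all four intersections of $\calR$ with $\calQ$ are nonempty.) This is why the lemma only asserts compatibility with \emph{at least one} of $\calQ, \calQ'$, and why \emph{Corollary \ref{cor:restatement of Key Lemma 2}} allows exactly one incompatibility. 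The whole regime $n \sim m \sim m'$ must therefore be handled by playing $\calQ$ and $\calQ'$ off against each other, as the paper does: one shows $R \supseteq P_1 \cup P_1'$ and $\overline{R} \supseteq P_2 \cup P_2'$, deduces $Q' \supseteq Q \setminus \link{u'}^\pm$ and $Q \supseteq Q' \setminus \link{u}^\pm$, and concludes $Q = Q'$, contradicting distinctness. Your residual case $v = m = m'$ gestures at this but is only a sketch, and it covers too small a piece of the problematic regime.
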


\begin{proof}
    Let~$\calQ$ be based at~$u$, and~$\calQ'$ be based at~$u'$. Let~$\calP_1$ and~$\calP_2$ be based at~$m >_\circ u$, and let~$\calP_1'$ and~$\calP_2'$ be based at~$m' >_\circ u'$. Let~$C \in \calC(u)$ be the component containing~$m$. Let~$Q$ be the side of~$\calQ$ hugged by the sides~$P_1$ and~$P_2$ of~$\calP_1$ and~$\calP_2$; take the analogous notation~$Q'$,~$P_1'$,~$P_2'$. 
    
    Let~$\calR$ be based at the principal vertex~$n$. 

    If~$d_\Gamma(u, n) \neq 2$, then by \emph{Lemma \ref{lem:MV19, 5.1}},~$\calR$ is compatible with~$\calQ$, so suppose that~$d_\Gamma(u, n) = 2$.

    \textbf{Case I:}~$n \not\sim m$.

    Since~$\Gamma$ satisfies Condition 2,~$[n, m] \neq 1$, so~$\calR$ and~$\calP_1$ are not adjacent. Nevertheless,~$\calR$ is compatible with~$\calP_1$, so there must be a choice of sides~$P_1^\times, R^\times$ such that~$P_1^\times \cap R^\times = \emptyset$. If~$P_1^\times = \overline{P_1}$, then since~$\overline{Q} \subseteq \overline{P_1}$ we have~$R^\times \cap \overline{Q} = \emptyset$, and we're done. So we may take~$P_1^\times = P_1$; let~$R$ be the side of~$\calR$ with~$R \cap P_1 = \emptyset$.
    
    Similarly, we may assume that some side of~$\calR$ has empty intersection with~$P_2$. We show that~$R \cap P_2 = \emptyset$. Without loss of generality, assume~$R$ is the~$n$-side of~$\calR$. Since~$R \cap P_1 = \emptyset$,~$n \notin P_1$ (and so~$n \in \overline{P_1}$). By \emph{Lemma \ref{lem:Lemma A, pg 87}}, if~$n \notin C$,~$\calR$ is automatically compatible with~$\calQ$, so we may assume that~$n \in C$. Hence~$n \in Q$, so (since~$[n, m] \neq 1$) we have~$n \in P_2$, so~$n \in \overline{P_1} \cap P_2$. Since~$n \not\sim m$ and~$n$ is principal,~$n$ cannot be split by~$\calP_1$, so~$n^{-1} \in \overline{P_1} \cap P_2$ -- in particular, we cannot have~$\overline{R} \cap P_2 = \emptyset$. 

    Hence~$R \cap P_2 = \emptyset$. We already have~$R \cap P_1 = \emptyset$, so by \emph{Lemma \ref{lem:distance >= 2 implies containment of sides}},~$R \subseteq \left( \overline{P_1} \cap \overline{P_2} \right) = \overline{Q}$, which implies that~$\calR$ is compatible with~$\calQ$. 

    Similarly,~$n \not\sim m'$ implies that~$\calR$ is compatible with~$\calQ'$. We are left with the following case.

    \textbf{Case II:}~$n \sim m \sim m'$.

    Since~$\Gamma$ satisfies Condition 2, any distinct pair of these three vertices do not commute (although we could have any pair being equal to each other). Hence~$\link{n} = \link{m} = \link{m'}$.
    
    Note that since~$m >_\circ u$ and~$m >_\circ u'$, we cannot have~$[u, u'] = 1$: otherwise, we would have~$u' \in \link{u} \subseteq \link{m}$, which precludes~$m >_\circ u'$.

    Let~$R$ be the side of~$\calR$ containing~$m$. If either side of~$\calR$ is contained in~$P_1$, then that side is also contained in~$Q$, so we're done. We cannot have~$P_1 \subseteq \overline{R}$ (since~$m \in P_1$), so we may assume that~$P_1 \subseteq R$. Similarly, we may assume that~$P_1' \subseteq R$ and, applying the same arguments again, that~$P_2 \cup P_2' \subseteq \overline{R}$. 
    
    Consider the side~$Q'$ of~$\calQ'$ which contains~$m$. Principality of~$m$ implies that~$\calQ'$ does not split~$m$, so~$\{m, m^{-1}\} \subseteq Q'$, which means that~$Q'$ intersects both sides of~$\calP_1$. Hence, since~$\calQ'$ and~$\calP_1$ are compatible, one side of~$\calP_1$ must have empty intersection with~$\overline{Q'}$. If~$\overline{P_1} \cap \overline{Q'} = \emptyset$, then we have~$\overline{Q'} \subseteq P_1 \subseteq R$, and we're done. So assume that~$P_1 \cap \overline{Q'} = \emptyset$. In this case,~$Q'$ must contain~$P_1$. Applying the same argument to~$P_2$, we see that either~$\overline{P_2} \cap \overline{Q'} = \emptyset$ (in which case~$\overline{Q'} \subseteq P_2 \subseteq \overline{R}$, and we're done) or~$P_2 \subseteq Q'$ -- take this to be the case. But now~$Q' \supseteq \left(P_1 \cup P_2\right)$, which means that 
    \begin{equation}\label{eqn:Q in Q'}
        Q' \supseteq Q \setminus \left( \link{u'}^\pm \cap Q\right);
    \end{equation}
    that is,~$Q'$ contains all of~$Q$, apart from possibly any elements of~$\link{u'}^\pm \subseteq \link{m}^\pm$ which happen to be contained in~$Q$.

    The preceding paragraph used nothing other than the compatibility of~$\calQ'$ with~$\calP_1$ and~$\calP_2$. We may therefore run the same argument using compatibility of~$\calQ$ with~$\calP_1'$ and~$\calP_2'$, concluding that 
    \begin{equation}\label{eqn:Q' in Q}
        Q \supseteq Q' \setminus \left( \link{u}^\pm \cap Q'\right).
    \end{equation}
    Suppose, without loss of generality, that~$u \in Q$ and~$u' \in Q'$. Then by (\ref{eqn:Q' in Q}),~$u' \in Q$, but necessarily~$u'^{-1} \notin Q$ -- as otherwise~$u'^{-1} \in Q'$, which is impossible. Hence~$\calQ$ splits~$u'$, which means that~$u' \leq_\circ u$. Switching~$u$ and~$u'$ in this argument gives that~$u \leq_\circ u'$ -- hence~$\link{u} = \link{u'}$. But now we have~$Q \cap \link{u'}^\pm = Q' \cap \link{u}^\pm = \emptyset$, so~$Q = Q'$. This contradicts the hypothesis that~$\calQ$ and~$\calQ'$ are distinct partitions. This finishes the argument.
\end{proof}

It will be useful to rephrase the statement of this lemma as follows. 
\begin{corollary}\label{cor:restatement of Key Lemma 2}
    Suppose that~$\Gamma$ satisfies Condition 2. Let~$\Pi$ be a compatible set of partitions, and let~$H \subseteq \Pi$ be the set of non-principal partitions in~$\Pi$ which are hugged in~$\Pi$. Suppose that~$\calR$ is a principal partition which is compatible with~$\Pi \setminus H$. Then~$\calR$ is incompatible with at most one element of~$H$. 
    
    \noindent Therefore, if~$\calR$ is incompatible with some~$\calQ \in H$, then it can replace~$\calQ$ -- i.e.,~$\Pi' \coloneqq \left( \Pi \setminus \calQ \right) \cup \calR$ is a compatible set. 
\end{corollary}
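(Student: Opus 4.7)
The plan is to deduce Corollary~\ref{cor:restatement of Key Lemma 2} directly from the preceding Lemma~\ref{lem:condition 2 (NEW)} by an easy contrapositive argument, using the observation that hugging partitions are automatically principal.

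First, I would note that by the setup of Definition~\ref{def:2-hugged in Pi}, any partition which 1- or 2-hugs a non-principal $\calQ \in H$ is based at some principal vertex $m >_\circ u$ (where $u$ is the base of $\calQ$), and is therefore itself a principal partition. Since $H$ consists only of non-principal partitions, any hugging set witnessing $\calQ \in H$ lies in $\Pi \setminus H$.

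Next, suppose for contradiction that $\calR$ is incompatible with two distinct partitions $\calQ, \calQ' \in H$. Pick hugging sets $\{\calP_1, \calP_2\} \subseteq \Pi$ for $\calQ$ and $\{\calP_1', \calP_2'\} \subseteq \Pi$ for $\calQ'$ (allowing for the 1-hugged case by permitting one of each pair to be absent; the argument below goes through essentially unchanged, or one can formally apply Lemma~\ref{lem:condition 2 (NEW)} with the hugging set interpreted appropriately). By the observation above, these four partitions all belong to $\Pi \setminus H$, so by hypothesis $\calR$ is compatible with each of them. Lemma~\ref{lem:condition 2 (NEW)} then forces $\calR$ to be compatible with at least one of $\calQ$, $\calQ'$, contradicting our assumption. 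Hence $\calR$ is incompatible with at most one element of $H$.

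For the final sentence, suppose $\calR$ is incompatible with some $\calQ \in H$. Writing $\Pi \setminus \calQ = (\Pi \setminus H) \sqcup (H \setminus \{\calQ\})$, compatibility of $\calR$ with $\Pi \setminus H$ is the standing hypothesis, while compatibility of $\calR$ with every element of $H \setminus \{\calQ\}$ is exactly what the first part of the statement gives. Since $\Pi$ was already a compatible set and $\calR$ is compatible with every element of $\Pi \setminus \calQ$, the set $\Pi' = (\Pi \setminus \calQ) \cup \calR$ is pairwise compatible.

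I expect no substantive obstacle: the only point requiring a moment's care is verifying that the hugging partitions lie in $\Pi \setminus H$, so that the hypothesis of compatibility of $\calR$ with $\Pi \setminus H$ actually delivers the four compatibilities needed to invoke Lemma~\ref{lem:condition 2 (NEW)}. The corollary is essentially a clean packaging of that lemma.
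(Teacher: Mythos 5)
Your proof is correct and takes essentially the same route as the paper: deduce the corollary from Lemma \ref{lem:condition 2 (NEW)} by contrapositive, noting (as the paper leaves implicit) that hugging partitions are principal and hence lie in $\Pi \setminus H$, so the hypothesis on $\calR$ delivers the required compatibilities to invoke the lemma. The only difference is that you spell out this bookkeeping step, which the paper compresses into a single sentence.
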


\begin{proof}
    The statement in the lemma follows immediately from the statement here. For the converse, notice that if~$\calR$ were incompatible with two elements of~$H$, then we could take them to be~$\calQ$ and~$\calQ'$ in the lemma, which would be a contradiction.
\end{proof}

We point out here that these arguments never used the requirement that~$\Gamma$-partitions are thick (that is, have at least two elements on each side). Therefore, although it was never explicitly mentioned, one may check that these arguments work even if the hugged non-principal partitions concerned are 1-hugged but not 2-hugged; indeed, in some cases, the arguments can be simplified somewhat.

\subsection{The retraction process}\label{subsec:retraction process}

In this and the next subsection we prove \textit{Theorem \ref{customthm:main work} (i)}, which we restate as follows.

\begin{theorem}\label{thm:retraction process (D,(i))}
    If~$\Gamma$ is spiky, then there is a~$\uag$-equivariant deformation retraction of~$\spine$ to a cube complex~$\res$. The cubes of~$\spine$ which survive this retraction can be characterised in terms of hugging partitions. 
\end{theorem}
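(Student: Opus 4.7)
The plan is to define $\res$ combinatorially, verify its $\uag$-invariance, and construct the retraction as a sequence of $\uag$-equivariant free-face collapses, invoking the key lemmas of \S\ref{subsec:key lemmas} to control parallel-processing interference.

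Call a compatible set $\Pi$ \emph{reduced} if no non-principal $\calQ \in \Pi$ is hugged in $\Pi$. Since the huggers of a hugged $\calQ$ lie inside $\Pi$, shrinking $\Pi$ can only un-hug partitions; hence any subset of a reduced set is reduced, and equivalently any superset of a non-reduced set is non-reduced. Define $\res$ to be the subcomplex of $\spine$ spanned by cubes $c(\Pi_1, \Pi_2; \alpha)$ with $\Pi_2$ reduced. Since $\uag$ acts on $\spine$ solely by changing the marking $\alpha$, leaving the combinatorial labels $(\Pi_1, \Pi_2)$ fixed, and hugging depends only on those labels, $\res$ is $\uag$-invariant.

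To build the retraction, I would process the ``bad'' cubes (those whose top label is non-reduced) by downward induction on $|\Pi|$. For each orbit of non-reduced compatible sets $\Pi$ of size $k$, select an $\mathrm{Aut}(\Gamma)$-equivariant distinguished hugged non-principal $\calQ \in \Pi$, say minimal under a fixed total order on $\Gamma$-partitions. By the monotonicity observation, every non-reduced proper superset of $\Pi$ has already been processed. Consequently, the codim-$1$ ``top'' face $F = c(\{\calQ\}, \Pi; \alpha)$ of the top cube $C = c(\emptyset, \Pi; \alpha)$ is a free face at this stage: any surviving super-cube $c(\Pi'_1, \Pi'_2)$ of $F$ of dimension $\geq k$ must satisfy $\Pi'_2 \supseteq \Pi$, which forces $\Pi'_2 = \Pi$ (any strict superset was removed earlier), and $\Pi'_1 \subseteq \{\calQ\}$, which forces $\Pi'_1 = \emptyset$ (else the super-cube is $F$ itself). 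Collapse $C$ through $F$, and iterate the pairing $\bigl(c(\Pi_1, \Pi; \alpha),\, c(\Pi_1 \cup \{\calQ\}, \Pi; \alpha)\bigr)$ in order of increasing $|\Pi_1|$: at each stage, the previously removed cubes of $\Pi$-type ensure the new lower cube is a free face of the new upper cube, so an elementary collapse removes both. This process eliminates every cube with top label $\Pi$.

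The main obstacle is showing that these collapses, performed in parallel across all orbits of non-reduced $\Pi$ at dimension $k$, cohere into a genuine $\uag$-equivariant deformation retraction onto $\res$. The delicate situation is a ``competitor'' compatible set $\Pi' = (\Pi \setminus \{\calQ\}) \cup \{\calR\}$ of the same size, sharing with $\Pi$ the common face $c(\emptyset, \Pi \setminus \{\calQ\})$. Lemma~\ref{lem:condition 1 (NEW)} (which invokes Condition~$1$) forces $\calR$ to be principal: a non-principal $\calR$ compatible with the huggers of $\calQ$---which lie in $\Pi \setminus \{\calQ\}$---would automatically be compatible with $\calQ$ itself, making $\Pi \cup \{\calR\}$ a strictly larger non-reduced compatible set already processed. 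For principal $\calR$, Corollary~\ref{cor:restatement of Key Lemma 2} (which invokes Condition~$2$) ensures that the hugged non-principals of $\Pi$ other than $\calQ$ survive intact into $\Pi'$, so $\Pi'$ is itself non-reduced and is collapsed in the same stage via its own distinguished hugged non-principal---using a free face disjoint from $F$, so the two collapses do not conflict. Iterating through all stages produces the $\uag$-equivariant deformation retraction $\spine \to \res$, and by construction the surviving cubes are precisely those whose top label is reduced, i.e., contains no hugged non-principal, giving the desired characterisation in terms of hugging partitions.
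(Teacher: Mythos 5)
Your combinatorial skeleton (downward induction on $|\Pi_2|$, the monotonicity observation that supersets of non-reduced sets are non-reduced, and the discrete-Morse-style pairing $c(\Pi_1,\Pi)\leftrightarrow c(\Pi_1\cup\{\calQ\},\Pi)$) is appealing and cleaner than the paper's triple induction on $(b,t,p)$. But there is a genuine gap: your matching is defined on \emph{addresses} $c(\Pi_1,\Pi_2;\alpha)$, not on cubes. A cube of $\spine$ whose faces do not contain a marked Salvetti vertex is in general shared between the stars of several marked Salvettis, and relative to a different Salvetti $(\sal,\beta)$ it carries a different address $c(\Pi_1',\Pi_2';\beta)$ in which the labelled set $\Pi_2'$ need not equal $\Pi_2$ (different treelike sets, different signed vertex labellings). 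You have not shown that ``$\Pi_2$ is reduced'', nor your distinguished choice of hugged $\calQ$, is independent of the address; if a cube is reduced under one address and non-reduced under another, your scheme simultaneously instructs you to collapse it and to keep it. Your free faces $c(\Pi_1\cup\{\calQ\},\Pi;\alpha)$ all contain the top vertex $\salb{\Pi}$ and (for $\Pi_1\neq\emptyset$) no Salvetti vertex, which is precisely the class of faces most susceptible to these identifications. The paper's proof is organised around exactly this difficulty: the base case $b=0$ only pushes along faces anchored at a Salvetti vertex (where the address is unambiguous), and the inductive step $b>0$ requires Lemma~\ref{lem:meet on (n-1)-face} to rule out a candidate free face being shared with a cube carrying another marking. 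Your proposal contains no analogue of that lemma.

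A related warning sign: as written, your freeness argument never actually uses spikiness. The ``competitor'' $\Pi'=(\Pi\setminus\{\calQ\})\cup\{\calR\}$ you worry about has the same size as $\Pi$ but a different top vertex label, so its matching is on a disjoint set of address-cubes and there is no conflict to resolve there; your invocations of Lemma~\ref{lem:condition 1 (NEW)} and Corollary~\ref{cor:restatement of Key Lemma 2} are therefore doing no work. In the paper these lemmas are essential to show that the bottom-anchored face $c(\Pi_1,\Pi_2\setminus H)$ is free (one must rule out extending it by partitions \emph{outside} $\Pi_2$), and Example~\ref{eg:graph that breaks Condition 1 and the ensuing lemma} shows the retraction cannot be carried out in this form when Condition~1 fails. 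An argument for which Conditions 1 and 2 are decorative should be treated with suspicion. Finally, note that your target complex differs from the paper's $\res$: Proposition~\ref{prop:characterisation of resultant complex} keeps cubes whose hugged partitions all lie in $\Pi_1$ and discards cubes where $\Pi_2$ is extendable by a would-be-hugged partition, whereas your ``reduced top label'' criterion does neither. This is not by itself fatal to the theorem as stated, but it confirms that you are proving a different statement by a different (and, as it stands, incomplete) route.
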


We begin by setting out the necessary notation and giving a synopsis of the inductive process by which we will achieve this.

\begin{definition}\label{def:spiky}
    If~$\Gamma$ satisfies both Conditions 1 \& 2, then say that~$\Gamma$ is \emph{spiky}.
\end{definition}

Let~$u$ be a non-principal vertex with a (not necessarily principal) vertex~$m >_\circ u$ such that there is a non-principal vertex~$u' \in \str{m}$ with~$d_\Gamma(u, u') = 2$. Then, for any~$m' > m$, we have~$u' \in \str{m}$. In other words, if a triple~$\{u, u', m\}$ violates Condition 1 (dropping the assumption that~$m$ be principal), then the triple~$\{u, u', m'\}$ also violates Condition 1 for all~$m' >_\circ m$. Similarly, in the statement of Condition 2, we may drop the requirement that~$m$ and~$n$ be principal. It is now clear that Conditions 1 and 2 combine to the following characterisation of spikiness:
\begin{center}
    for all non-principal vertices~$u$, if~$m >_\circ u$ and~$v \in \link{m} \setminus \str{u}$, then~$d_\Gamma(u, v) > 2$.
\end{center}

Fix~$\Gamma$ spiky. In particular, both our key lemmas (\textit{\ref{lem:condition 1 (NEW)}} \& \textit{\ref{lem:condition 2 (NEW)}}) apply.

For ease of reference, we restate some important terminology and notation. Recall (see \textit{Figure \ref{fig:cube complex structure of the spine}}) that each cube in~$\spine$ can be given a unique `address'~$c(\Pi_1, \Pi_2; \alpha)$, specified by its two extreme vertices and its marking (and the marking may occasionally be omitted if not important). In particular~$\Pi_1 \subseteq \Pi_2$, and~$\dim(c(\Pi_1, \Pi_2; \alpha)) = \vert \Pi_2 \setminus \Pi_1 \vert$. We write~$V = V(\Gamma)$, and let~$L \subseteq V$ be the set of principal vertices in~$\Gamma$. Finally, recall that for a subset~$W \subseteq V$, we write~$M(W)$ for the size for the largest set of pairwise compatible~$\Gamma$-partitions such that every partition in the set may be based at an element of~$W$. In particular,~$M(V)$ is the dimension of the spine~$\spine$.

\subsection*{Discussion of retraction strategy}

A slogan for our strategy is `remove all hugged partitions at each stage'. The process is phrased as a triple (finite) induction, where the inductive hypothesis is that every cube which was treated earlier in the procedure has been retracted in a particular way. We then use spikiness to complete the inductive step. 

More specifically, we will systematically find free faces of cubes in~$\spine$. If a cube~$C$ has a free face~$F$ (that is, every cube of which~$F$ is a subface is itself a subface of~$C$), then we can perform a deformation retraction by \emph{pushing along~$F$ into~$C$} (see \textit{Figure \ref{fig:pushin (1 face)}}). Since~$F$ is a free face of~$C$, this only alters cubes which are subfaces of~$C$, and in particular replaces~$C$ with a smaller-dimensional `shell' of itself. 

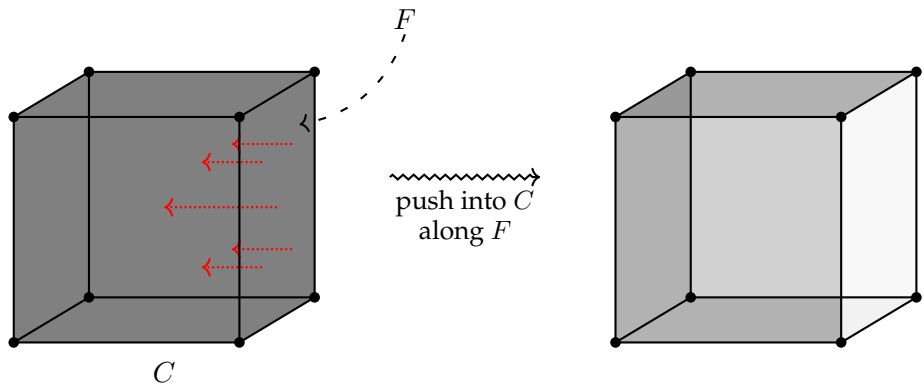
\begin{figure}[H]
    \begin{center}
        \begin{tikzpicture}
            \fill [black!50!white] (-5, 0) -- (-2, 0) -- (-1, 0.6) -- (-1, 3.6) -- (-4, 3.6) -- (-5, 3) -- (-5, 0);
            \fill [black!5!white] (6, 0) -- (6, 0.6) -- (7, 0.6) -- (6, 0);
            \fill [black!18!white] (4, 0.6) -- (6, 0.6) -- (6, 3) -- (4, 3) -- (4, 0.6);
            \fill [black!3!white] (6, 0.6) -- (6, 3) -- (7, 3.6) -- (7, 0.6) -- (6, 0.6);
            \fill [black!30!white] (3, 0) -- (6, 0) -- (6, 0.6) -- (4, 0.6) -- (4, 3) -- (3, 3) -- (3, 0);
            \fill [black!30!white] (4, 3) -- (6, 3) -- (7, 3.6) -- (4, 3.6) -- (4, 3);
            \fill [black!40!white] (3, 3) -- (4, 3) -- (4, 3.6) -- (3, 3);
            \fill (-5, 0) circle (2pt);
            \fill (-2, 0) circle (2pt);
            \fill (-1, 0.6) circle (2pt);
            \fill (-4, 0.6) circle (2pt);
            \fill (-5, 3) circle (2pt);
            \fill (-2, 3) circle (2pt);
            \fill (-1, 3.6) circle (2pt);
            \fill (-4, 3.6) circle (2pt);
            \fill (3, 0) circle (2pt);
            \fill (6, 0) circle (2pt);
            \fill (7, 0.6) circle (2pt);
            \fill (4, 0.6) circle (2pt);
            \fill (3, 3) circle (2pt);
            \fill (6, 3) circle (2pt);
            \fill (7, 3.6) circle (2pt);
            \fill (4, 3.6) circle (2pt);
            \draw [red, thick, densely dotted, ->] (-1.5, 1.8) -- (-3, 1.8);
            \draw [red, thick, densely dotted, ->] (-1.7, 2.4) -- (-2.5, 2.4);
            \draw [red, thick, densely dotted, ->] (-1.3, 2.64) -- (-2.1, 2.64);
            \draw [red, thick, densely dotted, ->] (-1.7, 1) -- (-2.5, 1);
            \draw [red, thick, densely dotted, ->] (-1.3, 1.24) -- (-2.1, 1.24);
            \draw[black, thick] (-5, 0) -- (-2, 0) -- (-1, 0.6) -- (-4, 0.6) -- (-5, 0);
            \draw[black, thick] (-5, 0) -- (-5, 3);
            \draw[black, thick] (-2, 0) -- (-2, 3);
            \draw[black, thick] (-4, 0.6) -- (-4, 3.6);
            \draw[black, thick] (-1, 0.6) -- (-1, 3.6);
            \draw[black, thick] (-5, 3) -- (-2, 3) -- (-1, 3.6) -- (-4, 3.6) -- (-5, 3);
            \draw[black, thick] (3, 0) -- (6, 0) -- (7, 0.6) -- (4, 0.6) -- (3, 0);
            \draw[black, thick] (3, 0) -- (3, 3);
            \draw[black, thick] (6, 0) -- (6, 3);
            \draw[black, thick] (4, 0.6) -- (4, 3.6);
            \draw[black, thick] (7, 0.6) -- (7, 3.6);
            \draw[black, thick] (3, 3) -- (6, 3) -- (7, 3.6) -- (4, 3.6) -- (3, 3);
            \node at (-3, -0.4) [black, thick]{$C$};
            \draw[black, thick, loosely dashed, ->] (0.2, 4.1) to[bend left] (-1.2, 2.9);
            \node at (0.2, 4.3) [black, thick]{$F$};
            \draw [black, thick, decorate, decoration={zigzag, amplitude = 0.9, segment length = 4, post = lineto, post length = 2pt}, ->] (0, 2.2) -- (2, 2.2);
            \node at (1, 1.85) [black, thick]{\small{push along~$F$}};
            \node at (1, 1.45) [black, thick]{\small{into~$C$}};
        \end{tikzpicture}
        \caption{Pushing along~$F$ into~$C$, turning~$C$ from a 3-dimensional cube to a 2-dimensional cube complex looking like an open box or shell. The (red) dotted arrows show how we visualise pushing in~$F$.}
        \label{fig:pushin (1 face)}
    \end{center}
\end{figure}

In fact, we will do slightly more radical deformation retractions than this. We will prove that the structure of~$\spine$ is such that if~$F$ and~$F'$ are both free faces of~$C$, then so is~$F \cap F'$ (observe that this is not true for arbitrary cube complexes: two cubes could `hinge' along a common codimension-2 subface, for instance). We will identify many free faces of~$C$, and prove that we can push along their (non-empty) common intersection into~$C$. By \emph{push along a subface into~$C$}, we mean a deformation retraction in which every cube not containing this subface is fixed, while the interior of every cube containing the subface is pushed onto this fixed subcomplex, as demonstrated in \textit{Figure \ref{fig:pushin (intersection of 2 faces)}}.

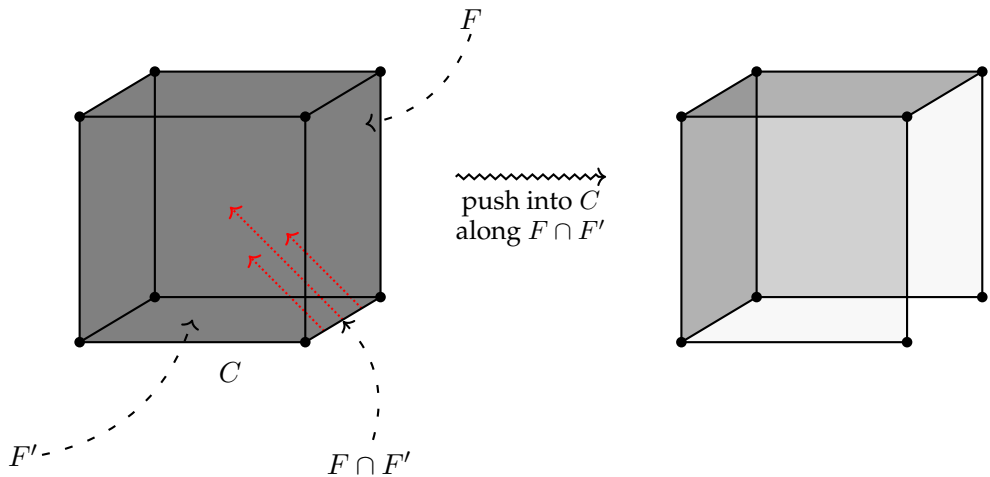
\begin{figure}
    \begin{center}
        \begin{tikzpicture}
            \fill [black!50!white] (-5, 0) -- (-2, 0) -- (-1, 0.6) -- (-1, 3.6) -- (-4, 3.6) -- (-5, 3) -- (-5, 0);
            \fill [black!3!white] (3, 0) -- (6, 0) -- (6, 0.6) -- (4, 0.6) -- (3, 0);
            \fill [black!3!white] (6, 0.6) -- (7, 0.6) -- (7, 3.6) -- (6, 3) -- (6, 0.6);
            \fill [black!18!white] (4, 0.6) -- (6, 0.6) -- (6, 3) -- (4, 3) -- (4, 0.6);
            \fill [black!30!white] (3, 0) -- (4, 0.6) -- (4, 3) -- (3, 3) -- (3, 0);
            \fill [black!30!white] (4, 3) -- (6, 3) -- (7, 3.6) -- (4, 3.6) -- (4, 3);
            \fill [black!40!white] (3, 3) -- (4, 3) -- (4, 3.6) -- (3, 3);
            \fill (-5, 0) circle (2pt);
            \fill (-2, 0) circle (2pt);
            \fill (-1, 0.6) circle (2pt);
            \fill (-4, 0.6) circle (2pt);
            \fill (-5, 3) circle (2pt);
            \fill (-2, 3) circle (2pt);
            \fill (-1, 3.6) circle (2pt);
            \fill (-4, 3.6) circle (2pt);
            \fill (3, 0) circle (2pt);
            \fill (6, 0) circle (2pt);
            \fill (7, 0.6) circle (2pt);
            \fill (4, 0.6) circle (2pt);
            \fill (3, 3) circle (2pt);
            \fill (6, 3) circle (2pt);
            \fill (7, 3.6) circle (2pt);
            \fill (4, 3.6) circle (2pt);
            \draw[red, thick, densely dotted, ->] (-1.5, 0.3) -- (-3, 1.8);
            \draw[red, thick, densely dotted, ->] (-1.75, 0.15) -- (-2.75, 1.15);
            \draw[red, thick, densely dotted, ->] (-1.25, 0.45) -- (-2.25, 1.45);
            \draw[black, thick] (-5, 0) -- (-2, 0) -- (-1, 0.6) -- (-4, 0.6) -- (-5, 0);
            \draw[black, thick] (-5, 0) -- (-5, 3);
            \draw[black, thick] (-2, 0) -- (-2, 3);
            \draw[black, thick] (-4, 0.6) -- (-4, 3.6);
            \draw[black, thick] (-1, 0.6) -- (-1, 3.6);
            \draw[black, thick] (-5, 3) -- (-2, 3) -- (-1, 3.6) -- (-4, 3.6) -- (-5, 3);
            \draw[black, thick] (3, 0) -- (6, 0);
            \draw[black, thick] (3, 0) -- (4, 0.6);
            \draw[black, thick] (4, 0.6) -- (7, 0.6);
            \draw[black, thick] (3, 0) -- (3, 3);
            \draw[black, thick] (6, 0) -- (6, 3);
            \draw[black, thick] (4, 0.6) -- (4, 3.6);
            \draw[black, thick] (7, 0.6) -- (7, 3.6);
            \draw[black, thick] (3, 3) -- (6, 3) -- (7, 3.6) -- (4, 3.6) -- (3, 3);
            \node at (-3, -0.4) [black, thick]{$C$};
            \draw[black, thick, loosely dashed, ->] (0.2, 4.1) to[bend left] (-1.2, 2.9);
            \node at (0.2, 4.3) [black, thick]{$F$};
            \draw[black, thick, loosely dashed, ->] (-5.5, -1.5) to[bend right] (-3.5, 0.3);
            \node at (-5.75, -1.5) [black, thick]{$F'$};
            \draw[black, thick, loosely dashed, ->] (-1.1, -1.3) to[bend right] (-1.5, 0.28);
            \node at (-1.15, -1.6) [black, thick]{$F \cap F'$};
            \draw [black, thick, decorate, decoration={zigzag, amplitude = 0.9, segment length = 4, post = lineto, post length = 2pt}, ->] (0, 2.2) -- (2, 2.2);
            \node at (1, 1.85) [black, thick]{\small{push into~$C$}};
            \node at (1, 1.45) [black, thick]{\small{along~$F \cap F'$}};
        \end{tikzpicture}
        \caption{Here we are pushing along the intersection~$F \cap F'$ into~$C$. The interiors of the faces~$F$,~$F'$,~$F \cap F'$, as well as the interior of~$C$, are all pushed onto the 2-dimensional subcomplex shown on the right.}
        \label{fig:pushin (intersection of 2 faces)}
    \end{center}
\end{figure}

We proceed by finding free faces of~$\spine$, retracting these as described (and doing so equivariantly). Then we look for more free faces in the resultant complex; when we can no longer find free faces, we stop. If we have managed to retract all cubes of dimension greater than some~$k$, then the complex we have obtained has dimension at most~$k$. 

We identify free faces by examining the structure of compatible sets of~$\Gamma$-partitions with respect to the hugged partitions they contain. Intuitively, if a set~$\Pi$ of partitions contains hugged partitions (or if non-principal partitions not in~$\Pi$ would be hugged in~$\Pi$ upon being added to~$\Pi$), then~$\Pi$ contains some redundant information. Partitions which are hugged in~$\Pi$ contribute roughly the same structure to~$\spine$ as the principal partitions which hug them -- and this turns out to result in~$\spine$ having many free faces. Our procedure stops when we have no more hugged partitions, so we have a characterisation of the final complex in terms of hugged partitions (which we state in \textit{Proposition \ref{prop:characterisation of resultant complex}}).

\begin{remark}
    As outlined at the start of \S\ref{sec:retraction process}, it is in this sense that the definition of `hugged' partitions is a notion of redundancy among sets of pairwise compatible~$\Gamma$-partitions. We make no claim that this is an optimal definition. There are certainly more general definitions which capture a similar notion of redundancy which we have decided not to include in the present document, as the added generality did not sufficiently outweigh the longer and less intuitive definitions (and their associated conditions). Moreover, there may well be finer notions of redundancy which allow one to identify more free faces. Then an inductive process very similar to the one outlined below may result in a more complete retraction of~$\spine$, or allow the removal or weakening of Conditions 1 \& 2.
\end{remark}

One of the challenges is to define a procedure which guarantees that after each stage of retractions, one still has free faces. We consider cubes~$c(\Pi_1, \Pi_2)$ in a particular order, looking for free faces in each. The induction will be with respect to the following variables:

\begin{enumerate}[(i)]
    \item~$b \;\text{($\;=b(\Pi_1)$)} \coloneqq \vert \Pi_1 \vert$;
    \item~$t \;\text{($\;=t(\Pi_2)$)} \coloneqq M(V) - \vert \Pi_2 \vert$;
    \item~$p \;\text{($\;=p(\Pi_2)$)} \coloneqq M(L) - \vert \{\text{principal partitions in~$\Pi_2$}\} \vert$.
\end{enumerate}

Here,~$b$ is counting how far away the bottom vertex of the cube~$c(\Pi_1, \Pi_2)$ is from a Salvetti; that is,~$b = \dim\left(c(\emptyset, \Pi_1)\right)$. Similarly,~$t$ is a measure of how far away the top vertex of~$c(\Pi_1, \Pi_2)$ is from being a blowup by a set of partitions of size~$M(V)$ (however, note that there does not necessarily exist a superset~$\Pi \supseteq \Pi_2$ with~$\vert \Pi \vert = M(V)$). Finally,~$p$ is a measure of how far away~$\Pi_2$ is from containing the maximal possible number of principal partitions: if~$p = 0$, then one cannot add any principal partitions to~$\Pi_2$; the maximal value~$p$ can take is~$M(L)$.

The outermost (i.e.~the slowest-moving) variable in this induction is~$b$, followed by~$t$ and then by~$p$. We will therefore start with~$b = 0$, that is, by considering cubes whose bottom vertex is a marked Salvetti. Working at a fixed value of~$b$, we begin with~$t = 0$, that is, with cubes whose top vertex corresponds to a blowup by a set of partitions of maximal possible size,~$M(V)$. Finally, for each fixed pair~$(b, t)$, we start with~$p = 0$ -- that is, among cubes~$c(\Pi_1, \Pi_2)$ with~$\vert \Pi_1 \vert = b$ and~$M(V) - \vert \Pi_2 \vert = t$, we begin by considering those~$\Pi_2$ containing the maximal possible number of principal partitions.

It transpires that the base case~$b = 0$ is the most complex, so for this reason the exposition has been split into first (\S\ref{subsubsec: retractions at salvettis}) a double induction on~$t$ and~$p$ where we hold~$b = 0$, and then (\S\ref{subsubsec: retractions away from salvettis}) an application of this work to a single induction on~$b$.

\subsection*{The retraction in detail}

We now set out in detail how we equivariantly retract the spine~$\spine$ to a subcomplex, for~$\Gamma$ spiky. In \S\ref{sec:realising vcd}, we will prove that the complex obtained at the end of this process is sometimes of smaller dimension than the spine, providing an upper bound on~$\vcduag$ which is tighter than~$\dim(\spine)$.

\subsubsection{\textbf{Case I:}~$b = 0$; retractions at marked Salvettis. \\}\label{subsubsec: retractions at salvettis}

The initial double induction on~$t$ and~$p$, with~$b$ fixed at 0, is concerned with cubes with a vertex at a marked Salvetti complex. 

\textbf{Case Ia:}~$b = 0$,~$t = 0$,~$p = 0$. 

That is, we are considering those cubes where~$\Pi_2$ is a maximally-sized set of pairwise compatible partitions, containing a full complement of~$M(L)$ principal partitions. Note that such a cube may not exist; if so we may ignore this case and begin the induction at the next step.

Fix any such~$\Pi_2$, and let~$H$ be its set of hugged non-principal partitions. We would like to be able to use the face~$c(\emptyset, \Pi_2 \setminus H$) to retract~$c(\emptyset, \Pi_2)$ (and all cubes in between), as illustrated in the following schematic.

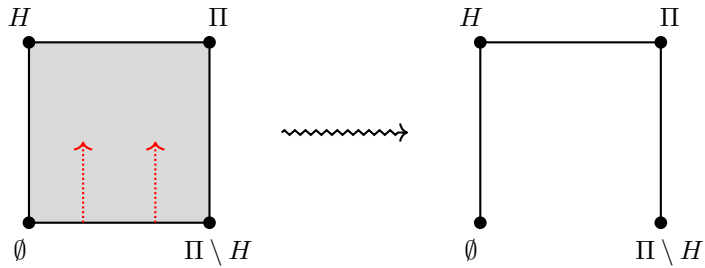
\begin{figure}[H]
    \begin{center}
        \begin{tikzpicture}[scale = 1.2]
            \fill[black!15!white] (0, 0) to (2, 0) to (2, 2) to (0, 2) to (0, 0);
            \fill (0, 0) circle (2pt);
            \fill (2, 0) circle (2pt);
            \fill (0, 2) circle (2pt);
            \fill (2, 2) circle (2pt);
            \fill (5, 0) circle (2pt);
            \fill (7, 0) circle (2pt);
            \fill (5, 2) circle (2pt);
            \fill (7, 2) circle (2pt);
            \draw[black, thick] (0, 0) to (2, 0) to (2, 2) to (0, 2) to (0, 0);
            \draw[black, thick] (5, 0) to (5, 2);
            \draw[black, thick] (5, 2) to (7, 2);
            \draw[black, thick] (7, 0) to (7, 2);
            \draw[red, thick, densely dotted, ->] (0.6, 0) to (0.6, 0.9);
            \draw[red, thick, densely dotted, ->] (1.4, 0) to (1.4, 0.9);
            \node at (-0.1, 0) [below = 2pt, black, thick]{\small{$\emptyset$}};
            \node at (2.1, 0) [below = 2pt, black, thick]{\small{$\Pi \setminus H$}};
            \node at (-0.1, 2) [above = 2pt, black, thick]{\small{$H$}};
            \node at (2.1, 2) [above = 2pt, black, thick]{\small{$\Pi$}};
            \node at (4.9, 0) [below = 2pt, black, thick]{\small{$\emptyset$}};
            \node at (7.1, 0) [below = 2pt, black, thick]{\small{$\Pi \setminus H$}};
            \node at (4.9, 2) [above = 2pt, black, thick]{\small{$H$}};
            \node at (7.1, 2) [above = 2pt, black, thick]{\small{$\Pi$}};
            \draw [black, thick, decorate, decoration={zigzag, amplitude = 0.9, segment length = 4, post = lineto, post length = 2pt}, ->] (2.8, 1) -- (4.2, 1);
        \end{tikzpicture}
        \caption{A schematic illustrating pushing along~$c(\emptyset, \Pi \setminus H)$ into~$c(\emptyset, \Pi)$.}
    \end{center}
    \label{fig:schematic of pushing along non-hugged partitions}
\end{figure}

We examine which partitions could be added to the set~$\Pi_2 \setminus H$. We cannot add any principal partitions to it, since~$\Pi_2 \setminus H$ already contains~$M(L)$ principal partitions. On the other hand, by \textit{Lemma \ref{lem:condition 1 (NEW)}}, any non-principal partition compatible with~$\Pi_2 \setminus H$ would necessarily be compatible with all of~$H$ too, so since~$\Pi_2$ was maximally-sized, any such is an element of~$H$. In other words, the only partitions we may add to~$\Pi_2 \setminus H$ are elements of~$H$, so the only cubes containing~$c(\emptyset, \Pi_2 \setminus H)$ are contained in~$c(\emptyset, \Pi_2)$. Since~$c(\emptyset, \Pi_2 \setminus H; \alpha)$ has a vertex at a Salvetti complex with marking~$\alpha$, any other cube containing it as a subface necessarily also has marking~$\alpha$. We can now perform a deformation retraction by pushing into the cube~$c(\emptyset, \Pi_2)$ along its free subface~$c(\emptyset, \Pi_2 \setminus H)$. Doing this simultaneously at all marked Salvetti complexes is a~$\uag$-equivariant retraction, since~$\uag$ acts only by changing markings. We do the same retractions for all cubes~$c(\emptyset, \Pi_2)$ with~$\Pi_2$ having parameters~$t = 0,\; p = 0$, which completes the~$p = 0$ base case.

\textbf{Case Ib:}~$b = 0$,~$t = 0$,~$p = k > 0$.

Our inductive hypothesis is that all cases~$t = 0,\; p < k$ have been handled in the above way: all hugged partitions were removed, and then the retraction was effected by pushing in along this subface. Now fix~$\Pi_2$ with~$t = 0$ and~$p = k$; let~$H$ be its set of hugged partitions and consider~$\Pi \setminus H$. Once again, we wish to prove that~$c(\emptyset, \Pi_2 \setminus H)$ is a free face of~$c(\emptyset, \Pi_2)$. 

If~$\calP$ is a principal partition which is compatible with all of~$\Pi_2 \setminus H$, then by \emph{Corollary \ref{cor:restatement of Key Lemma 2}}, and since~$\Pi_2$ is maximally-sized, there is exactly one~$\calQ \in H$ with which~$\calP$ is not compatible. Thus we may form the cube~$c(\emptyset, (\Pi_2 \setminus \calQ) \cup \calP)$. This falls under the case~$t = 0,\; p = k-1$, so by the inductive hypothesis, has already been retracted. Moreover, it was retracted via the subface obtained by removing all of its hugged partitions -- which in particular involves removing all of~$H \setminus \calQ$. Hence, the cube~$c(\emptyset, (\Pi_2 \setminus H) \cup \calP)$ has already been removed. 

Instead let~$\calQ'$ be a non-principal partition which is compatible with all of~$\Pi_2 \setminus H$. Then by \textit{Lemma \ref{lem:condition 1 (NEW)}} it is compatible with all of~$H$; since~$t = 0$ (that is,~$\Pi_2$ is maximally-sized), we must have~$\calQ' \in H$. Therefore, for each marking~$\alpha$, the only cubes containing~$c(\emptyset, \Pi_2 \setminus H; \alpha)$ are themselves contained in~$c(\emptyset, \Pi_2; \alpha)$. Hence, for each marking~$\alpha$, we may retract~$c(\emptyset, \Pi_2; \alpha)$ by pushing in along~$c(\emptyset, \Pi_2 \setminus H; \alpha)$; this is once again~$\uag$-equivariant. Do this simultaneously for all~$\Pi_2$ with~$t = 0,\; p = k$; doing this for each possible~$p = k > 0~$ completes the~$t = 0$ step.

\textbf{Case Ic:}~$b = 0$,~$t = l > 0$,~$p = k \geq 0$. 

Let~$\Pi_2$ be a set of pairwise compatible partitions with these values of~$t$ and~$p$, and let~$H$ be its set of hugged partitions. As before, we aim to prove that~$c(\emptyset, \Pi_2 \setminus H)$ is a free face of~$c(\emptyset, \Pi_2)$. 

Let~$\calP$ be a principal partition compatible with all of~$\Pi_2 \setminus H$. Suppose first that~$\calP$ is also compatible with all of~$H$. Then the set~$\Pi_2 \cup \calP$ of pairwise compatible partitions has parameters~$t = l-1,\; p = k$, so by the inductive hypothesis, has already been retracted. This retraction happened by removing all the hugged partitions in~$\Pi_2 \cup \calP$, which in particular includes all of~$H$. Hence the cube~$c(\emptyset, (\Pi_2 \setminus H) \cup \calP)$ is no longer present in our complex. 

Instead suppose that~$\calP$ is not compatible with all of~$H$, so by \textit{Lemma \ref{lem:condition 2 (NEW)}} there is exactly one~$\calQ \in H$ with which~$\calP$ is not compatible. We can therefore replace~$\calQ$ with~$\calP$, and obtain the compatible set~$(\Pi_2 \setminus \calQ) \cup \calP$. This has parameters~$t = l,\; p = k-1$, and so the cube~$c(\emptyset, (\Pi_2 \setminus H) \cup \calP)$ has also already been dealt with previously in the process. The set of hugged partitions in this case contains~$H \setminus \calQ$, so the subface used to retract this cube was contained in~$c(\emptyset, (\Pi_2 \setminus H) \cup \calP)$, so we have already removed~$c(\emptyset, (\Pi_2 \setminus H) \cup \calP)$.

Now let~$\calQ'$ be a non-principal partition compatible with all of~$\Pi_2 \setminus H$. By \textit{Lemma \ref{lem:condition 1 (NEW)}},~$\calQ'$ is necessarily compatible with all of~$H$. Suppose that~$\calQ' \notin H$. Then~$\Pi_2 \cup \calQ'$ is a pairwise compatible set of partitions, with parameters~$t = l-1$ and~$p = k$. By the inductive hypothesis, this was retracted using the subface corresponding to the removal of all the hugged partitions in~$\Pi_2 \cup \calQ'$; note that all of~$H$ is hugged in~$\Pi_2 \cup \calQ'$, and so the cube~$c(\emptyset, (\Pi_2 \setminus H) \cup \calQ')$ has already been removed. (Note that if~$\calQ'$ is also hugged in this set, then the cube~$c(\emptyset, \Pi_2 \cup \calQ')$ was retracted via the subface~$c(\emptyset, \Pi_2 \setminus H)$. This retraction will have taken with it the cube~$c(\emptyset, \Pi_2)$ -- so in this case there is nothing to do.)

We are left in the case~$\calQ' \in H$ -- now the cube~$c(\emptyset, (\Pi_2 \setminus H) \cup \calQ')$ is contained in~$c(\emptyset, \Pi_2)$ and will get retracted when we push in along~$c(\emptyset, \Pi_2 \setminus H)$.

Hence, at this stage of our retraction process, the only cubes which contain~$c(\emptyset, \Pi_2 \setminus H)$ are themselves contained in~$c(\emptyset, \Pi_2)$ -- so we may push into~$c(\emptyset, \Pi_2)$ along~$c(\emptyset, \Pi_2 \setminus H)$. Once again, we do this simultaneously for every marking; this is again~$\uag$-equivariant. We may do this simultaneously for all cubes~$c(\emptyset, \Pi_2)$ where~$\Pi_2$ has parameters~$t = l,\; p = k$, which completes the induction step.

This inductive process completes the~$b = 0$ step.

\subsubsection{\textbf{Case II:}~$b > 0$; retractions away from marked Salvetti complexes. \\}\label{subsubsec: retractions away from salvettis}

Now suppose that~$b = m > 0$. Our inductive hypothesis now has two parts:

\begin{enumerate}[(I)]
    \item For all markings~$\beta$, we have retracted all cubes of the form~$c(\Pi_1', \Pi_2'; \beta)$ where~$\vert \Pi_1' \vert < b$.
    \item For all markings~$\beta$, we have also retracted all cubes of the form~$c(\Pi_1', \Pi_2'; \beta)$ where we have~$\vert \Pi_1' \vert = b$ and where~$c(\emptyset, \Pi_2')$ would have been retracted before~$c(\emptyset, \Pi_2)$ in the~$b = 0$ step (that is,~$t(\Pi_2') < t(\Pi_2)$, or~$t(\Pi_2') = t(\Pi_2)$ and~$p(\Pi_2') < p(\Pi_2)$).

     Also assume that in each case, retraction has been done via the subface~$c(\Pi_1', \Pi_2' \setminus H'; \beta)$, where~$H'$ is the set of non-principal partitions in~$\Pi_2' \setminus \Pi_1'$ which are hugged in~$\Pi_2'$.
\end{enumerate}

Now consider some cube~$c(\Pi_1, \Pi_2; \alpha)$, with parameters~$b = m > 0,\; t = l \geq 0,\; p = k \geq 0$. Let~$H$ be the set of non-principal partitions in~$\Pi_2 \setminus \Pi_1$ which are hugged in~$\Pi_2$. Consider the subface~$c(\Pi_1, \Pi_2 \setminus H; \alpha)$; we wish to prove that this is a free face of~$c(\Pi_1, \Pi_2; \alpha)$. We do this by examining which cubes in our part-retracted complex could contain this subface. First, we restrict our attention to other cubes with marking~$\alpha$.

We could remove some partition~$\calR$ from~$\Pi_1$, yielding a subface~$c(\Pi_1 \setminus \calR, \Pi_2 \setminus H; \alpha)$. Now, if~$\calR$ was hugged in~$\Pi_2$, then the cube~$c(\Pi_1 \setminus \calR, \Pi_2 \setminus (H \cup \calR); \alpha)$ was used to retract~$c(\Pi_1 \setminus \calR, \Pi_2; \alpha)$ in the~$b = m-1$ case (using part (I) of the inductive hypothesis). In particular, this retraction will also have removed~$c(\Pi_1, \Pi_2; \alpha)$, so we may ignore this case. If instead~$\calR$ was not hugged in~$\Pi_2$, then~$c(\Pi_1 \setminus \calR, \Pi_2; \alpha)$ was retracted by pushing in along~$c(\Pi_1 \setminus \calR, \Pi_2 \setminus H; \alpha)$. In either case, we need not consider the cube~$c(\Pi_1 \setminus \calR, \Pi_2 \setminus H; \alpha)$.

Instead, we could add a partition~$\calR$ to~$\Pi_2$, to get the cube~$c(\Pi_1, (\Pi_2 \setminus H) \cup \calR; \alpha)$. We can ignore the case~$\calR \in H$, since pushing in to~$c(\Pi_1, \Pi_2; \alpha)$ along~$c(\Pi_1, \Pi_2 \setminus H; \alpha)$ would remove the face~$c(\Pi_1, (\Pi_2 \setminus H) \cup \calR; \alpha)$ anyway. So now by (II), the cube~$c(\Pi_1, \Pi_2 \cup \calR; \alpha)$ has already been retracted (since it corresponds to the case~$t = k-1$). This retraction used the subface obtained by removing all the non-principal partitions in~$(\Pi_2 \cup \calR) \setminus \Pi_1$ which are hugged in~$\Pi_2 \cup \calR$; in particular, this includes removing all of~$H$. Hence, the cube~$c(\Pi_1, (\Pi_2 \setminus H) \cup \calR; \alpha)$ has already been removed in a previous retraction. (Once again, if~$\calR$ is (non-principal and) hugged in~$\Pi_2$, then we may move on, as the cube~$c(\Pi_1, \Pi_2; \alpha)$ was also removed in this retraction.)

Therefore, the only cubes with marking~$\alpha$ which contain~$c(\Pi_1, \Pi_2 \setminus H; \alpha)$ are themselves contained in~$c(\Pi_1, \Pi_2; \alpha)$. We now turn our attention to cubes with other markings. We have the following general lemma, which we need some terminology to state: we say that a compatible set of~$\Gamma$-partitions~$\Pi$ is \emph{inextendible} if no~$\Gamma$-partitions can be added to~$\Pi$. That is, there are no~$\Gamma$-partitions not in~$\Pi$ which are compatible with all elements of~$\Pi$.

\begin{lemma}\label{lem:meet on (n-1)-face}
    Let~$\Gamma$ be a simplicial graph and~$\Pi$ be an inextendible set of~$\Gamma$-partitions. Let~$\alpha$,~$\beta$ be (distinct, non-equivalent) markings of the~$\Gamma$-complex~$\salb{\Pi}$. Suppose that the cubes~$\mathcal{C}_{\alpha} := c(\emptyset, \Pi; \alpha)$ and~$\mathcal{C}_{\beta} := c(\emptyset, \Pi; \beta)$ meet along some common face. Then this face is contained in a face common to both cubes which contains the vertex corresponding to the maximal blowup,~$\salb{\Pi}$.
\end{lemma}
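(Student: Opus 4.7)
The plan is to propagate the identification of vertices within the common face $F$ upward in both cubes, eventually identifying the two top vertices as a single vertex of $\spine$. Writing $F = c(A, B; \alpha)$ as a face of $\mathcal{C}_\alpha$ and $F = c(A', B'; \beta)$ as a face of $\mathcal{C}_\beta$, each vertex of $F$ is a marked $\Gamma$-complex given in two ways, which yields a cube-complex isomorphism between the two underlying blowups intertwining the markings up to homotopy. At the top vertex $v_+$ of $F$ in particular, this gives an iso $i \colon \salb{B'} \to \salb{B}$ with $\alpha_B \circ i \simeq \beta_{B'}$, where $\alpha_B$ and $\beta_{B'}$ are the markings of $\salb{B}$ and $\salb{B'}$ inherited from $\alpha$ and $\beta$ via hyperplane collapse.

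The crucial ingredient is the structural fact recalled from \cite{CharneyStambaughVogtmann17} that $i$ must carry the canonical (partition-labelled) treelike set of $\salb{B'}$ to some treelike set in $\salb{B}$, and after a graph automorphism $\phi \in \operatorname{Aut}(\Gamma)$ is used to adjust labels and orientations on the vertex-labelled hyperplanes, the image coincides with the canonical treelike set of $\salb{B}$. The induced action of $\phi$ on $\Gamma$-partitions then gives a bijection with $\phi(B') = B$. Next I would extend $\phi$ to a bijection $\Pi \to \Pi$: for each $\calP \in \Pi \setminus B$ the partition $\phi^{-1}(\calP)$ is a $\Gamma$-partition compatible with $B'$, and using the hyperplane structure of $\salb{\Pi}$ on both sides together with inextendibility of $\Pi$ one checks that $\phi^{-1}(\calP)$ must actually belong to $\Pi$. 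A cardinality count forces $\phi(\Pi) = \Pi$, promoting $i$ to an iso $\tilde{i} \colon \salb{\Pi} \to \salb{\Pi}$ of the full blowups; marking-compatibility at $v_+$ then lifts to $\alpha \circ \tilde{i} \simeq \beta$.

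This identifies the top vertices $[(\salb{\Pi}, \alpha)]$ and $[(\salb{\Pi}, \beta)]$ as a single vertex of $\spine$, and the upward closures $c(A, \Pi; \alpha) = c(A', \Pi; \beta)$ then form a single common face of both cubes, containing $F$ and the vertex corresponding to $\salb{\Pi}$, as required. The main obstacle is the extension step that turns the local iso $i$ on $\salb{B}$ into a marking-compatible iso $\tilde{i}$ on $\salb{\Pi}$. Inextendibility of $\Pi$ is essential here, as it rules out the graph automorphism $\phi$ sending partitions in $\Pi$ partially outside of $\Pi$; the rest is a careful tracking of hyperplane labels under $i$ and of marking-inheritance under the collapse maps that produce the induced markings $\alpha_B$ and $\beta_{B'}$.
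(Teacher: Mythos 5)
Your overall strategy is the paper's: identify the two markings at a vertex of the shared face and propagate that identification up to the maximal blowup, so that the top vertices of $\mathcal{C}_\alpha$ and $\mathcal{C}_\beta$ coincide and the shared face lies inside a common face containing $\salb{\Pi}$. The paper, however, executes the propagation far more directly. It takes \emph{any} vertex $\salb{\Phi}$ of the common face, uses the fact that $(\salb{\Phi},\alpha_\Phi)$ and $(\salb{\Phi},\beta_\Phi)$ are equivalent marked $\Gamma$-complexes to produce an isomorphism $h_\Phi$ with $\alpha_\Phi\simeq\beta_\Phi\circ h_\Phi$, and then simply conjugates by the collapse $c\colon\salb{\Pi}\to\salb{\Phi}$: setting $h:=c^{-1}\circ h_\Phi\circ c$ gives $\alpha\simeq\beta\circ h$, hence $(\salb{\Pi},\alpha)\sim(\salb{\Pi},\beta)$. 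No treelike-set bookkeeping, no graph automorphism $\phi$, and no partition-by-partition analysis of $\Pi$ is needed (the paper is admittedly terse about why $h$ is a genuine cube-complex isomorphism rather than just a homotopy equivalence, but that is the whole content of its argument).

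The one place your write-up does not close is precisely the step you flag as the main obstacle. Inextendibility of $\Pi$ says that a $\Gamma$-partition compatible with \emph{every} element of $\Pi$ must already lie in $\Pi$; establishing that $\phi^{-1}(\calP)$ is compatible with $B'$ — a subset of $\Pi$ — does not trigger this. A priori $\phi^{-1}$ carries $\Pi$ to \emph{some} inextendible compatible set containing $B'$, not necessarily back to $\Pi$ itself, so the deduction $\phi(\Pi)=\Pi$ and the ensuing promotion of $i$ to $\tilde{i}$ are not justified as stated. Rather than repairing this, note that the conjugation trick above sidesteps the extension problem entirely: the identification of the two top vertices is obtained formally from the identification at a lower vertex, without ever matching $\Pi\setminus B$ with $\Pi\setminus B'$.
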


\begin{proof}
Let~$\Phi$ be a subset of~$\Pi$ such that the blow-up along~$\Phi$ is a vertex contained in the common face of~$\mathcal{C}_{\alpha}$ and~$\mathcal{C}_{\beta}$. If~$\Phi = \Pi$, we are done, so assume not.

Let~$c : \salb{\Pi} \to \salb{\Phi}$ be the collapse along the hyperplane set~$\Pi \setminus \Phi$. Let~$\alpha_{\Phi} : \salb{\Phi} \to \sal$ be the collapse to the Salvetti in the cube~$\mathcal{C}_{\alpha}$ and similarly define~$\beta_{\Phi}$ (note that these are necessarily collapses to different marked Salvettis, since~$\alpha$ and~$\beta$ were non-equivalent). 

We have that~$(\salb{\Phi}, \alpha_{\Phi})$ and~$(\salb{\Phi}, \beta_{\Phi})$ are equivalent marked~$\Gamma$-complexes (recall \S\ref{def:untwisted spine}), so there exists an isomorphism~$h_{\Phi} : \salb{\Phi} \to \salb{\Phi}$ such that~$\alpha_{\Phi} \simeq \beta_{\Phi} \circ h_{\Phi}$.

Since~$(\salb{\Phi}, \alpha_{\Phi})$ is obtained from~$(\salb{\Pi}, \alpha)$ by a hyperplane collapse, we have~$\alpha_{\Phi} = \alpha \circ c^{-1}$. Similarly,~$\beta_{\Phi} = \beta \circ c^{-1}$. Hence,
\begin{eqnarray*}
    \alpha & = & \alpha_{\Phi} \circ c \\
    & \simeq & (\beta_{\Phi} \circ h_{\Phi}) \circ c \\
    & = & \beta \circ c^{-1} \circ h_{\Phi} \circ c.
\end{eqnarray*}

So, defining~$h:\salb{\Pi} \to \salb{\Pi}$ as~$h := c^{-1} \circ h_{\Phi} \circ c$, we see that in fact~$(\salb{\Pi}, \alpha) \sim (\salb{\Pi}, \beta)$, via this isomorphism~$h$. Thus the common face of~$\mathcal{C}_{\alpha}$ and~$\mathcal{C}_{\beta}$ must contain this marked~$\Gamma$-complex.
\end{proof}

If~$c(\Pi_1, \Pi_2 \setminus H; \alpha)$ has already been retracted, we need not consider it, so suppose it has not. Now, suppose that~$c(\Pi_1, \Pi_2 \setminus H; \alpha)$ is a subface of a cube~$C$ with some other marking~$\beta$. Let~$\Pi$ be the blowup corresponding to the top vertex of~$C$; then the cubes~$c(\emptyset, \Pi; \alpha)$ and~$c(\emptyset, \Pi; \beta)$ can be taken to be~$C_\alpha$ and~$C_\beta$ respectively in \textit{Lemma \ref{lem:meet on (n-1)-face}}. Then~$C$ is identified with a face which has marking~$\alpha$. Hence, the only cubes containing~$c(\Pi_1, \Pi_2 \setminus H; \alpha)$ are those contained in~$c(\Pi_1, \Pi_2; \alpha)$. Hence, we can push in to~$c(\Pi_1, \Pi_2; \alpha)$ along~$c(\Pi_1, \Pi_2 \setminus H; \alpha)$; we do this simultaneously for every marking. This is once again a~$\uag$-equivariant retraction. 

Doing this for every cube~$c(\Pi_1, \Pi_2)$ with~$\vert \Pi_1 \vert = b$ completes the induction step and the proof of the first statement of \emph{Theorem \ref{thm:retraction process (D,(i))}}.

\subsection{Resultant complex}\label{subsec:resultant complex}

It is natural to ask about the cube complex obtained upon completion of the retraction process detailed above. Can we characterise which cubes survive?

For~$\Gamma$ spiky, denote the complex resulting from this retraction process by~$\res$.

\begin{proposition}\label{prop:characterisation of resultant complex}
    Let~$\Gamma$ be spiky. A cube~$c(\Pi_1, \Pi_2; \alpha)$ in~$\spine$ is also found in~$\res$ if and only if~$\Pi_2 \setminus \Pi_1$ contains no non-principal partitions which are hugged in~$\Pi_2$, and additionally one cannot add a new non-principal partition to~$\Pi_2$ which would be hugged in~$\Pi_2$. 
\end{proposition}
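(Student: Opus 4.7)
The plan is to trace through which cubes the retraction process of~\S\ref{subsec:retraction process} actually removes, and then verify both implications directly. Recall that when the retraction reaches a cube~$c(\Pi_1', \Pi_2'; \beta)$, it pushes along the subface~$c(\Pi_1', \Pi_2' \setminus H'; \beta)$, where~$H'$ is the set of non-principal partitions in~$\Pi_2' \setminus \Pi_1'$ hugged in~$\Pi_2'$; this push eliminates exactly the interiors of the faces~$c(\Pi_1', B; \beta)$ with~$\Pi_2' \setminus H' \subseteq B \subseteq \Pi_2'$ (and nothing is eliminated when~$H' = \emptyset$). A crucial consequence, used throughout, is that such a push only affects faces with the same bottom-vertex set~$\Pi_1'$ as the cube being processed.

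For the forward direction, argue contrapositively. If condition~(1) fails, then~$H \neq \emptyset$, so when~$c(\Pi_1, \Pi_2; \alpha)$ is itself processed the free face~$c(\Pi_1, \Pi_2 \setminus H; \alpha)$ is a proper subface and the resulting push eliminates the interior of~$c(\Pi_1, \Pi_2; \alpha)$. If condition~(2) fails, there exists a non-principal~$\calQ' \notin \Pi_2$ compatible with~$\Pi_2$ and hugged in~$\Pi_2 \cup \calQ'$. The cube~$c(\Pi_1, \Pi_2 \cup \calQ'; \alpha)$ has smaller~$t$-parameter than~$c(\Pi_1, \Pi_2; \alpha)$ and is therefore processed earlier; its associated set~$H'$ contains~$\calQ'$, so~$(\Pi_2 \cup \calQ') \setminus H' \subseteq \Pi_2 \subsetneq \Pi_2 \cup \calQ'$ and~$c(\Pi_1, \Pi_2; \alpha)$ is removed at that earlier stage.

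For the reverse direction, assume both conditions hold. By the observation above we need only rule out pushes at cubes~$c(\Pi_1, \Pi_2'; \alpha)$ with~$\Pi_2 \subseteq \Pi_2'$. When~$\Pi_2' = \Pi_2$, condition~(1) gives~$H = \emptyset$ and no retraction occurs. If~$\Pi_2 \subsetneq \Pi_2'$ and this push were to remove our cube, then~$\Pi_2' \setminus \Pi_2 \subseteq H'$; picking any~$\calQ' \in \Pi_2' \setminus \Pi_2$, it would be non-principal and hugged in~$\Pi_2'$ by two principal partitions~$\calP_1, \calP_2 \in \Pi_2'$. Since the elements of~$\Pi_2' \setminus \Pi_2 \subseteq H'$ are all non-principal, the principal~$\calP_i$ must lie in~$\Pi_2$, whence~$\calQ'$ is hugged in~$\Pi_2 \cup \calQ'$ as well, contradicting~(2). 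Retractions at cubes with different markings reduce to this case by \emph{Lemma~\ref{lem:meet on (n-1)-face}}, exactly as in the proof of \emph{Theorem~\ref{thm:retraction process (D,(i))}}. The main bookkeeping subtlety is the observation that hugging witnesses are always principal, which is what converts ``hugged in~$\Pi_2'$'' into ``hugged in~$\Pi_2 \cup \calQ'$'' and makes condition~(2) precisely the right obstruction.
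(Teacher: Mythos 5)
Your proof is correct and follows essentially the same route as the paper: it traces through the retraction process to identify exactly which cubes get removed, using the fact that each push only affects cubes with the same bottom-vertex set and marking (via \emph{Lemma~\ref{lem:meet on (n-1)-face}}). You make explicit the key bookkeeping point that hugging witnesses are always principal (so ``hugged in~$\Pi_2'$'' passes to ``hugged in~$\Pi_2 \cup \calQ'$''), which the paper's shorter proof leaves implicit; this is a useful clarification but does not change the argument.
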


\begin{proof}
    Suppose that~$\Pi_2 \setminus \Pi_1$ contains non-principal partitions which are hugged in~$\Pi_2$. Then the cube~$c(\Pi_1, \Pi_2; \alpha)$ is retracted during the process by removing these hugged partitions and pushing in along the corresponding subface. Now suppose that~$\Pi_2 \setminus \Pi_1$ does not contain any partitions hugged in~$\Pi_2$, but one could add some partition~$\calQ$ to~$\Pi_2$ such that~$\calQ$ is hugged in~$\Pi_2 \cup \calQ$. Then the cube~$c(\Pi_1, \Pi_2 \cup \calQ; \alpha)$ will have been retracted via~$c(\Pi_1, \Pi_2; \alpha)$. In both cases,~$c(\Pi_1, \Pi_2; \alpha)$ is not found in~$\res$.

    Conversely, the only way for a cube~$c(\Pi_1, \Pi_2)$ to be retracted is for either~$\Pi_2 \setminus \Pi_1$ to contain hugged partitions, or to be the face along which we push in. In the latter case, necessarily we can add a non-principal partition~$\calQ$ to~$\Pi_2$ which becomes hugged in~$\Pi_2 \cup \calQ$.
\end{proof}

This proposition motivates the following definition of a `redundant' cube in~$\spine$. 

\begin{definition}\label{def:redundant cube}
    A cube~$c(\Pi_1, \Pi_2)$ in~$\spine$ is \emph{redundant} if~$\Pi_2 \setminus \Pi_1$ contains no non-principal partitions which are hugged in~$\Pi_2$, and one cannot add a new non-principal partition to~$\Pi_2$ which would be hugged in~$\Pi_2$.
\end{definition}

\begin{remark}
    As remarked before, one could refine the notion of redundancy by refining the definition of `hugging'. 
\end{remark}

Thus the proposition can be stated as follows: a cube in~$\spine$ survives the retraction to~$\res$ if and only if it is not redundant. This is the second statement of \emph{Theorem \ref{thm:retraction process (D,(i))}}.

\begin{remark}
    Note that it is possible for single vertices in~$\spine$ to be redundant. For example, take~$\Pi_1 = \{\calP_1, \calP_2\}$ and~$\Pi_2 = \{\calP_1, \calP_2, \calQ\}$, where~$\calQ$ is hugged by~$\calP_1$ and~$\calP_2$. Then it may happen that at some stage, the (1-dimensional) cube~$c(\Pi_1, \Pi_2)$ (which is redundant since~$\Pi_2 \setminus \Pi_1 = \calQ$ is hugged in~$\Pi_2$) gets removed via its (0-dimensional) subface~$c(\Pi_1, \Pi_1)$ (which is redundant since~$\calQ$ can be added to~$\Pi_1$ to become hugged in~$\Pi_1 \cup \calQ = \Pi_2$). Hence the vertex sets of~$\spine$ and~$\res$ may not coincide.
\end{remark}

Observe (applying \emph{Theorem \ref{thm:vcd upper bound realisation theorem}}) that as the~$\uag$-action on~$\res$ is proper and cocompact, we know that the virtual cohomological dimension of~$\uag$ satisfies \[\dim(\res) \geq \vcduag \geq M(L).\] Hence:

\begin{corollary}\label{cor:there exist M(L) sets of principals and you can't add a non-principal to them}
    Suppose that~$\Gamma$ is spiky. Then there exists a compatible set~$\Pi$ with~$\vert \Pi \vert \geq M(L)$ which contains no hugged partitions, and to which one cannot add any non-principal partitions which would be hugged in~$\Pi$. 
\end{corollary}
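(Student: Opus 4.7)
The plan is to extract a suitable $\Pi$ from a top-dimensional cube of $\res$. By \emph{Theorem \ref{thm:retraction process (D,(i))}}, $\res$ is a contractible cube complex on which $\uag$ acts properly and cocompactly, so \emph{Theorem \ref{thm:vcd upper bound realisation theorem}} yields $\vcduag \leq \dim(\res)$. Combined with the lower bound $M(L) \leq \vcduag$ of \emph{Theorem \ref{thm:Millard-Vogtmann M(L) lower bound}}, this gives $\dim(\res) \geq M(L)$, so I would pick any cube $c(\Pi_1, \Pi_2; \alpha)$ in $\res$ with $\vert \Pi_2 \setminus \Pi_1 \vert \geq M(L)$.

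The naive choice $\Pi := \Pi_2$ has the right cardinality but could contain non-principal partitions from $\Pi_1$ which are hugged in $\Pi_2$. I would therefore define $J$ to be the set of non-principal partitions in $\Pi_1$ which are hugged in $\Pi_2$, and set $\Pi := \Pi_2 \setminus J$. Since $\Pi_2 \setminus \Pi_1 \subseteq \Pi$, the cardinality bound $\vert \Pi \vert \geq M(L)$ is immediate. To see that $\Pi$ contains no hugged partitions, note that by condition (i) of \emph{Proposition \ref{prop:characterisation of resultant complex}} applied to $c(\Pi_1, \Pi_2; \alpha)$, no non-principal in $\Pi_2 \setminus \Pi_1$ is hugged in $\Pi_2$; by construction of $J$, no non-principal in $\Pi_1 \setminus J$ is hugged in $\Pi_2$ either. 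Since $\Pi \subseteq \Pi_2$, any set of huggers inside $\Pi$ also lies inside $\Pi_2$, so nothing in $\Pi$ is hugged in $\Pi$.

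The main obstacle is the third property: showing that no non-principal partition $\calQ$ can be added to $\Pi$ so as to become hugged in $\Pi \cup \{\calQ\}$. Suppose for contradiction that such a $\calQ$ exists, with huggers $\{\calP_1, \calP_2\} \subseteq \Pi$ (or a single hugger in the 1-hugged case). These huggers are principal, hence lie in $\Pi_2$. To invoke condition (ii) of \emph{Proposition \ref{prop:characterisation of resultant complex}} for the required contradiction, I need $\calQ$ to be compatible with all of $\Pi_2 = \Pi \sqcup J$, not merely with $\Pi$. This is where spikiness enters: for each $\calQ' \in J$, the partition $\calQ'$ is hugged in $\Pi_2$ by principal partitions which, by principality, automatically lie in $\Pi = \Pi_2 \setminus J$; since $\calQ$ is compatible with those huggers by assumption, \emph{Lemma \ref{lem:condition 1 (NEW)}} (which applies as $\Gamma$ satisfies Condition 1) yields compatibility of $\calQ$ with $\calQ'$. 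Thus $\calQ$ is compatible with $\Pi_2$, and is hugged in $\Pi_2 \cup \{\calQ\}$ via the same huggers sitting in $\Pi \subseteq \Pi_2$, contradicting condition (ii).
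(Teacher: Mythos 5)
Your overall route is the same as the paper's: the paper derives this corollary in one line from $\dim(\res) \geq \vcduag \geq M(L)$ together with the characterisation of $\res$ in \emph{Proposition \ref{prop:characterisation of resultant complex}}, and your use of \emph{Lemma \ref{lem:condition 1 (NEW)}} to upgrade compatibility with $\Pi$ to compatibility with all of $\Pi_2$ is a correct and genuinely useful refinement of that one-line argument. However, your final step has a gap. The contradiction with condition (ii) of \emph{Proposition \ref{prop:characterisation of resultant complex}} only applies when the partition $\calQ$ being added is \emph{new to $\Pi_2$}; it says nothing when $\calQ$ is itself an element of $J$. And the elements of $J$ are precisely the problematic candidates: each $\calQ' \in J$ is non-principal, lies outside $\Pi = \Pi_2 \setminus J$, is compatible with all of $\Pi$ (since $\Pi_2$ is a compatible set), and its huggers are principal partitions of $\Pi_2$, all of which survive into $\Pi$ because $J$ consists only of non-principal partitions. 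So whenever $J \neq \emptyset$, every $\calQ' \in J$ can be added to $\Pi$ and would be hugged in $\Pi \cup \{\calQ'\}$, and the third property of the corollary fails for your choice of $\Pi$.

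To close the gap you would need to show that the cube can be chosen with $J = \emptyset$, i.e.\ that $\res$ contains a cube $c(\Pi_1, \Pi_2)$ of dimension at least $M(L)$ in which no non-principal partition of $\Pi_1$ is hugged in $\Pi_2$ (for instance, one with $\Pi_1 = \emptyset$). \emph{Proposition \ref{prop:characterisation of resultant complex}} does not supply this: it constrains only $\Pi_2 \setminus \Pi_1$ and additions to $\Pi_2$, and a cube can survive the retraction while carrying hugged partitions inside its bottom set $\Pi_1$, since the process only ever pushes along faces obtained by deleting hugged partitions from $\Pi_2 \setminus \Pi_1$. Without such an argument (which the paper's terse derivation also omits), the proposal does not establish the third property of the statement.
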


In certain cases, the resultant complex~$\res$ is of smaller dimension than~$\spine$. This provides a tighter upper bound on~$\vcduag$, realised geometrically by a proper, cocompact action of~$\uag$.

\section{Realising virtual cohomological dimension}\label{sec:realising vcd}

Millard--Vogtmann \cite{MillardVogtmannCubeComplexes21} prove that if~$\Gamma$ is `barbed' and~$M(V) > M(L)$, then every top-dimensional cube in~$\spine$ has a free face; this allows one to perform an equivariant retraction of~$\spine$ which reduces its dimension by one. In this section, we strengthen this result as follows: if~$\Gamma$ is `barbed' then our resultant complex~$\res$ always has dimension exactly~$M(L)$.

%For ease of comparison, we define both `barbed' and `quasi-barbed'.

We use the following definition from \cite{MillardVogtmannCubeComplexes21}.

\begin{definition}\label{def:barbed}
    We say that~$\Gamma$ is \emph{barbed} if all non-principal vertices~$u$ satisfy the following condition: for any~$v$ with~$d_\Gamma(u, v) = 2$, we have~$v >_\circ u$.
\end{definition}

Note that if~$\Gamma$ is barbed, then we cannot have non-principal vertices~$u, u'$ with~$d_\Gamma(u, u') = 2$: this would require both~$\link{u}^\pm \subsetneq \link{u'}^\pm$ and~$\link{u'}^\pm \subsetneq \link{u}^\pm$, which is impossible.

\begin{lemma}[\cite{MillardVogtmannCubeComplexes21}, \S7]\label{lem:consequences of barbed}
    Suppose~$\Gamma$ is barbed. Then:
    \begin{enumerate}[(i)]
        \item Every non-principal equivalence class is minimal and has only one element. Moreover, any~$\Gamma$-partition based at a non-principal vertex splits only that vertex.
        \item Suppose that~$\calQ$ is a non-principal partition based at~$u$. Then on each side of~$\calQ$, there is a principal vertex~$v$ with~$v \geq u$.
    \end{enumerate}
\end{lemma}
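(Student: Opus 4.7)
The plan is to extract both parts from the barbedness hypothesis, which forces the link of any non-principal vertex to embed strictly inside the link of any distance-$2$ neighbour.

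\textbf{For part (i),} fix a non-principal $u$ and a vertex $v$ with $u <_\circ v$. I would first show $[u] = \{u\}$ by supposing $u \sim w$ with $w \neq u$ and eliminating each possible value of $d_\Gamma(u, w)$. If $d_\Gamma(u,w) = 2$, barbedness on $u$ gives $w >_\circ u$, which contradicts the equality $\link{u} = \link{w}$ forced by $u \sim w$ at non-adjacent distance. If $d_\Gamma(u,w) = 1$, the equivalence of adjacent vertices forces $\link{u}\setminus\{w\} = \link{w}\setminus\{u\}$; then a witness $x \in \link{v}\setminus\link{u}$ produces a vertex at distance $2$ from $u$, to which barbedness applies and contradicts the symmetric form of the two links. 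Distance $>2$ collapses to $\link{u} = \link{w} = \emptyset$, which is inconsistent with the existence of $v$. Minimality of $[u]$ in the partial order then follows from the same case analysis applied to a hypothetical $w$ with $w \leq u$ and $u \not\leq w$. The final sentence of (i) is immediate: any $v \in \splt{\calQ}$ satisfies $v \leq u$ because $u$ lies in $\mx{\calQ}$, and the singleton-plus-minimality of $[u]$ just established forces $v = u$.

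\textbf{For part (ii),} fix a side $S$ of $\calQ$, which contains (say) $u$ and, by thickness, at least one additional element of $V^\pm$. Part (i) precludes splitting of any vertex other than $u$, so the sides of $\calQ$ are determined by distributing the components of $\Gamma \setminus \str{u}$ between them. Thus some entire component $C \in \calC(u)$ lies in $S$ (together with its inverses). Since $\Gamma$ is connected, every such $C$ contains a vertex $c$ at distance exactly $2$ from $u$, and barbedness yields $c >_\circ u$. If $c$ is principal, we are done. Otherwise I would iteratively replace $c$ by a dominating vertex along a $\leq_\circ$-chain until reaching a principal vertex $v$; transitivity of $\leq_\circ$ then secures $v \geq u$. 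The key sub-step is showing that $v$ still lies in $C$: a dominator of a vertex in $C$ shares neighbours with it and hence sits in $C$, provided it is not adjacent to $u$, and the latter case is ruled out by using barbedness on $u$ to compare $\link{u}$ with the link of the candidate dominator.

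\textbf{Main obstacle.} The hardest piece is the adjacent-twin case in (i), where the existence of the dominator $v$ must be leveraged to generate a distance-$2$ witness to which barbedness then applies. In (ii), the delicate issue is keeping the principal dominator inside $C$ rather than letting it escape into $\link{u}$, which requires an extra link comparison; once that is pinned down, the argument goes through cleanly from the partition axioms.
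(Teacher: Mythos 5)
First, note that the paper does not prove this lemma at all: it is imported verbatim from Millard--Vogtmann (\cite{MillardVogtmann2019}, \S7), so there is no in-paper argument to compare against. Judged on its own terms, your overall strategy is sound --- the case split on $d_\Gamma(u,w)$ for part (i), and the ``pick a whole component in the side, find a distance-$2$ vertex in it, apply barbedness, then climb a $<_\circ$-chain to a principal vertex staying inside the component'' argument for part (ii) all go through. But two steps in part (i) do not work as written.

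The first is the distance-$>2$ case: you claim $\link{u}=\link{w}=\emptyset$ is ``inconsistent with the existence of $v$'', but $\emptyset \subsetneq \link{v}$ is perfectly consistent --- two isolated vertices in a graph with at least one edge give a vacuously barbed graph in which both isolated vertices are non-principal and mutually equivalent. What actually kills this case is connectedness of $\Gamma$ (which you invoke only in part (ii)); without it the lemma is literally false, so you should make that standing assumption explicit and use it here. The second, more substantive, issue is the adjacent-twin case, which you yourself flag as the main obstacle: your proposed mechanism (produce a witness $x \in \link{v}\setminus\link{u}$ at distance $2$ from $u$ and apply barbedness) does not close --- such an $x$ need not be at distance $2$ from $u$, and even when it is, $x >_\circ u$ does not visibly contradict $\str{u}=\str{w}$. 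The case is in fact easy and needs no barbedness: from $u\sim w$ adjacent one gets $\str{u}=\str{w}$, and then $w \in \link{u} \subsetneq \link{v}$ forces $v \in \link{w} \subseteq \str{w} = \str{u}$, contradicting the fact that a dominator of $u$ is never in $\str{u}$ (if $v\in\link{u}\subseteq\link{v}$ then $v$ is adjacent to itself). The same observation ($m \in \link{w} \subseteq \str{u}$ for any dominator $m$ of $u$) is what makes the adjacent sub-case of your minimality argument work, so it is worth isolating. With those two repairs, and the (correct) standard fact that a partition based at $u$ can only split vertices $v$ with $v \leq u$, the rest of your outline, including all of part (ii), is fine.
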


\begin{definition}\label{def:oversize}
    Let~$\Pi$ be a set of pairwise compatible~$\Gamma$-Whitehead partitions. We say that~$\Pi$ is \emph{oversize} if~$\vert \Pi \vert > M(L)$.
\end{definition}

Note that an oversize set necessarily contains a non-principal partition. 

\begin{lemma}\label{lem:barbed implies oversize contains hugged}
    Let~$\Gamma$ be barbed. Then any oversize set of compatible~$\Gamma$-Whitehead partitions contains a hugged partition.
\end{lemma}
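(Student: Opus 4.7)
My plan is to argue by contradiction: assume that no partition in $\Pi$ is hugged, and produce a compatible set of principal partitions of cardinality exceeding $M(L)$, contradicting the definition of $M(L)$.

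Since $|\Pi| > M(L)$ and the principal partitions in $\Pi$ themselves form a compatible subset of size at most $M(L)$, $\Pi$ must contain at least one non-principal partition $\calQ$, based at some non-principal vertex $u$. Applying \emph{Lemma \ref{lem:consequences of barbed}}(ii), each side of $\calQ$ contains a principal vertex $v \geq u$; since principality is a property of equivalence classes and $u$ is non-principal, such a $v$ is not equivalent to $u$, and (being on a side and thus not in $\link{\calQ} = \link{u}^\pm$) is not adjacent to $u$. These two facts together force $\link{u} \subsetneq \link{v}$ and hence $v >_\circ u$. Pick such principal vertices $m \in Q$ and $n \in \bar Q$.

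The next step is to associate to each non-principal $\calQ \in \Pi$ a principal partition $\calP_\calQ$ which, if added to $\Pi$, would hug $\calQ$. The natural candidate is the $1$-hugging partition based at $m$ with side $\{m\} \cup \calC^Q(m)$; if this fails the thickness requirement, one instead uses a $2$-hugging pair based at $m$. By construction $\calP_\calQ$ is principal, compatible with $\calQ$, and (by hypothesis that $\calQ$ is not hugged in $\Pi$) lies outside $\Pi$. The heart of the proof is verifying that $\calP_\calQ$ is compatible with every other $\calR \in \Pi$: for $\calR$ based at a vertex $w$ with $d_\Gamma(u,w) \notin \{0,2\}$, compatibility of $\calR$ with $\calQ$ combined with \emph{Lemma \ref{lem:MV19, 5.1}} gives compatibility with $\calP_\calQ$ after comparing sides at $m$ and at $u$; when $d_\Gamma(u,w) = 2$, barbedness forbids $w$ from being non-principal and one then argues side-by-side using \emph{Lemma \ref{lem:distance >= 2 implies containment of sides}} in the spirit of the proof of \emph{Lemma \ref{lem:condition 2 (NEW)}}.

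Finally, iterate over all non-principal partitions in $\Pi$, replacing each $\calQ$ in turn by its associated $\calP_\calQ$ and checking that compatibility is preserved at every stage (including with principals introduced at earlier stages, via essentially the same local argument since each new principal strictly dominates a non-principal base). This yields a compatible set consisting entirely of principal partitions and of cardinality $|\Pi| > M(L)$, the desired contradiction. The main obstacle is the compatibility verification in the central step above; auxiliary technical care is needed to ensure $\calP_\calQ$ is thick (handled by switching between $1$- and $2$-hugging) and to accommodate the case that several non-principals in $\Pi$ share a common dominating principal.
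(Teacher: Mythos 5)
Your overall architecture matches the paper's: repeatedly trade non‑principal partitions for principal ones until an oversize all‑principal compatible set gives a contradiction with the definition of $M(L)$. However, there is a genuine gap at exactly the point you flag as "the heart of the proof". The claim that the natural candidate $\calP_\calQ$ (the hugging partition based at $m$ extracted from the side $Q$) is compatible with every other $\calR\in\Pi$ is simply false in general. Partitions in $\Pi$ based at $m$ itself can be incompatible with the first candidate; fixing this by flipping which of $m,m^{-1}$ goes on the small side can still fail against a second $m$-based partition; and after passing to a $2$-hugging pair one can still meet an incompatible partition based at a different principal vertex $n>_\circ u$ inside $Q$, forcing the whole construction to restart at $n$. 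The paper's proof is essentially an algorithm cycling through these three successive candidate choices, with a maximality condition on the side $R$ and finiteness of $Q$ to guarantee termination, and crucially the process may terminate not with a replacement but with the discovery that $\calQ$ is already hugged \emph{by partitions of $\Pi$}. That dichotomy ("hugged in $\Pi$, or replaceable") is the content of the lemma, not a routine side‑comparison, so "argues side‑by‑side in the spirit of Lemma \ref{lem:condition 2 (NEW)}" does not discharge it.

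A second, independent gap concerns your iteration. Under your contradiction hypothesis ("no partition of $\Pi$ is hugged in $\Pi$"), suppose at some later stage the current non‑principal partition turns out to be hugged in the \emph{modified} set $\widehat{\Pi}$ — hugged by principal partitions that were introduced as replacements and do not belong to $\Pi$. This does not contradict your hypothesis, so your induction stalls. The paper handles this with a separate Step 2, showing that any partition hugged in $\widehat{\Pi}$ was already hugged in $\Pi$; that argument depends on having chosen, at each stage, a non‑principal partition one of whose sides is innermost (contains no side of any other non‑principal partition), a normalisation your proposal omits. Without both the candidate‑modification loop and the innermost choice plus the transfer‑back step, the proof does not close.
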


\begin{proof}
    Let~$\Pi$ be an oversize set of compatible~$\Gamma$-partitions. Then~$\Pi$ contains at least one non-principal partition; let~$\calQ$ be a non-principal partition such that one of its sides (say,~$Q$) contains no sides of any other non-principal partitions. 

    \textbf{Step 1:} Either prove that~$\calQ$ is hugged or move to the next iteration, considering a different non-principal partition.
    
    \textbf{Aim:} prove that either~$\calQ$ is hugged in~$\Pi$, or if not, that we can replace~$\calQ$ with some principal partition~$\calP$, thus obtaining a pairwise compatible set~$\Pi' = \left( \Pi \setminus \calQ \right) \cup \calP$ of the same size but with one more principal partition.

    If we succeed in this \emph{Aim}, then we repeat the process. If at each stage the non-principal partition we consider is not hugged, then eventually we will end up with an oversize compatible set consisting only of principal partitions, which is a contradiction. This means that at some point, we have a compatible set~$\widehat{\Pi}$, obtained from~$\Pi$ by step-by-step replacements, which contains a non-principal partition which is hugged in~$\widehat{\Pi}$. We then prove that this non-principal partition must in fact have been hugged in~$\Pi$, which finishes the argument.

    By \textit{Lemma \ref{lem:consequences of barbed} (ii)},~$Q$ contains a principal vertex~$m$ with~$m \geq u$. Since~$d_\Gamma(m, u) > 1$ and~$m \geq u$, we have~$d_\Gamma(m, u) = 2$, and hence as~$\Gamma$ is barbed, in fact~$m >_\circ u$. 

    We start with some candidate principal partition~$\calP$ with which we try to replace~$\calQ$ in~$\Pi$. We will constantly edit this choice of~$\calP$ until we find a principal partition that is compatible with~$\Pi \setminus \calQ$ and so can replace~$\calQ$ (or else conclude that~$\calQ$ is already hugged in~$\Pi$). 
    
    \textbf{First choice of~$\calP$.} As a first pass, define ~$\calP$ by the side~$P \coloneqq Q \setminus \left(\{m^{-1}\} \cup (\link{m} \cap Q)\right)$; this is a partition based at~$m$. If this is in~$\Pi$, then it hugs~$\calQ$ in~$\Pi$, and we're done. So suppose~$\calP \notin \Pi$. If~$\calP$ is compatible with all of~$\Pi \setminus \calQ$, then we may replace~$\calQ$ with~$\calP$, obtaining a set~$\Pi'$ with one more principal partition, in which case we have achieved our aim. 

    So suppose that there is some~$\calR \in \Pi$ which is not compatible with~$\calP$.

    Suppose first that~$\calR$ is based at~$m$. Since~$\calR$ is compatible with~$\calQ$, and since~$Q$ contains both~$m$ and~$m^{-1}$, we know that~$\calQ$ must contain one side~$R$ of~$\calR$. Now, since~$\calR$ is incompatible with~$\calP$, we know that~$R \not\subseteq P$, so~$m^{-1} \in R$. 
    
    \textbf{Second choice of~$\calP$.} Now edit our choice of~$\calP$ to have the side~$P \coloneqq Q \setminus \left(\{m\} \cup (\link{m} \cap Q)\right)$. Note that this ensures that~$P \supseteq R$, so this choice of~$\calP$ is compatible with~$\calR$. Once again, we're done if~$\calP$ is compatible with all of~$\Pi \setminus \calQ$ (which includes the case~$\calP \in \Pi$), so assume this is not the case. Then there is some~$\calR' \in \Pi$ which is not compatible with~$\calP$.

    Suppose first that~$\calR'$ is based at~$m$. As before, since~$\calR'$ is compatible with~$\calQ$, and since~$Q$ contains both~$m$ and~$m^{-1}$, we know that~$\calQ$ must contain one side~$R'$ of~$\calR'$. If~$m^{-1} \in R'$, then necessarily~$R' \subseteq P$, which contradicts our assumption that~$\calR'$ is incompatible with~$\calP$. So~$m \in R'$. Hence we have:
    \begin{itemize}
        \item~$m \in \overline{R} \cap R'$; and
        \item~$m^{-1} \in R \cap \overline{R'}$; and
        \item~$u^{-1} \in \overline{R} \cap \overline{R'}$,
    \end{itemize}
    so since~$\calR$ and~$\calR'$ are compatible, we must have~$R \cap R' = \emptyset$.

    \textbf{Third choice of~$\calP$.} We may assume that~$R$ is maximal among sides of partitions based at~$m$ such that~$m^{-1} \in R \subseteq Q$. We now edit our choice of~$\calP$ once again so that~$\calQ$ is 2-hugged by~$\calR$ and~$\calP$, with the sides~$R$ and~$P$ respectively. Since~$R' \cap R = \emptyset$ and~$R \subseteq Q$, we must have~$R' \subseteq P$, so~$\calP$ is compatible with every partition which is based at~$m$. 

    Now suppose that there is some~$\calR'' \in \Pi$ with which~$\calP$ is incompatible. Let~$\calR''$ be based at~$n$; by what we just said, we must have~$n \neq m$. Since~$\calP$ and~$\calR''$ are incompatible, we must have~$d_\Gamma(m, n) = 2$. Since~$m >_\circ u$, we must also have~$d_\Gamma(n, u) \geq 2$ (as otherwise~$[n, u] = 1$ which implies that~$[m, n] = 1$). 
    
    Since~$\calR''$ is compatible with~$\calQ$, we know that~$Q$ either contains or is contained in a side of~$\calR''$. If the latter, then~$P \subseteq Q$ is also contained in a side of~$\calR''$, which contradicts incompatibility of~$\calR''$ with~$\calP$. Hence~$Q$ contains some side~$R''$ of~$\calR''$.

    We know that~$\calR''$ and~$\calR$ are compatible (but not adjacent, since~$d_\Gamma(n, m) = 2$), so~$R$ either contains or is contained in a side of~$\calR''$. If~$R$ contains a side of~$\calR''$, then that side of~$\calR''$ has empty intersection with~$P$, contradicting the incompatibility of~$\calR''$ and~$\calP$. So~$R$ is contained in a side of~$\calR''$. If~$R \subseteq \overline{R''}$, then~$R'' \subseteq \overline{R}$, which means that~$R'' \subseteq \overline{R} \cap Q$. Now, if~$R'' \cap \overline{P} \neq \emptyset$, this means that~$(\overline{P} \cap \overline{R}) \cap Q \neq \emptyset$. This is a contradiction, since~$\{\calP, \calR\}$ 2-hugs~$\calQ$, so~$\overline{P} \cap \overline{R} = \overline{Q}$. We are forced to conclude that~$R \subseteq R''$.

    Suppose that~$d_\Gamma(n, u) \geq 3$. Then, since~$d_\Gamma(n, m) = 2$, there is a single component~$C \in \calC(n)$ which contains all of~$\{u, u^{-1}, m, m^{-1}\}$. Since the base of~$R$ is~$m$, and since~$R \subseteq R''$, we must therefore have~$C \in R''$. This means that~$\{u, u^{-1}\} \subseteq R'' \subseteq Q$, which is a contradiction. 

    Hence, we have~$d_\Gamma(n, u) = 2$. Since~$\Gamma$ is barbed, we therefore conclude both that~$n$ is principal and that~$n >_\circ u$. Thus we have found a new principal vertex, distinct from~$m$, which dominates~$u$ and lies inside~$Q$. We can now go back to our \emph{First choice}, working with~$n$ instead of~$m$. We will either find that~$\calQ$ is hugged by partitions based at~$n$, or that we can find a partition based at~$n$ which can replace~$\calQ$ in~$\Pi$, or that the entire process will repeat, yielding a third principal vertex~$n' >_\circ u$ with~$n' \in Q$. Note that we cannot have~$n' = m$, as then we will find a partition~$\widetilde{\calR}$, based at~$m$, with one side~$\widetilde{R} \subseteq Q$ such that~$\widetilde{R} \supseteq R''$ (this last using the same argument that gave~$R \subseteq R''$ above). But then~$\widetilde{R} \supseteq R$, which contradicts our choice of~$R$ as maximal among sides of partitions based at~$m$ such that~$m^{-1} \in R \subseteq Q$. 

    This process must terminate at some point, since~$Q$ contains only finitely many vertices. If it terminates by finding that~$\calQ$ is hugged, then we're done, so suppose that is not the case. This means that it terminated because we found some principal partition~$\calP$ which could replace~$\calQ$, to form~$\Pi' \coloneqq \left( \Pi \setminus \calQ\right) \cup \calP$.

    We now go back to the start of \emph{Step 1} with this new~$\Pi'$: pick a non-principal partition with one side containing no sides of any other non-principal partitions, show that it is hugged in~$\Pi'$, or replace it too by a principal partition. This process must terminate, as otherwise we will successfully replace all the non-principal partitions in~$\Pi$ by principal partitions, resulting in an oversize set of pairwise compatible principal partitions -- a contradiction of the definition of `oversized'.

    Therefore, at some point, we have a set~$\widehat{\Pi}$, obtained from~$\Pi$ by step-by-step replacements as above, with a non-principal partition~$\widehat{\calQ} \in \widehat{\Pi}$ which is hugged in~$\widehat{\Pi}$. It only remains to prove that~$\widehat{\calQ}$ was in fact hugged in~$\Pi$. 

    \textbf{Step 2:} Conclude that we can find a partition which was hugged in~$\Pi$.

    Let~$\widehat{\calQ} \in \widehat{\Pi}$ be as above; denote the base of~$\widehat{\calQ}$ by~$\widehat{u}$.

    Note that if at any point we find that~$\widehat{\calQ}$ is hugged by principal partitions which were part of our original set~$\Pi$, then~$\widehat{\calQ}$ was hugged in~$\Pi$, and we're done. 
    
    \begin{claim*}
        ~$\widehat{\calQ}$ is 2-hugged in~$\widehat{\Pi}$. 
    \end{claim*}

    \begin{proof}[Proof of Claim.]
        Suppose that~$\widehat{\calQ}$ is 1-hugged in~$\widehat{\Pi}$ by some principal partition~$\calP$; let~$\widehat{Q}$ be the side of~$\widehat{\calQ}$ which is hugged. If~$\calP \in \Pi$, we're done; if not, then~$\calP$ was added in at some point, in replacement of some non-principal partition~$\calQ$ (which is based at, say,~$u$). Let~$\calP$ be based at some principal vertex~$m$; it had a side~$P \subseteq Q$. We have~$m >_\circ u$ and~$m >_\circ \widehat{u}$, so we cannot have~$d_\Gamma(u, \widehat{u}) = 1$. Since~$\calQ$ was chosen so that~$Q$ was innermost among non-principal sides, we cannot have~$\widehat{Q}$ hugged by the side~$P$ -- this would mean that~$\widehat{Q} \subseteq Q$. Hence~$\widehat{Q}$ must be hugged by the other side~$\overline{P}$ of~$\calP$. Therefore~$\widehat{Q} \supseteq \{m, m^{-1}\}$, which (since~$d_\Gamma(u, \widehat{u}) \neq 1$) means that~$\widehat{Q} \supseteq Q$. But now~$P \cup \overline{P} \subseteq \widehat{Q}$, which implies that~$\widehat{\calQ}$ is not a valid partition: one of its sides is a singleton. We conclude that~$\widehat{\calQ}$ is not 1-hugged in~$\widehat{\Pi}$. 
    \end{proof}
    
    We can now suppose that~$\widehat{\calQ}$ is 2-hugged in~$\widehat{\Pi}$ by~$\{\calP, \calP'\}$.
    
    If~$\{\calP, \calP'\} \subseteq \Pi$, we're done, so suppose not. Without loss of generality, suppose that~$\calP$ was added to our compatible set later in the process than~$\calP'$ was: in particular, the non-principal partition~$\calQ$ which it replaced was compatible with~$\calP'$. Let~$\calQ$ be based at~$u$, and let~$\calP$ be based at~$m$. This means that~$\calP'$ is also based at~$m$; as above, take~$\widehat{u}$ to be the base of~$\widehat{\calQ}$. Let~$Q$ be the side of~$\calQ$ which contains~$\{m, m^{-1}\}$, and let~$P$ be the side of~$\calP$ which is contained in~$Q$. Let~$\widehat{Q}$ be the side of~$\widehat{\calQ}$ which is 2-hugged; since~$\{m, m^{-1}\} \subseteq \widehat{Q}$, we have that~$P$, along with some side~$P'$ of~$\calP'$, are the sides that 2-hug~$\widehat{Q}$.

    Since~$d_\Gamma(u, u') \neq 1$, we know that~$Q$ either contains, or is contained in, one side of~$\widehat{\calQ}$. We chose~$Q$ to be innermost among non-principal sides of a set of partitions that contained~$\widehat{\calQ}$, so we must be in the latter case. Since~$m \in Q \cap \widehat{Q}$, we therefore conclude that~$Q \subseteq \widehat{Q}$.

    By compatibility of~$\calQ$ with~$\calP'$, and since~$Q$ contains~$\{m, m^{-1}\}$, we know that~$Q$ must contain a side of~$\calP'$. If~$P' \subseteq Q$, then~$\smash{\overline{\widehat{Q}} = \overline{P} \cap \overline{P'} \supseteq \overline{Q}}$, which implies that~$\widehat{Q} \subseteq Q$. Hence (again as~$Q$ was chosen to be innermost among non-principal sides of a set of partitions that contained~$\widehat{\calQ}$) we have~$Q = Q'$, a contradiction (as~$\widehat{\calQ} \in \widehat{\Pi}$, a set that does not contain~$\calQ$). 

    On the other hand, if~$\overline{P'} \subseteq Q$, then~$P' \cup \overline{P'} \subseteq \widehat{Q}$, which means that~$\widehat{\calQ}$ is not a valid partition (as one of its sides is a singleton). 

    In both cases, we derive a contradiction. We therefore conclude that~$\calP$ cannot have been added in as a replacement of some non-principal partition~$\calQ$. In other words,~$\calP \in \Pi$. Since we assumed that~$\calP$ was added later in the process than~$\calP'$ was, we conclude that~$\calP' \in \Pi$ too. Hence,~$\widehat{\calQ}$ was in fact 2-hugged in~$\Pi$, and we have found a hugged partition in~$\Pi$. This completes the proof.
\end{proof}

This proves \emph{Theorem \ref{customthm:main work}, (ii)}, which we restate in slightly different language here. Recall (\emph{Definition \ref{def:redundant cube}}) that a cube~$c(\Pi_1, \Pi_2)$ in~$\spine$ is \emph{redundant} if~$\Pi_2 \setminus \Pi_1$ contains no non-principal partitions which are hugged in~$\Pi_2$, and one cannot add a new non-principal partition to~$\Pi_2$ which would be hugged in~$\Pi_2$.

\begin{theorem}\label{thm:barbed implies cubes of dimension higher than principal rank are redundant}
    If~$\Gamma$ is barbed, then every cube in~$\spine$ of dimension greater than the principal rank~$M(L)$ is redundant.
\end{theorem}

We can now deduce \emph{Theorem \ref{customthm:main thm}}:

\begin{corollary}\label{cor:conditions 1 & 2 + barbed implies vcd = M(L)}
    Let~$\Gamma$ be spiky and barbed. Then~$\vcduag = M(L)$, and there exists a~$\uag$-complex realising this virtual cohomological dimension.
\end{corollary}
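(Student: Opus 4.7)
The plan is to assemble the pieces that have already been developed. The hypothesis that $\Gamma$ is spiky lets us invoke \emph{Theorem \ref{thm:retraction process (D,(i))}}, producing a $\uag$-equivariant deformation retraction $\spine \to \res$; in particular $\res$ inherits a proper and cocompact $\uag$-action and is contractible. The hypothesis that $\Gamma$ is barbed lets us invoke \emph{Lemma \ref{lem:barbed implies oversize contains hugged}}, which asserts that every compatible set of $\Gamma$-Whitehead partitions of size greater than $M(L)$ contains a hugged partition.

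First I would argue that $\dim(\res) \leq M(L)$. Suppose for contradiction that $\res$ contained a cube $c(\Pi_1,\Pi_2;\alpha)$ of dimension strictly greater than $M(L)$, so that $|\Pi_2 \setminus \Pi_1| > M(L)$. If $|\Pi_2| > M(L)$ then $\Pi_2$ is oversize, so by \emph{Lemma \ref{lem:barbed implies oversize contains hugged}} it contains a hugged partition $\calQ$. One then checks that such $\calQ$ must in fact lie in $\Pi_2 \setminus \Pi_1$ (otherwise $\Pi_1$ itself would contain a hugged partition, and one can reduce to $\Pi_1$ by noting that $\Pi_1 \subseteq \Pi_2$ implies that hugging is preserved under taking supersets when the hugging partitions remain present). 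This contradicts \emph{Proposition \ref{prop:characterisation of resultant complex}}, which characterises cubes of $\res$ as precisely the non-redundant cubes. In the remaining case, $|\Pi_1| \geq M(L)$ but $|\Pi_2 \setminus \Pi_1| > M(L)$, forcing $|\Pi_2| > 2M(L) > M(L)$, which is handled identically. Hence every cube of $\res$ has dimension at most $M(L)$.

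Combining this with \emph{Theorem \ref{thm:vcd upper bound realisation theorem}}, since $\uag$ acts properly and cocompactly on the contractible complex $\res$, we obtain
\[
\vcduag \;\leq\; \dim(\res) \;\leq\; M(L).
\]
On the other hand, \emph{Theorem \ref{thm:Millard-Vogtmann M(L) lower bound}} of Millard--Vogtmann gives the reverse inequality $M(L) \leq \vcduag$. Thus all three quantities coincide, yielding $\vcduag = M(L) = \dim(\res)$, and $\res$ is the desired $\uag$-complex realising the virtual cohomological dimension.

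The main obstacle has already been dispatched in the preceding lemma: the delicate point is \emph{Lemma \ref{lem:barbed implies oversize contains hugged}}, which converts the global combinatorial condition ``oversize'' into the local structural condition ``contains a hugged partition''. Given that lemma and the retraction framework, the corollary itself is a short assembly; the only subtlety is verifying that a hugged partition in $\Pi_2$ can be taken to lie in $\Pi_2 \setminus \Pi_1$, which follows from the hereditary nature of hugging with respect to supersets of compatible sets.
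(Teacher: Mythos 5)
Your overall architecture is exactly the paper's: spikiness gives the equivariant retraction $\spine \to \res$ (so $\res$ is contractible with a proper cocompact $\uag$-action), barbedness gives \emph{Lemma \ref{lem:barbed implies oversize contains hugged}}, the characterisation in \emph{Proposition \ref{prop:characterisation of resultant complex}} shows every cube of dimension $> M(L)$ is removed, and then \emph{Theorem \ref{thm:vcd upper bound realisation theorem}} together with \emph{Theorem \ref{thm:Millard-Vogtmann M(L) lower bound}} sandwiches $\vcduag$ at $M(L)$. That assembly is correct.

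There is, however, a genuine gap in the middle step. You apply \emph{Lemma \ref{lem:barbed implies oversize contains hugged}} to $\Pi_2$ and then assert that the hugged partition $\calQ$ ``must in fact lie in $\Pi_2 \setminus \Pi_1$''. Your justification --- that hugging is preserved under passing to supersets --- does not deliver this: preservation under supersets only says that a partition hugged in a subset remains hugged in the ambient set, it gives no mechanism for relocating $\calQ$ out of $\Pi_1$. And the relocation matters: \emph{Proposition \ref{prop:characterisation of resultant complex}} removes a cube $c(\Pi_1,\Pi_2;\alpha)$ only when a hugged partition lies in $\Pi_2 \setminus \Pi_1$ (or can be added); a cube whose only hugged partitions sit inside $\Pi_1$ is \emph{not} redundant and survives to $\res$, so finding a hugged $\calQ \in \Pi_1$ yields no contradiction. (Your case split on $|\Pi_1| \geq M(L)$ is also vacuous, since $|\Pi_2| \geq |\Pi_2\setminus\Pi_1| > M(L)$ always.) The repair is short and is what the paper does: the cube's dimension is $|\Pi_2 \setminus \Pi_1|$, so the set $\Pi_2 \setminus \Pi_1$ is \emph{itself} an oversize compatible set; apply \emph{Lemma \ref{lem:barbed implies oversize contains hugged}} to it directly to obtain a non-principal partition in $\Pi_2 \setminus \Pi_1$ hugged in $\Pi_2 \setminus \Pi_1$, hence hugged in $\Pi_2$ (here the superset observation is used correctly), and the characterisation of $\res$ applies. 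With that one substitution your argument matches the paper's proof.
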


\begin{remark}
   ~$\Gamma$ being barbed precludes existence of non-principal vertices~$u, u'$ with~$d_\Gamma(u, u') = 2$, so if~$\Gamma$ is barbed then it automatically (vacuously) satisfies Condition 1. So the condition `spiky and barbed' can be replaced with `barbed and satisfies Condition 2'.
\end{remark}

\begin{proof}
    Since~$\Gamma$ is spiky, we may apply the retraction process to~$\spine$, obtaining the new~$\uag$-complex~$\res$. 
    
    A cube~$c(\Pi_1, \Pi_2; \alpha)$ in~$\spine$ will have dimension strictly greater than~$M(L)$ if and only if~$\Pi_2 \setminus \Pi_1$ is an oversize set. By \textit{Lemma \ref{lem:barbed implies oversize contains hugged}}, and since~$\Gamma$ is barbed, we know that~$\Pi_2 \setminus \Pi_1$ contains a non-principal partition which is hugged in~$\Pi_2 \setminus \Pi_1$ -- so in particular, is hugged in~$\Pi_2$. Now by the characterisation of~$\res$ in \textit{Proposition \ref{prop:characterisation of resultant complex}}, all such cubes were removed during the retraction process and so are not present in~$\res$. 

    We have therefore shown that~$\res$ contains no cubes of dimension greater than~$M(L)$; in other words,~$\dim(\res) \leq M(L)$. Since we have a proper cocompact action of~$\uag$ on~$\res$ (which is contractible, as it is a retract of~$\spine$), we have~$\vcduag \leq M(L)$. Combining this with lower bound~$M(L) \leq \vcduag$ given by \textit{Theorem \ref{thm:Millard-Vogtmann M(L) lower bound}} yields the claimed equality.
\end{proof}

\section{Application: Arbitrarily large gaps between~$\vcduag$ and~$\dim(\spine)$}\label{sec:arbitrarily large gaps}

In this section we present a certain prototypical family of examples, which we call \emph{rake} graphs. We explicitly calculate~$M(L)$ and~$M(V)$ for these graphs; we then apply \emph{Corollary \ref{cor:conditions 1 & 2 + barbed implies vcd = M(L)}} to show that~$\vcduag = M(L)$ and hence that the gap between~$\vcduag$ and~$\dim(\spine)$ may be arbitrarily large. 

\subsection{Rake graphs}\label{subsec:rakes}

\begin{definition}
    For~$d \geq 1$, the \emph{$d$-rake}~$T_d$ is the simplicial graph with one vertex~$v$ of valence~$d+1$, to which is attached a leaf~$u$ as well as~$d$ other degree-2 vertices,~$a_1, \dots, a_d$. Each~$a_i$ is adjacent to a leaf~$b_i$. 
\end{definition}

\begin{figure}[H]
    \begin{center}
        \begin{tikzpicture}
            \fill (-1, 0) circle (2pt);
            \fill (0, 0) circle (2pt);
            \fill (0.9, 1) circle (2pt);
            \fill (1.9, 1) circle (2pt);
            \fill (0.9, 0.6) circle (2pt);
            \fill (1.9, 0.6) circle (2pt);
            \fill (0.9, -1) circle (2pt);
            \fill (1.9, -1) circle (2pt);
            \draw[black, thick] (-1, 0) -- (0, 0);
            \draw[black, thick] (0, 0) -- (0.9, 1);
            \draw[black, thick] (0, 0) -- (0.9, -1);
            \draw[black, thick] (0.9, 1) -- (1.9, 1);
            \draw[black, thick] (0.9, -1) -- (1.9, -1);
            \draw[black, thick] (0, 0) -- (0.9, 0.6);
            \draw[black, thick] (0.9, 0.6) -- (1.9, 0.6);
            \node at (-1, 0) [above = 2pt, black, thick]{$u$};
            \node at (0, 0) [above = 2pt, black, thick]{$v$};
            \node at (0.9, 1) [above = 2pt, black, thick]{$a_1$};
            \node at (0.9, 0.6) [below = 2pt, black, thick]{$a_2$};
            \node at (0.9, -1) [below = 2pt, black, thick]{$a_d$};
            \node at (1.9, 1) [above = 2pt, black, thick]{$b_1$};
            \node at (1.9, 0.6) [below = 0pt, black, thick]{$b_2$};
            \node at (1.9, -1) [below = 0pt, black, thick]{$b_d$};
            \fill (1.4, -0.1) circle (0.5pt);
            \fill (1.4, -0.3) circle (0.5pt);
            \fill (1.4, -0.5) circle (0.5pt);
        \end{tikzpicture}
        \caption{The~$d$-rake~$T_d$.}
        \label{fig:the d-rake}
    \end{center}
\end{figure}
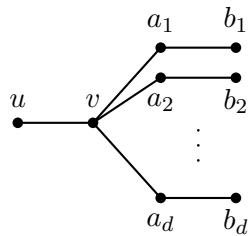

In our calculations of~$M(L)$ and~$M(V)$ below, we will rely on \textit{Lemma \ref{lem:MV19, 5.1}}, which we reproduce here for readability.

\begin{lemma}[\cite{MillardVogtmannCubeComplexes21}, \textit{Lemma 5.1}]\label{lem:M(u)+M(v)=M(u, v)}
    If non-equivalent vertices~$u, v \in V$ have~$d_\Gamma(u, v) \neq 2$, then any partition based at~$u$ is compatible with any partition based at~$v$. In particular, 
    \[M(u, v) = M(u) + M(v).\]
\end{lemma}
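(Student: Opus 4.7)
The plan is to compute $M(L)$ and $M(V)$ for the $d$-rake $T_d$ by decomposing each quantity into a $v$-contribution and a ``spike'' contribution via \emph{Lemma \ref{lem:M(u)+M(v)=M(u, v)}}, then pinning down each piece. First, I would identify the relevant vertices: checking thickness, any would-be partition based at $b_i$ has a singleton side, so $b_i$ is not relevant, while $u, v, a_1, \ldots, a_d$ all are. From the links $\link{u} = \{v\}$, $\link{v} = \{u, a_1, \ldots, a_d\}$, $\link{a_i} = \{v, b_i\}$, $\link{b_i} = \{a_i\}$, one reads $a_i >_\circ u$ and $v >_\circ b_i$, so $L = \{v, a_1, \ldots, a_d\}$. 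Since $d_{T_d}(v, x) = 1$ for every $x \in \{u, a_1, \ldots, a_d\}$, the lemma gives
\[
M(V) = M(v) + M(\{u, a_1, \ldots, a_d\}), \qquad M(L) = M(v) + M(\{a_1, \ldots, a_d\}).
\]

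Next I would handle $M(v)$. Each of $b_i, b_i^{-1}$ forms its own component of $\Gamma \setminus \str{v}$, so a $v$-partition is determined by a nonempty proper subset $S \subsetneq \{b_i^{\pm 1}\}_{i = 1}^d$ specifying which of these elements lies on the $v$-side. Two $v$-partitions share the same link and cannot be adjacent, so compatibility reduces to chain containment of sides; the longest chain of nonempty proper subsets of a $2d$-element set has length $2d - 1$, yielding $M(v) = 2d - 1$. With this, the proposition reduces to showing
\[
M(\{a_1, \ldots, a_d\}) = d \quad \text{and} \quad M(\{u, a_1, \ldots, a_d\}) = 2d - 1.
\]

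For the lower bounds I propose an explicit nested construction. Writing $B_k \coloneqq \{a_k^{\pm 1}, b_k^{\pm 1}\}$ for the $k$-th spike block, let $\calP_i$ be the $a_i$-partition with $a_i$-side $P_i = \{a_i, u\} \cup \bigcup_{k < i} B_k$, so that $\overline{P_j} = \{a_j^{-1}, u^{-1}\} \cup \bigcup_{k > j} B_k$. A short computation gives $P_i \cap \overline{P_j} = \emptyset$ whenever $i \leq j$, so $\{\calP_1, \ldots, \calP_d\}$ is pairwise compatible, yielding $M(\{a_1, \ldots, a_d\}) \geq d$. To push this up to $2d - 1$, I would adjoin the $d - 1$ $u$-partitions $\calU_\ell$ (for $\ell = 1, \ldots, d - 1$) with $u$-side $U_\ell = \{u\} \cup \bigcup_{k \leq \ell} B_k$; these form a chain among themselves, and each is compatible with every $\calP_i$ either via $U_\ell \cap \overline{P_i} = \emptyset$ (when $\ell < i$) or via $\overline{U_\ell} \cap P_i = \emptyset$ (when $\ell \geq i$).

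The main obstacle will be the matching upper bounds. For $M(\{a_1, \ldots, a_d\}) \leq d$, given a pairwise compatible family $\Pi$, I would split it by base: members sharing a base $a_i$ cannot be adjacent so their sides form a chain, and members with distinct bases $a_i, a_j$ are also not adjacent (since $a_i \notin \link{a_j}$), forcing compatibility through empty intersections of sides. The block structure of $\Gamma \setminus \str{a_i}$ (with $\{u\}$ as one component and each $B_k$, $k \neq i$, as another) then imposes strong containment constraints among the various $P$'s, and a careful combinatorial argument should cap $|\Pi|$ at $d$. The bound $M(\{u, a_1, \ldots, a_d\}) \leq 2d - 1$ would follow by extending the analysis to $u$-partitions, which correspond to cuts of $\{1, \ldots, d\}$, and showing at most $d - 1$ such cuts can coexist with a maximal family of $a_i$-partitions. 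These upper bound arguments are the technical heart of the proof; the $v$-computation and the explicit lower bound constructions are essentially routine.
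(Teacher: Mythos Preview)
Your proposal does not address the stated lemma at all. The lemma asserts that if $d_\Gamma(u,v)\neq 2$ for non-equivalent vertices, then any $u$-partition is compatible with any $v$-partition; the paper does not prove this but simply cites it from \cite{MillardVogtmann2019}. What you have written is instead a proof sketch for \emph{Proposition~\ref{prop:M(L), M(V) for rake graphs}}, the computation of $M(L)$ and $M(V)$ for the $d$-rake.

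Taking your proposal as aimed at that proposition, your approach is broadly the same as the paper's for $M(L)$: same identification of relevant and principal vertices, same use of the lemma to split off the $v$-contribution, same chain argument for $M(v)=2d-1$, and essentially the same nested construction for $M(a_1,\dots,a_d)\geq d$. The paper's upper bound $M(a_1,\dots,a_d)\leq d$ is obtained by showing that in any compatible family the chosen sides are totally ordered by strict inclusion with a jump of at least one component at each step, which is the ``careful combinatorial argument'' you allude to but do not carry out.

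Where you genuinely diverge is in the computation of $M(V)$. You propose to establish $M(\{u,a_1,\dots,a_d\})=2d-1$ directly, which requires a new upper-bound argument analysing how $u$-partitions interact with a maximal family of $a_i$-partitions. The paper avoids this entirely: it uses the trivial inequality $M(V)\leq M(L)+M(u)$, computes $M(u)=d-1$ from $|\calC(u)|=d$, and then exhibits an explicit compatible set of size $4d-2$. This is considerably simpler than your route, since no interaction analysis between $u$- and $a_i$-partitions is needed for the upper bound.
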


\begin{proposition}\label{prop:M(L), M(V) for rake graphs}
    For~$\Gamma = T_d$, we have~$M(L) = 3d-1$ and~$M(V) = 4d-2$.
\end{proposition}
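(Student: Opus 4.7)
The plan is to separate the contributions of the three types of admissible base: $v$, the $a_i$'s, and $u$. First I would classify the vertices of $T_d$: the principal ones are $v, a_1, \ldots, a_d$ and the non-principal ones are $u, b_1, \ldots, b_d$. Moreover, no valid $\Gamma$-partition can be based at any $b_i$, because $T_d \setminus \str{b_i}$ is a single connected component, which would force one side of any such partition to be a singleton (violating thickness); so only $u, v, a_1, \ldots, a_d$ can serve as bases.

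Next I would compute $M(v)$ and $M(u)$ by a chain argument. Each $\{b_i\}$ is an isolated component of $T_d \setminus \str{v}$, so a $v$-partition is determined by independently assigning each of the $2d$ elements of $\{b_i, b_i^{-1}\}_{i=1}^{d}$ to one of the two sides, and two such partitions are compatible iff their $v$-sides are nested. The longest chain of proper nonempty subsets of a $2d$-element set has length $2d-1$, so $M(v) = 2d-1$. Analogously a $u$-partition is encoded by a proper nonempty subset $S \subseteq \{1, \ldots, d\}$ (the branches $\{a_j, b_j\}$ bundled with $u$, since each $\{a_j, b_j\}$ is a component of $T_d \setminus \str{u}$), and compatibility is again subset-nesting, giving $M(u) = d-1$.

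The central step is to show $M(\{a_1, \ldots, a_d\}) = d$. For the lower bound, write $C_j := \{a_j, a_j^{-1}, b_j, b_j^{-1}\}$ and for each $i$ take the $a_i$-partition $\calP_i$ with $P_i = \{a_i, u\} \cup \bigcup_{j < i} C_j$; then $P_1 \subsetneq P_2 \subsetneq \cdots \subsetneq P_d$ is immediate, so this is a pairwise-compatible family. The matching upper bound is the main obstacle I expect: since $d_\Gamma(a_i, a_j) = 2$ for $i \neq j$, \textit{Lemma \ref{lem:distance >= 2 implies containment of sides}} forces any two compatible $a_i$- and $a_j$-partitions to have a side of one contained in a side of the other. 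The fact that for $i \neq j$ every branch $\{a_j, b_j\}$ is bundled together on one side of any $a_i$-partition translates this into a nesting condition on the branch-distribution sets $\sigma(\calP) := \{k : a_k \in P\} \subseteq \{1, \ldots, d\}$, and a careful case analysis of the auxiliary placements of $u, u^{-1}$ then caps a pairwise-compatible family at exactly $d$ partitions.

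Finally I would assemble the count. Since $d_\Gamma(v, x) = 1$ for every admissible base $x \neq v$, \textit{Lemma \ref{lem:M(u)+M(v)=M(u, v)}} lets $v$-partitions combine freely with all others, so $M(L) = M(v) + M(\{a_1, \ldots, a_d\}) = (2d-1) + d = 3d - 1$. For $M(V)$, splitting any compatible family into its $u$-part and $a$-part gives the upper bound $M(\{u, a_1, \ldots, a_d\}) \leq M(u) + M(\{a_1, \ldots, a_d\}) = (d-1) + d = 2d - 1$; the matching lower bound is obtained by adjoining to the $\calP_i$ the $u$-partitions $\calQ_k$ with $Q_k = \{u\} \cup \bigcup_{j \leq k} C_j$ for $k = 1, \ldots, d - 1$, and checking by direct inspection (if $i \leq k$ then $P_i \subseteq Q_k$, else $Q_k \subseteq P_i$; and $Q_k \subseteq Q_{k'}$ for $k \leq k'$) that the combined family of $2d - 1$ partitions is pairwise compatible. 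Hence $M(V) = M(v) + (2d - 1) = 4d - 2$.
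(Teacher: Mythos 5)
Your proposal is correct and follows essentially the same route as the paper: rule out the~$b_i$ as bases, compute~$M(v)=2d-1$ and~$M(u)=d-1$ via chains of nested sides, show~$M(\{a_1,\dots,a_d\})=d$, and assemble the totals using \emph{Lemma \ref{lem:M(u)+M(v)=M(u, v)}} since every admissible base commutes with~$v$. The one step you defer --- the upper bound~$M(\{a_1,\dots,a_d\})\leq d$ --- is closed in the paper by noting that along the forced chain of nested sides the number of elements of~$\calC(a_i)$ contained in the smaller side strictly increases (this is precisely what your ``placements of~$u, u^{-1}$'' must account for, since the branch-distribution sets~$\sigma$ alone need not strictly increase), which caps the chain at~$d$.
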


\begin{proof}
    The principal vertices of~$T_d$ are~$L = \{v, a_1, \dots, a_d\}$, and we have~$u <_\circ a_i$ for each~$i$. Observe that the vertices are all pairwise non-equivalent; in particular, each~$T_d$-partition has exactly one possible base vertex.
    
    No~$b_i$ can be a base for a partition, as~$T_d^\pm \setminus \str{b_i}^\pm$ is a single connected component. Hence every vertex that can be a base of a partition commutes with~$v$. Thus by \textit{Lemma \ref{lem:M(u)+M(v)=M(u, v)}}, we have 
    \[M(L) = M(v) + M(a_1, \dots, a_d).\]

    Since~$T_d^\pm \setminus \str{v}^\pm$ consists only of the~$b_i^\pm$ and no edges, we have~$M(v) = 2d-1$. Indeed, the~$v$-sides of a compatible set of partitions all based at~$v$ are totally ordered by inclusion. The smallest possible~$v$-side consists of two vertices:~$v$ and one of~$\{b_1, \overline{b_1}, \dots, b_d, \overline{b_d}\}$; the largest possible consists of~$2d$ vertices:~$v$ along with all but one of~$\{b_1, \overline{b_1}, \dots, b_d, \overline{b_d}\}$. Hence~$M(v) \leq 2d-1$. Equality can be realised, for example by the following set of~$T_d$-partitions (given by their~$v$-sides):
    \begin{equation}\label{eqn:list of (2d-1) v-partitions}
        \{v, b_1\},\; \{v, b_1^\pm\},\; \{v, b_1^\pm, b_2\},\; \dots,\; \{v, b_1^\pm, \dots, b_{d-1}^\pm, b_d\}.
        \tag{$\star$}
    \end{equation}

    We now prove that~$M(a_1, \dots, a_d) = d$. Note that partitions based at~$a_i$ have~$\vert \calC(a_i) \vert = d+1$: we have the components~$\{u\}, \{\overline{u}\}$, and~$S_j \coloneqq \{a_j^\pm, b_j^\pm\}$ for~$j \neq i$. Hence~$M(a_i) \leq d$ for each~$i$. Let~$\calP_i$ be based at~$a_i$ and~$\calP_j$ based at~$a_j$,~$i \neq j$. Then, since~$[a_i, a_j] \neq 1$, we must have a choice of sides~$P_i^\times, P_j^\times$ such that~$P_i^\times \cap P_j^\times = \emptyset$.~$\calP_i$ splits~$a_i$, so~$P_j^\times$ must not contain the connected component~$S_i \in \calC(a_j)$. Hence~$\overline{P_j^\times}$ contains this connected component. Similarly,~$P_i^\times$ does not contain the connected component~$S_j \in \calC(a_i)$. Now, every element of~$\calC(a_i)$ which is contained in~$P_i^\times$ is contained in~$\overline{P_j^\times}$, as otherwise we could not have~$P_i^\times \cap P_j^\times = \emptyset$. Hence~$P_i^\times \subsetneq \overline{P_j^\times}$. Moreover,~$\overline{P_j^\times}$ contains at least one more element of~$\calC(a_j)$ than the number of elements of~$\calC(a_i)$ contained in~$P_i^\times$. Hence, any set of pairwise compatible partitions all based at elements of~$\{a_1, \dots, a_d\}$ form a chain of sides totally ordered by inclusion, and this chain can be of length at most~$d$, so~$M(a_1, \dots, a_d) \leq d$. One can reapply this argument to show that~$M(W) \leq d$, for any subset~$W \subseteq \{a_1, \dots, a_d\}$. Equality can also be realised here. For example, the following is a set of size~$d$ of partitions, one based at~$a_i$ for each~$i = 1, \dots, d$:
    \begin{equation}\label{eqn:list of d a_i-partitions}
        \{a_1, u\},\;\; \{a_2, u, S_1\},\;\; \dots\;\;, \{a_d, u, S_1, \dots, S_{d-1}\}.
        \tag{$\star \star$}
    \end{equation}
     
    Since~$M(L) = M(v) + M(a_1, \dots, a_d)$, we now have~$M(L) = 2d-1 + d = 3d-1$, as required.

    The only non-principal vertex at which partitions can be based is~$u$. Any set of pairwise compatible partitions, all based at~$u$, must have their~$u$-sides totally ordered by inclusion. The only elements of~$\calC(u)$ are~$S_1, \dots, S_d$; since there are~$d$ of these we therefore have~$M(u) = d-1$, using the same reasoning as for~$v$ above.

    Hence~$M(V) \leq M(L) + M(u) = (3d-1) + (d-1) = 4d-2$. The following set of pairwise compatible partitions, all based at~$u$:
    \[\{u, S_1\},\;\; \dots,\;\; \{u, S_1, \dots, S_{d-1}\},\]
    together with the sets given in \eqref{eqn:list of (2d-1) v-partitions} and \eqref{eqn:list of d a_i-partitions} gives a set of pairwise compatible~$T_d$-partitions which realises the equality~$M(V) = 4d-2$.
\end{proof}

\subsection{Arbitrarily large gaps}\label{subsec:arbitrarily large gaps}

Applying the work of Millard--Vogtmann \cite{MillardVogtmannCubeComplexes21} to the calculations from the previous subsection, we have the bounds
\[3d-1 = M(L) \leq \textsc{vcd}(U(A_{T_d})) \leq M(V) = \dim(K_{T_d}) = 4d-2.\]
In fact, the rake graphs are all barbed, so (\cite{MillardVogtmannCubeComplexes21}, \textit{Theorem 7.8}) yields the tighter upper bound of~$4d-3$. In particular, we have~$\textsc{vcd}(U(A_{T_2})) = 5$. 

For~$\Gamma$ a tree, Bux--Charney--Vogtmann \cite{BuxCharneyVogtmann2dimRAAGs09} prove that 
\[\textsc{vcd}(\oag) = e + 2\ell - 3,\]
where~$e$ is the number of edges and~$\ell$ is the number of leaves. As noted in (\cite{CharneyStambaughVogtmannUntwistedOuterSpace17}, \S5.2), since the normal subgroup of twists~$T(\raag) \leq \oag$ is free abelian of rank~$\ell$ and since the intersection~$T(\raag) \cap \uag = \{1\}$, it is natural to hope that~$\vcduag = e + \ell - 3$. For the~$d$-rake, this number is~$(2d+1) + (d+1) - 3 = 3d-1 = M(L)$. 

It is easy to see that in addition to being barbed, the rake graphs are also spiky. We can now apply \emph{Corollary \ref{cor:conditions 1 & 2 + barbed implies vcd = M(L)}} to immediately obtain:

\begin{corollary}\label{cor:arbitrarily large gaps}
   ~$\textsc{vcd}(U(A_{T_d})) = 3d-1$, and there exists a~$U(A_{T_d})$-complex of this dimension. In particular, there exist graphs~$\Gamma$ for which:
    \begin{enumerate}
        \item the difference between~$\dim(\res)$ and~$\dim(\spine)$ is arbitrarily large;
        \item the difference between~$\vcduag$ and~$\dim(\spine)$ is arbitrarily large.
    \end{enumerate}
\end{corollary}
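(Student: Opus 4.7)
The plan is to combine the explicit values of $M(L)$ and $M(V)$ for $T_d$ from Proposition~\ref{prop:M(L), M(V) for rake graphs} with Corollary~\ref{cor:conditions 1 & 2 + barbed implies vcd = M(L)} (Theorem A). To invoke the latter, I first need to verify that each rake graph $T_d$ is both spiky and barbed.

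For barbedness, I would enumerate the non-principal vertices of $T_d$: the leaf $u$ (with $u <_\circ a_i$ for every $i$, since $\link{u}=\{v\}\subsetneq\{v,b_i\}=\link{a_i}$) and each leaf $b_i$ (with $b_i<_\circ v$, since $\link{b_i}=\{a_i\}\subsetneq \link{v}=\{u,a_1,\dots,a_d\}$). Now I would check the barbed condition on each: the distance-$2$ vertices from $u$ are precisely $a_1,\dots,a_d$, all of which strictly dominate $u$; the only distance-$2$ vertex from $b_i$ is $v$, which strictly dominates $b_i$. Hence $T_d$ is barbed.

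For spikiness, I would observe that the only non-principal vertex that can serve as the base of any $T_d$-partition is $u$, since each $b_i$ has $T_d^\pm\setminus\str{b_i}^\pm$ connected. So both Conditions 1 and 2 need only be checked for $u$. Condition 1 is vacuous because the distance-$2$ neighbours of $u$ are the \emph{principal} vertices $a_i$, leaving no non-principal $u'$ at distance $2$ to cause trouble. Condition 2 requires that any two distinct principal vertices $m,n >_\circ u$ fail to commute; the vertices strictly dominating $u$ are exactly $a_1,\dots,a_d$, and no two of these are adjacent in $T_d$. Hence $T_d$ is spiky.

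Having established both hypotheses, Corollary~\ref{cor:conditions 1 & 2 + barbed implies vcd = M(L)} gives $\textsc{vcd}(U(A_{T_d}))=M(L)=3d-1$, with a $U(A_{T_d})$-complex realising this dimension: namely the retract $\widehat{K_{T_d}}$, whose dimension is forced to equal $M(L)$ since properness and cocompactness of the action yield $\dim(\widehat{K_{T_d}})\geq\textsc{vcd}(U(A_{T_d}))=M(L)$, while Lemma~\ref{lem:barbed implies oversize contains hugged} gives $\dim(\widehat{K_{T_d}})\leq M(L)$. Subtracting,
\[\dim(K_{T_d})-\textsc{vcd}(U(A_{T_d}))=M(V)-M(L)=(4d-2)-(3d-1)=d-1,\]
and the same arithmetic controls $\dim(K_{T_d})-\dim(\widehat{K_{T_d}})$. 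Letting $d\to\infty$ proves both numbered claims.

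There is no genuine obstacle here: once Theorem A and Proposition E are in hand, the corollary is a direct substitution. The only point requiring a little care is the spikiness verification, where one must remember that Conditions 1 and 2 are restricted to non-principal vertices that can \emph{base} a partition, so the non-relevant leaves $b_i$ are excluded at that step even though they do have to be treated when verifying barbedness.
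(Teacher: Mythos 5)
Your proposal is correct and follows the paper's own route exactly: the paper likewise deduces the corollary by combining Proposition~\ref{prop:M(L), M(V) for rake graphs} with Corollary~\ref{cor:conditions 1 & 2 + barbed implies vcd = M(L)}, merely asserting that the rake graphs are barbed and spiky as ``easy to see''. Your explicit verification of barbedness and spikiness (including the observation that only the relevant non-principal vertex $u$ matters for Conditions 1 and 2, and that the checks are vacuous or immediate) correctly fills in that assertion, and the final arithmetic matches the paper.
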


\begin{remark}\label{rem:rake derivatives}
    There is a more general class of \emph{rake-like} graphs to which our results still apply. Let~$\Gamma'$ be any graph with no non-principal vertices. For example,~$\Gamma'$ could be a complete graph, or a cycle, or any graph where vertices are never distance 2 apart. One can replace~$v$ in~$T_d$ with~$\Gamma'$, adding edges so that every vertex of~$\Gamma'$ is adjacent to~$u$ and each of the~$a_i$. This makes a derivative \emph{rake-like} graph~$T_d(\Gamma')$. The set of non-principal vertices of~$T_d(\Gamma')$ is~$\{u, b_1, \dots, b_d\}$, and once again~$u$ is the only relevant non-principal vertex. One may check that~$T_d(\Gamma')$ is barbed and spiky. Hence~$\textsc{vcd}(U(A_{T_d(\Gamma')}))$ is equal to the principal rank of~$T_d(\Gamma')$. As with rake graphs, all rake-like graphs exhibit a growing gap between~$\textsc{vcd}(U(A_{T_d(\Gamma')}))$ and~$\dim(K_{T_d(\Gamma')})$ as~$d$ increases.
\end{remark}

\section{Application: On a sufficient condition for~$M(L) = M(V)$}\label{sec:sufficient condition for M(L) = M(V)}

In this section, we consider the following graph~$\Delta$. 

\begin{figure}[H]
    \begin{center}
        \begin{tikzpicture}
            \fill (-1, 0) circle (2pt); %u1
            \fill (-1, -1) circle (2pt); %u2
            \fill (0, 0) circle (2pt); %v1
            \fill (0, -1) circle (2pt); %v2
            \fill (0.9, 0.5) circle (2pt); %a1
            \fill (0.9, -0.5) circle (2pt); %a2
            \fill (0.9, -1.5) circle (2pt); %a3
            \fill (1.9, 0.5) circle (2pt); %b1
            \fill (1.9, -0.5) circle (2pt); %b2
            \fill (1.9, -1.5) circle (2pt); %b3
            \draw[black, thick] (-1, 0) -- (0, 0); %u1 to v1
            \draw[black, thick] (-1, -1) -- (0, -1); %u2 to v2
            \draw[black, thick] (0, 0) -- (0.9, 0.5); %v1 to a1
            \draw[black, thick] (0, 0) -- (0.9, -0.5); %v1 to a2
            \draw[black, thick] (0, -1) -- (0.9, -0.5); %v2 to a2
            \draw[black, thick] (0, -1) -- (0.9, -1.5); %v2 to a3
            \draw[black, thick] (0.9, 0.5) -- (1.9, 0.5); %a1 to b1
            \draw[black, thick] (0.9, -0.5) -- (1.9, -0.5); %a2 to b2
            \draw[black, thick] (0.9, -1.5) -- (1.9, -1.5); %a3 to b3
            \node at (-1, 0) [above = 2pt, black, thick]{$u_1$};
            \node at (0, 0) [above = 2pt, black, thick]{$v_1$};
            \node at (-1, -1) [above = 2pt, black, thick]{$u_2$};
            \node at (0, -1) [above = 2pt, black, thick]{$v_2$};
            \node at (0.9, 0.5) [above = 2pt, black, thick]{$a_1$};
            \node at (0.9, -0.5) [below = 2pt, black, thick]{$a_2$};
            \node at (0.9, -1.5) [below = 2pt, black, thick]{$a_3$};
            \node at (1.9, 0.5) [above = 2pt, black, thick]{$b_1$};
            \node at (1.9, -0.5) [below = 0pt, black, thick]{$b_2$};
            \node at (1.9, -1.5) [below = 0pt, black, thick]{$b_3$};
        \end{tikzpicture}
    \end{center}
    \caption{The graph~$\Delta$.}
    \label{fig:delta}
\end{figure}
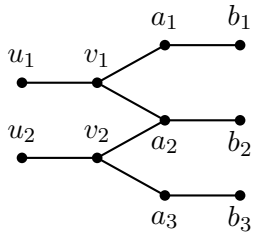

One can check that~$\Delta$ is barbed and satisfies Condition 1, but violates Condition 2, so is not spiky. In particular, one of our key lemmas, \emph{Lemma \ref{lem:condition 2 (NEW)}}, cannot be applied directly to~$\Delta$. However, we will prove using ad-hoc methods that we can still apply the conclusion of \emph{Lemma \ref{lem:condition 2 (NEW)}} to~$\Delta$, and hence the conclusion of our main result, \emph{Corollary \ref{cor:conditions 1 & 2 + barbed implies vcd = M(L)}}. Therefore,~$\Delta$ signifies the potential for further work, as it illustrates the failure of the definition of `hugging' to truly capture the notion of `redundancy' that we use in our retraction. In this paper, we will content ourselves with applying our main theorem to~$\Delta$ in order to investigate a potential sufficient condition for guaranteeing that~$M(L) = M(V)$, as we do in the next subsection.

\subsection{A sufficient condition for~$M(L) = M(V)$}\label{subsec:a suff cond for M(L) = M(V)}

Given a graph~$\Gamma$, Millard--Vogtmann provide a sufficient condition on~$\Gamma$ which ensures~$M(L) = M(V)$:
\begin{theorem}[\cite{MillardVogtmannCubeComplexes21}, \textit{Theorem 5.2}]\label{thm:sufficient condition for M(L) = M(V)}
    Suppose that each non-principal vertex~$u \in V(\Gamma)$ satisfies the following condition:

    \begin{equation}\label{eqn:sufficient condition for M(L) = M(V)}
        \text{all principal maximal~$m >_\circ u$ are in the same component of~$\Gamma \setminus \str{u}$}.
        \tag{$\dagger$}
    \end{equation}

    Then~$M(L) = M(V)$, so in particular~$\vcduag = \dim(\spine)$.
\end{theorem}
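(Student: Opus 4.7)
The plan is to show that any compatible set $\Pi$ with $|\Pi| = M(V)$ can be transformed into a compatible set of the same size in which every partition is based at a principal vertex. Since the inequality $M(L) \leq M(V)$ is trivial, producing such a set establishes $M(L) = M(V)$. The transformation proceeds by replacing non-principal partitions one at a time, each time preserving both the size and the pairwise compatibility of $\Pi$.

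First I would fix a non-principal partition $\calQ \in \Pi$ based at a non-principal vertex $u$. Since $u$ is non-principal, there exists a principal maximal vertex $m$ with $m >_\circ u$, and by the hypothesis \eqref{eqn:sufficient condition for M(L) = M(V)}, every such $m$ lies in a single connected component $C \in \calC(u)$. Without loss of generality, $C$ is contained in one side $Q$ of $\calQ$, so $\{m, m^{-1}\}$ sits inside $Q$ for every principal maximal $m$ dominating $u$. I would then construct a principal $\Gamma$-partition $\calP$ based at $m$ with $P \subseteq Q$. Because $m >_\circ u$, each element of $\calC(m)$ is either $\{u\}$, $\{u^{-1}\}$, a component of $\calC(u)$ distinct from $C$, or a component contained in $C$; the natural candidate is to take $P$ to be $\{m\}$ together with all elements of $\calC(m)$ sitting inside $Q$ (excluding $\{u^{-1}\}$), so that $P \subseteq Q$ and $\overline{Q} \subseteq \overline{P}$ by construction.

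The main technical step, and the expected principal obstacle, is verifying that the constructed $\calP$ remains compatible with every $\calR \in \Pi \setminus \{\calQ\}$. This I would handle by case analysis on the base $v$ of $\calR$. If $v$ commutes with $m$ then $\calR$ and $\calP$ are adjacent and there is nothing to show. If $d_\Gamma(v,m) \geq 2$, then \emph{Lemma \ref{lem:distance >= 2 implies containment of sides}} applied to $\calR$ and $\calQ$ yields a side $R$ of $\calR$ contained in a side of $\calQ$; a routine check using $P \subseteq Q$ and $\overline{Q} \subseteq \overline{P}$ transports this containment to a containment of a side of $\calR$ in a side of $\calP$. The delicate case is $d_\Gamma(v,u) = 2$ with $v \in C$; here one must use that $\calR$ is compatible with $\calQ$ and that all principal maximal dominators of $u$ lie inside $C$ to rule out $R$ straddling $\{m,m^{-1}\}$ incompatibly with $P$. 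The argument here echoes the structure of \emph{Lemma \ref{lem:Lemma A, pg 87}}, and is where the hypothesis \eqref{eqn:sufficient condition for M(L) = M(V)} does the real work, by preventing a second principal maximal dominator from sitting in a different component of $\calC(u)$ and thereby splitting the analysis.

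Having verified compatibility, the set $\Pi' = (\Pi \setminus \{\calQ\}) \cup \{\calP\}$ is a pairwise compatible set of size $M(V)$ with one fewer non-principal member. Iterating replaces every non-principal partition by a principal one, yielding a compatible set of size $M(V)$ in which every element can be based at a principal vertex, so $M(L) \geq M(V)$. The corollary $\vcduag = \dim(\spine)$ then follows from \emph{Corollary \ref{corollary a priori bounds on vcd}}.
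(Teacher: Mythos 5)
This statement is quoted from Millard--Vogtmann and is not proved in the paper, so there is no internal proof to compare against; the closest analogue is the proof of \emph{Lemma \ref{lem:barbed implies oversize contains hugged}}, which carries out exactly the kind of ``replace a non-principal partition by a principal one'' argument you propose, and the length and delicacy of that proof is a warning sign. Your overall strategy is the right one, but the step you dismiss as a ``routine check'' is the entire content of the theorem, and your specific candidate for $\calP$ fails.

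Concretely: take $\Gamma$ with vertices $u, x_1, x_2, v, m, w, z$ and edges $uv$, $x_1v$, $x_2v$, $mv$, $mw$, $wz$. One checks that $(\dagger)$ holds ($m$ is the unique principal maximal dominator of each of $u, x_1, x_2$, and it lies in the single component $C = \{m,w,z\}$ of $\Gamma \setminus \str{u}$ containing it). Let $\Pi$ contain the $u$-based partitions $\calQ$ and $\calR$ with sides $Q = \{u, x_1, C^\pm\}$ and $R = \{u, C^\pm\}$; these are compatible since $R \subseteq Q$. Your candidate replacement for $\calQ$ is $P = \{m\} \cup (\text{elements of } \calC(m) \text{ inside } Q) = \{m, u, x_1, z, z^{-1}\}$. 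But then all four intersections $R \cap P$, $R \cap \overline{P}$, $\overline{R}\cap P$, $\overline{R}\cap\overline{P}$ are non-empty (they contain $u$, $m^{-1}$, $x_1$, $u^{-1}$ respectively) and $\calP$, $\calR$ are not adjacent, so $\calP$ is incompatible with $\calR \in \Pi \setminus \{\calQ\}$. The problem is that $P$ must be built inside the \emph{innermost} relevant side, not inside $Q$; this is why \emph{Lemma \ref{lem:barbed implies oversize contains hugged}} first selects a non-principal side containing no other non-principal sides and then still has to revise the candidate repeatedly. Two further gaps: you never rule out that the constructed $\calP$ already lies in $\Pi$, in which case $(\Pi\setminus\{\calQ\})\cup\{\calP\}$ has size $M(V)-1$ and the induction on the number of non-principal members breaks (one standard fix is to take $\Pi$ extremal for the number of principal partitions among all maximal-size compatible sets and derive a contradiction, but this must be said and used); and you do not verify thickness of $\calP$ --- if $u^{-1} \in Q$ and no component of $\calC(m)$ other than $\{u^{-1}\}$ lies in $Q$, your $P$ is the singleton $\{m\}$, which is not a valid $\Gamma$-partition.
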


One reason for considering rake(-like) graphs is that they are the prototypical configurations which violate \eqref{eqn:sufficient condition for M(L) = M(V)}. Indeed,~$T_2$ is the graph with the fewest vertices which violates \eqref{eqn:sufficient condition for M(L) = M(V)}. 

The~$d$-rake violates~$\eqref{eqn:sufficient condition for M(L) = M(V)}$ in a rather controlled manner, and the gap between~$\textsc{vcd}(U(A_{T_d}))$ and~$\dim(K_{T_d})$ is predictable: there are precisely~$d$ components of~$T_d \setminus \str{u}$ containing a principal maximal~$m >_\circ u$, and the gap between~$\textsc{vcd}(U(A_{T_d})) = M(L)$ and~$\dim(K_{T_d}) = M(V)$ is precisely~$d-1$. One might therefore consider a statement along the lines of:
\begin{center}
    ``if, for every non-principal~$u$, there are at most~$k$ components \\ of~$\Gamma \setminus \str{u}$ containing a principal maximal~$m >_\circ u$, \\ then~$\dim(\spine) - \vcduag = k-1$''.
\end{center}

The graph~$\Delta$ provides a simple counterexample to this speculation.

The principal vertices of~$\Delta$ are~$L = \{a_1, a_2, a_3, v_1, v_2\}$. The relevant non-principal vertices are~$\{u_1, u_2, b_2\}$. We have~$u_1 <_\circ a_1, a_2$ and~$u_2 <_\circ a_2, a_3$ while~$b_2 <_\circ v_1, v_2$ and so~$\Delta$ satisfies the speculative condition above with~$k = 2$.

\begin{lemma}\label{lem:M(L) and M(V) for Delta}
    For~$\Delta$, we have~$M(L) = 11$ and~$M(V) = 14$.
\end{lemma}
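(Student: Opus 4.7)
The plan is to compute $M(L)$ and $M(V)$ by identifying the relevant vertices of $\Delta$, computing $M$ at each, and then carefully tracking compatibility between partitions based at vertices at $\Delta$-distance $2$.

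First, I would identify the relevant bases: the principal vertices $L=\{a_1,a_2,a_3,v_1,v_2\}$ together with the non-principal vertices $u_1,u_2,b_2$ (the leaves $b_1,b_3$ cannot serve as bases, since $\Delta\setminus\operatorname{st}(b_i)$ is connected for $i=1,3$). For each relevant $w$ I would enumerate the components of $\Delta^\pm\setminus\operatorname{st}(w)^\pm$ and apply the standard chain argument for same-base partitions, obtaining
\[
M(v_1)=M(v_2)=4,\quad M(a_1)=M(a_3)=2,\quad M(a_2)=5,\quad M(u_1)=M(u_2)=M(b_2)=1.
\]

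Next, I would invoke \emph{Lemma \ref{lem:MV19, 5.1}}: pairs of relevant vertices at $\Delta$-distance not equal to $2$ contribute independently to $M$. Tabulating distances shows that the graph on the relevant vertices whose edges are the distance-$2$ pairs has exactly two connected components, $\{v_1,v_2,b_2\}$ and $\{a_1,a_2,a_3,u_1,u_2\}$, giving
\[
M(L)=M(\{v_1,v_2\})+M(\{a_1,a_2,a_3\}),\quad M(V)=M(\{v_1,v_2,b_2\})+M(\{a_1,a_2,a_3,u_1,u_2\}).
\]

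The core of the proof is then to compute each of the four summands. In the block $\{v_1,v_2\}$, each $v_1$-partition carries a single large chunk $X=\{u_2^\pm,v_2^\pm,a_3^\pm,b_3^\pm\}$ (with a binary choice of side) together with small chunks from $\{b_1\},\{b_1^{-1}\},\{b_2\},\{b_2^{-1}\}$; symmetrically for $v_2$-partitions, with large chunk $X'$ and small chunks $\{b_3\},\{b_3^{-1}\},\{b_2\},\{b_2^{-1}\}$. Since $d(v_1,v_2)=2$, \emph{Lemma \ref{lem:distance >= 2 implies containment of sides}} forces one side of one partition to contain one side of the other, and splitting by the four placements of $(X,X')$ reduces the compatibility of $\mathcal{P}$ (based at $v_1$) and $\mathcal{Q}$ (based at $v_2$) to an explicit condition on the shared singletons $\{b_2\},\{b_2^{-1}\}$ (of the form $B\cap C=\emptyset$, or $B\cup C=\{b_2,b_2^{-1}\}$, or $B\subseteq C$, etc., where $B,C$ denote the $b_2$-content of the chosen sides). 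Combining these with the chain constraints on same-base partitions and optimising yields $M(\{v_1,v_2\})$, and adjoining the compatible $b_2$-partition gives $M(\{v_1,v_2,b_2\})$. The block $\{a_1,a_2,a_3\}$ is analysed by the same template, with $a_2$ central with six chunks, $a_1$ and $a_3$ each carrying a large chunk, and $d(a_1,a_3)=4$ meaning that $a_1$- and $a_3$-partitions are automatically compatible; adjoining $u_1,u_2$ contributes at most one partition each, subject to explicit compatibility with the chosen $a_i$-partitions.

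Summing the four block contributions yields $M(L)=11$ and $M(V)=14$; the lower bounds are witnessed by explicit compatible sets produced in the course of the case analysis, in the spirit of the lists $(\star),(\star\star)$ of \emph{Proposition \ref{prop:M(L), M(V) for rake graphs}}. The hard part will be the case analysis for the $\{v_1,v_2\}$ block: four distinct types of cross-compatibility must hold simultaneously with the two chain constraints, and obtaining the exact bound requires a careful audit of which combinations of large-chunk placements maximise the total. Once this bookkeeping is in hand, the middle block is handled by the same method with minor adaptations.
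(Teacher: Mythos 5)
Your framework matches the paper's: identify the relevant bases $\{v_1,v_2,a_1,a_2,a_3,u_1,u_2,b_2\}$, compute $M(\cdot)$ per vertex by counting components of $\Delta^\pm\setminus\operatorname{st}(\cdot)^\pm$, and split the set of relevant vertices into independent pieces using \emph{Lemma~\ref{lem:MV19, 5.1}} so that only the intra-block distance-$2$ interactions require a genuine argument. Your packaging of the split via the ``distance-$2$ graph'' --- giving $M(V)=M(\{v_1,v_2,b_2\})+M(\{a_1,a_2,a_3,u_1,u_2\})$ --- is a clean and legitimate reorganisation; the paper instead computes $M(L)=M(v_1,v_2)+M(a_1,a_2,a_3)=11$ first and then bounds $M(V)\le M(L)+M(u_1,u_2,b_2)=14$ by subadditivity, which needs less from Lemma~\ref{lem:MV19, 5.1} (only the easy all-singleton block $\{u_1,u_2,b_2\}$) but otherwise rests on the same sub-computations. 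Either route works, and both bottom out in the same two nontrivial facts.

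The genuine gap is that you never establish those two facts: $M(\{v_1,v_2\})=6$ and $M(\{a_1,a_2,a_3\})=5$. You correctly flag these as ``the hard part'' but only sketch what the case analysis would look like (placements of the large chunk $X$ and the $b_2$-singletons for the $v$-block; the $a_2$-centred chain argument for the $a$-block) without producing the bound. The same goes for the lower bounds: you gesture at witnesses ``in the spirit of'' Proposition~\ref{prop:M(L), M(V) for rake graphs} but do not exhibit compatible sets of sizes $11$ and $14$, which the paper does explicitly and which is where the claims $M(\{v_1,v_2,b_2\})\ge 7$ and $M(\{a_1,a_2,a_3,u_1,u_2\})\ge 7$ actually come from. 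A smaller issue: the phrase ``adjoining the compatible $b_2$-partition gives $M(\{v_1,v_2,b_2\})$'' is not automatic --- $b_2$ is at $\Delta$-distance $2$ from both $v_1$ and $v_2$, so the unique $b_2$-partition need not be compatible with every maximal chain of $v_i$-partitions; you must either verify compatibility with a specific chain or fall back on subadditivity ($\le 6+1$) plus an explicit witness, and the proposal does neither. Until the two block computations and the witness sets are written out, this is a plan rather than a proof.
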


\begin{proof}
Note that the vertices are pairwise non-equivalent, so each~$\Delta$-partition has only one possible base vertex.

The elements of~$\calC(v_1)$ are~$\{b_1\}, \{\overline{b_1}\}, \{b_2\}, \{\overline{b_2}\}, C \coloneqq \{u_2^\pm, v_2^\pm, a_3^\pm, b_3^\pm\}$. Since there are five components here, we have~$M(v_1) \leq 4$ (and we can realise equality here). The elements of~$\calC(v_2)$ are similar, and~$M(v_2) = 4$. Let~$\calP_1$,~$\calP_2$ be compatible partitions based at~$v_1$,~$v_2$ respectively. Then we have a choice of sides~$P_1^\times$,~$P_2^\times$ such that~$P_1^\times \cap P_2^\times = \emptyset$. Observe that~$P_1^\times$ cannot contain~$C$, since~$C \supset \{v_2^\pm\}$. Similarly,~$P_2^\times$ must not contain the element~$\{u_1^\pm, v_1^\pm, a_1^\pm, b_1^\pm\} \in \calC(v_2)$. If we have~$b_2 \in P_1^\times$, then~$P_2^\times$ must not contain~$b_2$; the same holds for~$\overline{b_2}$. Now, since in any compatible set, the partitions all based at~$v_1$ must have their~$v_1$-sides totally ordered by inclusion (and the same for~$v_2$), we have~$M(v_1, v_2) \leq 6$. Indeed, if we have four partitions based at~$v_1$, then we may only add in a further two partitions based at~$v_2$, both with one side only having components from~$\{\{b_3\}, \{\overline{b_3}\}, \{b_3^\pm\}\}$ (for example,~$\{v_2, b_3\}$ and~$\{v_2, b_3^\pm\}$). The same holds if we are to have four partitions based at~$v_2$. If we have three partitions based at~$v_1$, then we can add in at most three partitions based at~$v_2$: at least one of those based at~$v_1$ necessarily contains at least one of~$\{b_2, \overline{b_2}\}$ on the same side as at least one of~$\{b_3, \overline{b_3}\}$. Hence~$M(v_1, v_2) = 6$.

The elements of~$\calC(a_1)$ are~$\{u_1\}$,~$\{\overline{u_1}\}$, and~$C_{2, 3} \coloneqq \{a_2^\pm, b_2^\pm, v_2^\pm, u_2^\pm, a_3^\pm, b_3^\pm\}$. The elements of~$\calC(a_3)$ are similar, while those of~$\calC(a_2)$ are~$\{u_1\}$,~$\{\overline{u_2}\}$,~$\{u_2\}$,~$\{\overline{u_2}\}$,~$\{a_1^\pm, b_1^\pm\}$, and~$\{a_3^\pm, b_3^\pm\}$. Hence~$M(a_2) = 5$, while~$M(a_1) = M(a_3) = 2$. By \textit{Lemma \ref{lem:M(u)+M(v)=M(u, v)}}, since~$d_\Delta(a_1, a_3) \neq 2$, we can calculate~$M(a_1, a_3) = M(a_1) + M(a_3) = 2+2 = 4$. 

Consider any compatible set~$\Pi$ of partitions based in the set~$\{a_1, a_3\}$. We consider how many partitions based at~$a_2$ it is possible to add to~$\Pi$ to obtain a set of size~$M(a_1, a_2, a_3)$. If~$\Pi$ is empty, we can add at most five partitions based at~$a_2$, since~$M(a_2) = 5$. Assume~$\Pi \neq \emptyset$; without loss of generality let~$\calP_1 \in \Pi$ be based at~$a_1$. We know that this has one side containing one or both of the components~$\{u_1\}$,~$\{\overline{u_1}\}$. But these are both elements of~$\calC(a_2)$, so if there is a partition based at~$a_1$ with both of these components on the same side, then we can only add at most three partitions based at~$a_2$. Similarly, if there is no partition based at~$a_1$ with both of these components on the same side, then we may add at most four partitions based at~$a_2$. Now a symmetric argument with any partitions based at~$a_3$ further reduces the options for partitions based at~$a_2$ which can be added to~$\Pi$ to obtain a larger compatible set. If there is a partition based at~$a_3$ with both of the components~$\{u_3\}$,~$\{\overline{u_3}\}$ on the same side, then the number of partitions based at~$a_2$ we can add is reduced by another two. If there is only one partition based at~$a_3$ and the components~$\{u_3\}$ and~$\{\overline{u_3}\}$ are on different sides, then the number of partitions based at~$a_2$ which we can add is further reduced by only one. Notice that if~$\Pi$ contains two partitions based at~$a_1$ then one of them necessarily has~$\{u_1\}$ and~$\{\overline{u_1}\}$ on the same side; the same holds for~$a_3$. Hence~$M(a_1, a_2, a_3) \leq 5$ (and since~$M(a_2) = 5$, in fact we have equality).

Hence~$M(L) \leq M(v_1, v_2) + M(a_1, a_2, a_3) = 6 + 5 = 11$. We can attain this upper bound, as witnessed by the following compatible set of~$\Delta$-partitions, defined by the sides:
\begin{itemize}
    \item based at~$a_1$:~$\{a_1, u_1\}$;
    \item based at~$a_2$:~$\{a_2, u_1, a_1^\pm, b_1^\pm\}$,~$\{a_2, u_1^\pm, a_1^\pm, b_1^\pm\}$,~$\{a_2, u_1^\pm, a_1^\pm, b_1^\pm, u_2, a_3^\pm, b_3^\pm\}$;
    \item based at~$a_3$:~$\{a_3, u_2\}$;
    \item based at~$v_1$:~$\{v_1, b_1\}$,~$\{v_1, b_1^\pm\}$,~$\{v_1, b_1^\pm, b_2\}$;
    \item based at~$v_2$:~$\{v_2, b_3\}$,~$\{v_2, b_3^\pm\}$,~$\{v_2, b_3^\pm, \overline{b_2}\}$.
\end{itemize}

Now, the elements of~$\calC(u_1)$ are~$\{a_1^\pm, b_1^\pm\}$ and~$\{u_2^\pm, v_2^\pm, a_2^\pm, b_2^\pm, a_3^\pm, b_3^\pm\}$. Hence~$M(u_1) = 1$; we have~$M(u_2) = 1$ similarly. By \textit{Lemma \ref{lem:M(u)+M(v)=M(u, v)}},~$M(u_1, u_2) = M(u_1) + M(u_2) = 1 + 1 = 2$, since~$d_\Delta(u_1, u_2) \neq 2$. The components of~$\Delta^\pm \setminus \str{b_2}^\pm$ are~$\{u_1^\pm, v_1^\pm, a_1^\pm, b_1^\pm\}$ and~$\{u_2^\pm, v_2^\pm, a_3^\pm, b_3^\pm\}$; hence~$M(b_2) = 1$. Since~$d_\Delta(u_1, b_2) = d_\Delta(u_2, b_2) > 2$, we know that any partition based at~$b_2$ is compatible with any set of partitions based in~$\{u_1, u_2\}$. Hence~$M(u_1, u_2, b_2) = 3$, by \emph{Lemma \ref{lem:M(u)+M(v)=M(u, v)}}.

Therefore, we have~$M(V) \leq M(L) + M(u_1, u_2, b_2) = 11 + 3 = 14$. We have equality since we can add the following three~$\Delta$-partitions to the above list realising~$M(L)$:
\begin{itemize}
    \item based at~$u_1$:~$\{u_1, a_1^\pm, b_1^\pm\}$;
    \item based at~$u_2$:~$\{u_2, a_3^\pm, b_3^\pm\}$;
    \item based at~$b_2$:~$\{b_2, u_1^\pm, v_1^\pm, a_1^\pm, b_1^\pm\}$.
\end{itemize}
This completes the proof.
\end{proof}

\begin{corollary}\label{cor:counterexample to suff condition hoped-theorem}
    We have~$\dim(K_\Delta) - \textsc{vcd}(U(A_\Delta)) = 3$.
\end{corollary}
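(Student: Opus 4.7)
The computation reduces to showing $\textsc{vcd}(U(A_\Delta)) = M(L) = 11$: combined with $\dim(K_\Delta) = M(V) = 14$ from \emph{Lemma \ref{lem:M(L) and M(V) for Delta}}, this immediately yields the claimed difference of $3$. The lower bound $\textsc{vcd}(U(A_\Delta)) \geq 11$ is given by \emph{Theorem \ref{thm:Millard-Vogtmann M(L) lower bound}}, so the content lies in matching it with $\textsc{vcd}(U(A_\Delta)) \leq 11$. My plan is to adapt the proof of \emph{Corollary \ref{cor:conditions 1 & 2 + barbed implies vcd = M(L)}}: although $\Delta$ is not spiky (it satisfies Condition 1 but fails Condition 2), it is barbed, and an inspection of \S\ref{subsec:retraction process} shows that Condition 2 is used exclusively through \emph{Lemma \ref{lem:condition 2 (NEW)}}. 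If the conclusion of that lemma can be verified ad-hoc for $\Delta$, then \emph{Theorem \ref{thm:retraction process (D,(i))}} applies unchanged, producing a contractible cube complex $\res$ with a proper cocompact $U(A_\Delta)$-action, and \emph{Lemma \ref{lem:barbed implies oversize contains hugged}} ensures $\dim(\res) \leq M(L) = 11$.

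The verification itself proceeds by finite case analysis on the base vertices of the non-principal partitions $\calQ, \calQ'$ in the statement of \emph{Lemma \ref{lem:condition 2 (NEW)}}. The only relevant non-principal vertices of $\Delta$ are $u_1, u_2, b_2$, with principal dominators $\{a_1, a_2\}, \{a_2, a_3\}, \{v_1, v_2\}$ respectively. For each pairing of bases, one enumerates the possible hugging pairs $\{\calP_1, \calP_2\}$, $\{\calP_1', \calP_2'\}$ (classified by how the relevant $\calC^Q(m)$ is partitioned) and determines, via \emph{Lemma \ref{lem:distance >= 2 implies containment of sides}} and the specific adjacency pattern of $\Delta$, whether any principal $\calR$ can be simultaneously incompatible with both $\calQ$ and $\calQ'$ while compatible with all four of $\calP_1, \calP_2, \calP_1', \calP_2'$.

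The main obstacle is precisely this case analysis. The failure of Condition 2 for $\Delta$ comes from the commutations $[a_2, v_1] = [a_2, v_2] = 1$, which produce configurations where, with $m = a_2$ and $n \in \{v_1, v_2\}$, we have $n \not\sim m$ but $[m, n] = 1$; this disables Case I of the proof of \emph{Lemma \ref{lem:condition 2 (NEW)}}. In these configurations one must argue directly from the structure of the sides, exploiting the constraint that $P_1 \cup P_2$ and $\overline{P_1} \cap \overline{P_2}$ precisely control which components of $\Delta \setminus \str{u_i}$ lie on each side of $\calQ$, and similarly for $\calQ'$. Once the case analysis is completed, the retraction process gives $\dim(\res) \leq 11$, hence $\textsc{vcd}(U(A_\Delta)) \leq 11$; combined with the lower bound this yields $\textsc{vcd}(U(A_\Delta)) = 11$, and the difference $\dim(K_\Delta) - \textsc{vcd}(U(A_\Delta)) = 14 - 11 = 3$ follows.
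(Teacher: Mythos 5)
Your overall strategy coincides with the paper's: reduce to showing $\textsc{vcd}(U(A_\Delta)) = M(L) = 11$, note that $\Delta$ is barbed but fails Condition~2, observe that Condition~2 enters the machinery only through \emph{Lemma \ref{lem:condition 2 (NEW)}}, and verify the conclusion of that lemma ad-hoc for $\Delta$ so that the retraction process and \emph{Lemma \ref{lem:barbed implies oversize contains hugged}} apply unchanged. That is exactly the route the paper takes.

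The gap is that you stop where the actual mathematics begins: you describe the verification as a ``finite case analysis'' and call it ``the main obstacle,'' but you never carry it out, and that verification is the entire content of the proof. Concretely, the paper first observes that no principal vertex of $\Delta$ lies at distance $2$ from both $u_1$ and $b_2$ (nor from both $u_2$ and $b_2$), so by \emph{Lemma \ref{lem:MV19, 5.1}} no principal partition can be incompatible with a $u_i$-based and a $b_2$-based partition simultaneously; since $M(u_1)=M(u_2)=M(b_2)=1$, this collapses the case analysis to the single pairing where $\calQ$ is based at $u_1$ and $\calQ'$ at $u_2$. A principal $\calR$ incompatible with both must then be based at $a_2$, the unique vertex at distance $2$ from both $u_1$ and $u_2$. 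Incompatibility forces $\{u_1\}$ and $\{a_1^\pm, b_1^\pm\}$ onto opposite sides of $\calR$ (likewise for the $u_2$-data), which shows the hugged sides must be $\overline{Q}$ and $\overline{Q'}$, that the hugging partitions are based at $a_2$, and that some hugging partition $\calP$ with $P \subseteq \overline{Q}$ must satisfy $P \supseteq \{u_2, a_3^\pm, b_3^\pm\}$; both sides of $\calR$ then meet both sides of $\calP$, contradicting their compatibility. None of this appears in your write-up, so as it stands the proposal is a plan rather than a proof. A minor further point: your diagnosis that Condition~2 fails via $m = a_2$, $n \in \{v_1, v_2\}$ does not fit the definition, since the non-principal vertices $u$ with $a_2 >_\circ u$ are $u_1$ and $u_2$, and $d_\Delta(u_i, v_j) \in \{1, 3\}$, never $2$.
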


\begin{proof}
    We would like to apply our work to show that~$\textsc{vcd}(U(A_\Delta)) = M(L)$. However,~$\Delta$ violates Condition 2, so we cannot use \emph{Lemma \ref{lem:condition 2 (NEW)}}. It turns out that~$\Delta$ actually still satisfies the conclusion of \emph{Lemma \ref{lem:condition 2 (NEW)}}; we now prove this by hand. Since our work only relies on graphs satisfying Condition 2 so that \emph{Lemma \ref{lem:condition 2 (NEW)}} is applicable, this is sufficient to apply \emph{Corollary \ref{cor:conditions 1 & 2 + barbed implies vcd = M(L)}}.

    Let~$\Pi$ be a set of compatible~$\Delta$-partitions. Suppose that~$\Pi$ contains non-principal partitions~$\calQ$ and~$\calQ'$ which are hugged in~$\Pi$. We would like to show that if~$\calR$ is some principal partition which is compatible with all the principal partitions in~$\Pi$ (so in particular, with all partitions involved in hugging~$\calQ$ and~$\calQ'$), then~$\calR$ is compatible with at least one of~$\{\calQ, \calQ'\}$. 

    Note that there is no principal vertex in~$\Delta$ which is distance two from both~$u_1$ and~$b_2$. Hence, by \emph{Lemma \ref{lem:M(u)+M(v)=M(u, v)}}, there is no principal~$\Delta$-partition which is incompatible with a partition based at~$u_1$ and a partition based at~$b_2$ simultaneously. The same argument holds for the pair~$u_2$ and~$b_2$. Therefore (since~$M(u_1) = M(u_2) = M(b_2) = 1$), the only case left to consider is when~$\calQ$ is based at~$u_1$ and~$\calQ'$ is based at~$u_2$. 

    Without loss of generality, we may assume that~$\calQ$ has a side~$Q = \{u_1, a_1^\pm, b_1^\pm\}$ and~$\calQ'$ has a side~$Q' = \{u_2, a_3^\pm, b_3^\pm\}$. Suppose that~$\calR$ is incompatible with both~$\calQ$ and~$\calQ'$. This forces the base of~$\calR$ to be~$a_2$, as this is the unique vertex of~$\Delta$ which is distance two away from both~$u_1$ and~$u_2$. 

    Since~$\calR$ is incompatible with~$\calQ$, we know that no side of~$\calR$ may contain~$Q$. In particular, the elements~$\{u_1\}, \{a_1^\pm, b_1^\pm\}$ of~$\calC(a_2)$ must be on different sides of~$\calR$. This means that~$\calR$ is incompatible with the two partitions having a side of the form~$\{a_1^\times, u_1\}$. Hence~$Q$ is not the side of~$\calQ$ which is hugged. The same argument applies to~$Q'$: the elements~$\{u_2\}, \{a_3^\pm, b_3^\pm\}$ of~$\calC(a_2)$ must be on different sides of~$\calR$. 

    Therefore~$\overline{Q}$ and~$\overline{Q'}$ are both hugged. The only vertex which dominates~$u_1$ and is contained in~$\overline{Q}$ is~$a_2$ -- so the partition(s) which hug~$\overline{Q}$ are based at~$a_2$. In a hugging set for~$\overline{Q}$, there must be a partition~$\calP$ which has side~$P \subseteq \overline{Q}$ with~$u_2 \in P$. Since~$\{a_2^\pm\} \subseteq \overline{Q'}$, both sides of~$\calP$ must intersect~$\overline{Q'}$. Since~$u_2 \in Q' \cap P$, we are forced to have~$\overline{P} \cap Q' = \emptyset$. Hence~$P \supseteq \{u_2, a_3^\pm, b_3^\pm\}$. 

    We already said that the elements~$\{u_2\}, \{a_3^\pm, b_3^\pm\}$ of~$\calC(a_2)$ must be on different sides of~$\calR$. By the previous paragraph, both sides of~$\calR$ thus have non-empty intersection with~$P$. But the elements~$\{u_1\}, \{a_1^\pm, b_1^\pm\}$ of~$\calC(a_2)$ are also on different sides of~$\calR$ -- so both sides of~$\calR$ intersect~$\overline{P}$. Hence~$\calP$ is incompatible with~$\calR$, which is a contradiction. 

    Hence, even though~$\Delta$ violates Condition 2, it still satisfies the conclusion of \emph{Lemma \ref{lem:condition 2 (NEW)}}. Since~$\Delta$ is barbed, we can apply \emph{Corollary \ref{cor:conditions 1 & 2 + barbed implies vcd = M(L)}} to~$U(A_\Delta)$-equivariantly retract~$K_\Delta$ to a complex~$\widehat{K_\Delta}$ of dimension~$M(L) = 11$, concluding that~$\textsc{vcd}(U(A_\Delta)) = 11$. Hence \[\dim(K_\Delta) - \textsc{vcd}(U(A_\Delta)) = 14 - 11 = 3,\] as required. 
\end{proof}

\footnotesize
\bibliographystyle{alpha}
\bibliography{Bibliography_arXiv_v2}\medskip

\end{document}